\theoremstyle{plain}
\newtheorem{theorem}{Theorem}[section]
\newtheorem{lemma}[theorem]{Lemma}
\newtheorem{remark}[theorem]{Remark}
\newtheorem{proposition}[theorem]{Proposition}
\newtheorem{ex}[theorem]{Example}
\newtheorem{cor}[theorem]{Corollary}
\def\g{\mathfrak{g}}
\def\gpoly{\mathfrak{g}[t^{\pm1}]}
\def\glau{L{\mathfrak{g}}}
\def\n{\mathfrak{n}}
\def\h{\mathfrak{h}}
\def\Sl{\mathfrak{sl}}
\def\abel{\mathfrak{a}}
\def\abelp{\mathfrak{a}_+}
\def\abeln{\mathfrak{a}_-}
\def\Lie{\operatorname{Lie}}
\def\lgp{L\mathfrak{g}_+}
\def\lgn{L\mathfrak{g}_-}
\def\lG{LG}
\def\lGp{LG_+}
\def\lGn{LG_-}
\def\A{A}
\def\Ap{A_+}
\def\An{A_-}
\def\hom{LG_+/A_+}
\def\afD{\hat{\Delta}}
\def\afDp{\hat{\Delta}_{>0}}
\def\dcoset{G_+\backslash LG_+/A_+}
\def\liep{\mathcal{L}}
\def\witt{\mathrm{Witt}}
\def\afg{\widetilde{\mathfrak{g}}}
\def\afn{\widetilde{\mathfrak{n}}}
\def\afh{\widetilde{\mathfrak{h}}}
\def\dst{d_{\mathrm{st}}}
\def\dne{d_{\mathrm{ne}}}
\def\dch{d_{\chi}}
\def\dstz{d_{\mathrm{st}(0)}}
\def\dnez{d_{\mathrm{ne}(0)}}
\def\dchz{d_{\chi(0)}}
\def\ad{\operatorname{ad}}
\def\coad{\operatorname{ad^*}}
\def\pd{\partial}
\def\gr{\mathrm{gr}}
\def\deg{\mathrm{deg}}
\def\wt{\mathrm{wt}}
\def\Ker{\operatorname{Ker}}
\def\Im{\operatorname{Im}}
\def\exp{\operatorname{exp}}
\def\Der{\operatorname{Der}}
\def\Hom{\operatorname{Hom}}
\def\Vect{\operatorname{Vect}}
\def\U{\mathcal{U}}
\def\e{\operatorname{e}}
\def\C{\mathbb{C}}
\def\Z{\mathbb{Z}}
\def\W{\mathcal{W}}
\def\lam{\lambda}
\def\vp{\varphi}
\def\ep{\epsilon}
\def\al{\alpha}
\def\bt{\beta}
\def\gam{\gamma}
\def\bal{\bar{\alpha}}
\def\bbt{\bar{\beta}}
\def\dim{\mathrm{dim}\ }
\def\half{{1/2}}
\def\inv{(-|-)}
\def\dr{\mathrm{dR}}
\def\cl{\mathrm{cl}}
\def\L{\mathbb{L}_0^k}
\def\k{\kappa}
\def\tw{\tilde{w}}
\newcommand{\dynkinGtwo}
{\begin{picture}(10,15)
  \put(-10,3){\circle{4}}
  \put(15,3){\circle{4}}
  \put(-10,1.3){\line(1,0){25}}
  \put(-9,3){\line(1,0){22}}
  \put(-10,4.8){\line(1,0){25}}
  \put(7,3){\line(-2,1){10}}
  \put(7,3){\line(-2,-1){10}}
\end{picture}}
\newcommand{\dynkinFfour}
{\begin{picture}(20,15)
  \put(-20,2){\circle{4}}
  \put(0,2){\circle{4}}
  \put(20,2){\circle{4}}
  \put(40,2){\circle{4}}
  \put(-18,2){\line(1,0){16}}
  \put(2,3){\line(1,0){16}}
  \put(2,1){\line(1,0){16}}
  \put(22,2){\line(1,0){16}}
 \put(14,2){\line(-2,1){10}}
 \put(14,2){\line(-2,-1){10}}
\end{picture}}
\newcommand{\dynkinEsix}
{\begin{picture}(40,15)
  \put(-6,0){\circle{4}}
  \put(6,0){\circle{4}}
  \put(18,0){\circle{4}}
  \put(30,0){\circle{4}}
  \put(42,0){\circle{4}}
  \put(18,12){\circle{4}}
  \put(-4,0){\line(1,0){8}}
  \put(8,0){\line(1,0){8}}
  \put(20,0){\line(1,0){8}}
  \put(32,0){\line(1,0){8}}
  \put(18,2){\line(0,1){8}}
\end{picture}}
\newcommand{\dynkinEseven}
{\begin{picture}(70,15)
  \put(6,0){\circle{4}}
  \put(18,0){\circle{4}}
  \put(30,0){\circle{4}}
  \put(42,0){\circle{4}}
  \put(54,0){\circle{4}}
  \put(66,0){\circle{4}}
  \put(30,12){\circle{4}}
  \put(8,0){\line(1,0){8}}
  \put(20,0){\line(1,0){8}}
  \put(32,0){\line(1,0){8}}
  \put(44,0){\line(1,0){8}}
  \put(56,0){\line(1,0){8}}
  \put(30,2){\line(0,1){8}}
\end{picture}}
\newcommand{\dynkinEeight}
{\begin{picture}(70,15)
  \put(0,0){\circle{4}}
  \put(12,0){\circle{4}}
  \put(24,0){\circle{4}}
  \put(36,0){\circle{4}}
  \put(48,0){\circle{4}}
  \put(60,0){\circle{4}}
  \put(72,0){\circle{4}}
  \put(24,12){\circle{4}}
  \put(2,0){\line(1,0){8}}
  \put(14,0){\line(1,0){8}}
  \put(26,0){\line(1,0){8}}
  \put(38,0){\line(1,0){8}}
  \put(50,0){\line(1,0){8}}
  \put(24,2){\line(0,1){8}}
  \put(62,0){\line(1,0){8}}
\end{picture}}
\newcommand{\wtGa}{2~~~~~~0}
\newcommand{\wtGb}{2~~~~~~2}
\newcommand{\wtFa}{0~~~2~~~0~~~0~~~}
\newcommand{\wtFb}{0~~~2~~~0~~~2~~~}
\newcommand{\wtFc}{2~~~2~~~0~~~2~~~}
\newcommand{\wtFd}{2~~~2~~~2~~~2~~~}
\newcommand{\wtEaa}{$2~~~~0~~~~\overset{\text{\normalsize 0}}{2}~~~~0~~~~2$}
\newcommand{\wtEab}{$2~~~~2~~~~\overset{\text{\normalsize 2}}{0}~~~~2~~~~2$}
\newcommand{\wtEac}{$2~~~~2~~~~\overset{\text{\normalsize 2}}{2}~~~~2~~~~2$}
\newcommand{\wtEba}{$2~~~~2~~~~\overset{\text{\normalsize 2}}{0}~~~~2~~~~2~~~~2$}
\newcommand{\wtEbb}{$2~~~~2~~~~\overset{\text{\normalsize 2}}{2}~~~~2~~~~2~~~~2$}
\newcommand{\wtEca}{$0~~~~0~~~~\overset{\text{\normalsize 0}}{2}~~~~0~~~~0~~~~2~~~~0$}
\newcommand{\wtEcb}{$2~~~~0~~~~\overset{\text{\normalsize 0}}{2}~~~~0~~~~2~~~~0~~~~2$}
\newcommand{\wtEcc}{$2~~~~2~~~~\overset{\text{\normalsize 2}}{0}~~~~2~~~~0~~~~2~~~~2$}
\newcommand{\wtEcd}{$2~~~~2~~~~\overset{\text{\normalsize 2}}{2}~~~~2~~~~2~~~~2~~~~2$}
\title[$\W$-algebras and integrable systems]{A geometric construction of integrable Hamiltonian hierarchies associated with the classical affine $\W$-algebras}
\author{Shigenori Nakatsuka}
\address{Graduate School of Mathematical Sciences, The University of Tokyo, 3-8-1 Komaba, Tokyo, Japan 153-8914}
\email{nakatuka@ms.u-tokyo.ac.jp}
\begin{document}

\maketitle

\begin{abstract}
A class of classical affine $\W$-algebras are shown to be isomorphic as differential algebras to the coordinate rings of double coset spaces of certain prounipotent proalgebraic groups. As an application, integrable Hamiltonian hierarchies associated with them are constructed geometrically, generalizing the corresponding result of Feigin-Frenkel and Enriquez-Frenkel for the principal cases.
\end{abstract}

\section{Introduction}
Since the discovery of the Drinfel'd-Sokolov hierarchy \cite{ds84}, a number of integrable systems have been constructed in the same spirits cf. \cite{bdghm93, dghm92, fhm93, fgm96, fgm95}. Those integrable systems are not of finite dimension, which are formulated by Poisson algebras, but of infinite dimension, reflecting classical field theory as their origin.

In \cite{bdk09}, Barakat, De Sole and Kac used \emph{Poisson vertex algebras}, which play a role of Poisson algebras for integrable systems as above, and introduced the notion of \emph{integrable Hamiltonian hierarchies} as a framework of integrability for Poisson vertex algebras. 
Since then De Sole, Kac, and Valeri e.g.\ \cite{dkv13, dkv15a, dkv16a, dkv16b, dkv16c} have studied systematically integrable Hamiltonian hierarchies associated with Poisson vertex algebras, called the {\em classical (affine) $\W$-algebras}, which are obtained as classical limit of vertex algebras, called the (affine) $\W$-algebras \cite{dk06,krw03}, see also \cite{dk13a, suh18}.

The $\W$-algebras are parametrized by a triple $(\g,f,k)$ consisting of a finite dimensional simple Lie algebra $\g$ over $\C$, a nonzero nilpotent element $f\in\g$, and a complex number $k\in\C$, called the \emph{level} and are denoted by $\W^k(\g, f)$. The classical $\W$-algebras are also parametrized by the same data and thus we denote them also by $\W^k(\g, f)$.
De Sole, Kac and Valeri recovered the earlier results on the integrable Hamiltonian hierarchies mentioned above in their study and obtained a culminating result in \cite{dkv16c} on the existence of integrable Hamiltonian hierarchies associated with $\W^k(\g, f)$, which states:
\emph{the classical affine $\W$-algebra $\W^k(\g, f)$ admits an integrable Hamiltonian hierarchy for any finite dimensional simple Lie algebra $\g$ of classical type, nonzero nilpotent element $f$, and $k\neq0$.}

Besides the algebraic theory of integrable Hamiltonian hierarchies mentioned above, there is a result of Feigin-Frenkel \cite{feiginfrenkel95,feiginfrenkel93} and Enriquez-Frenkel \cite{enriqfrenkel97} which constructs the Drinfel'd-Sokolov hierarchies by using a geometric realization of the classical affine $\W$-algebra $\W^1(\g,f_\mathrm{prin})$, where $f_\mathrm{prin}$ denotes the principal nilpotent element in $\mathfrak{g}$. More precisely, they proved that $\W^1(\g,f_{\mathrm{prin}})$ are isomorphic as differential algebras to the coordinate rings of certain double coset spaces of prounipotent proalgebraic groups and that the Drinfel'd-Sokolov hierarchies are induced from natural group actions on these spaces.

The aim of this paper is to generalize the those results to the classical $\W$-algebras $\W^k(\g, f)$ when $f$ satisfies certain properties (see the condition (F) in the below) and the level $k\in\C$ is generic. We note that the choice of $f$ is a special case of the so-called Type I in \cite{bdghm93, dghm92,fhm93}. The integrable Hamiltonian hierarchies obtained in the paper for the case $k=1$ coincide with special cases of those considered in \cite{bf01} by construction, and are special cases of the result \cite{dkv13}. 

Let $\g$ be a finite dimensional Lie algebra over $\C$, and $f\in \g$ be a non-zero nilpotent element. We fix an $\mathfrak{sl}_2$-triple $\{e,h=2x,f\}$ containing $f$. We denote by $\Gamma: \g=\oplus_{j=-d}^d\g_j$ the $\ad_{x}$-grading and by $\g=\h\oplus (\oplus_{\al\in\Delta}\g_\al)$ a root space decomposition of $\g$ which is homogeneous with respect to $\Gamma$. Then the root system $\Delta$ admits an induced grading $\Delta=\sqcup_{j=-d}^d \Delta_j$. Let $\Pi$ denote the subset of $\Delta_{>0}=\sqcup_{j>0} \Delta_j$ consisting of the elements indecomposable in $\Delta_{>0}$. Let $V^k(\g_0)$ denote the universal affine Poisson vertex algebra associated with the reductive Lie algebra $\g_0$ at level $k$ and $F(\g_\half)$ the $\beta\gamma$-system Poisson vertex algebra associated with the symplectic vector space $\g_\half$. Then the $\W$-algebra $\W^k(\g,f)$ is defined as a 0-th cohomology of the BRST complex $C^k(\g,f)$, see Section \ref{affW} for details.

For generic level $k$, the $\W$-algebras are realized as the joint kernel of certain screening operators \cite{gen17}. Our first result (Theorem \ref{main1}) is a Poisson vertex algebra analogue of this result. Namely, for generic $k\in\C$, we realize the classical affine $\W$-algebra $\W^k(\g,f)$ as the Poisson vertex subalgebra of $V^k(\g_0)\otimes F(\g_\half)$ invariant under the  derivations $Q_\al^W$, $(\al\in \Pi)$:
\begin{equation}\label{eq0}\W^k(\g,f) \cong \bigcap_{\al\in \Pi}\Ker\Bigl(Q_\al^W: V^k(\g_0)\otimes F(\g_\half)\rightarrow V^k(\g_0)\otimes F(\g_\half)\Bigr).
\end{equation}
\noindent See \eqref{eq;2} and  \eqref{eq;1} for the definition of $Q_\al^W$.

We now suppose that the pair $(\g,f)$ satisfies the following condition (F):
\begin{enumerate}
\item[(F1)] The grading $\Gamma$ is a $\Z$-grading.
\item[(F2)] There exists an element $y\in\g_d$ such that $s=f +y  t^{-1}\in\gpoly$ is semisimple.
\item[(F3)] The Lie subalgebra $\Ker(\ad_s)\subset\g[t^{\pm1}]$ is abelian and $\Im(\ad_s)\cap \gpoly_0=\g_0$.
\end{enumerate}
Here $\gpoly=\g \otimes \C[t,t^{-1}]$ and $\gpoly=\oplus_{j\in\Z}\gpoly_j$ is the $\Z$-grading given by $\deg(X t^n)= j +(d+1)n$, $(X\in \g_j)$. 
The nilpotent elements $f$ satisfying (F1)-(F2) are called Type I in the literature e.g.\ \cite{fhm93, bdghm93, dghm92}.
We consider the completion $\glau=\g\otimes \C((t))$. It has subalgebras $\lgp$, which is the completion of $\gpoly_{>0}$, and $\lgn=\gpoly_{\leq0}$. We have the corresponding closed subgroups $\lGp$, $\lGn$ of the loop group $\lG$ of $G$.

Let $\abel$ denote the completion of $\Ker(\ad_s)$ in $\glau$, and $\abel_{\pm}=\abel\cap\glau_{\pm}$. Let $A$, (resp.\ $A_{\pm}$) be the closed subgroup of $\lG$ corresponding to $\abel$, (resp.\ $\abel_\pm$). 

The right $A$-action $\lGn\backslash \lG/\Ap\times A\rightarrow \lGn\backslash \lG/\Ap$, induces a Lie algebra homomorphism $\abel\rightarrow \Der(\C[\lGn\backslash\lG/\Ap])$, and so that $\abel\rightarrow \Der(\C[\lGp/\Ap])$ since $\lGp/A_+\subset \lGn\backslash\lG/\Ap$ is an open subset. We denote this action by $a\mapsto a^R$. In particular, we obtain a differential algebra $(\C[\lGp/\Ap],s^R)$. On the other hand, the natural left $\lGp$-action $\lGp\times\lGp/\Ap\rightarrow \lGp/A_+$ induces a Lie algebra homomorphism  $\lgp\rightarrow \Der(\C[\lGp/\Ap])$, which we denote by $X\mapsto X^L$. 

Our second result (Theorem \ref{main2}) is the following geometric interpretation of \eqref{eq0} under the condition (F): there exists an isomorphism of differential algebras 
$$\Psi_k: V^k(\g_0)\rightarrow \C[\lGp/\Ap],$$
such that the derivation $Q_\al^W$ is identified with $X_\al^L$ for some element $X_\al\in \g_\al$, ($\al\in\Pi$).
Since such root vectors $X_\al$ generate a Lie algebra $\g_+=\oplus_{j>0}\g_j$, it implies that $\W^k(\g,f)$ is isomorphic to $\C[\dcoset]$ as a differential algebra (Corollary \ref{doublecoset}). Here $G_+$ is the closed subgroup of $\lGp$ corresponding to $\g_+$.

The action $\abel\rightarrow \Der(\C[\lGp/A_+])$ preserves the subalgebra $\C[\dcoset]$. Thus we obtain a space of mutually commutative derivations of $\W^\k(\g,f)$ as the image of $\abel_-$. We denote this space by $\mathcal{H}^\k(\g,f)$. Our third result (Theorem \ref{main3}) states that $\mathcal{H}^k(\g,f)$ is an integrable Hamiltonian hierarchy associated with $\W^k(\g,f)$.

The double coset spaces $\dcoset$ considered here are embedded into the abelianized Grassmanians $G[t^{-1}]\backslash G((t))/A_+$ as a Zariski open subset. The abelianized Grassmanians have been used to construct Drinfeld-Sokolov hierarchies geometrically \cite{bf01}. As pointed in \emph{loc.cit.}, such a construction implies a strong compatibility of integrable Hamiltonian systems associated with  classical affine $\W$-algebras and Hitchin systems. The author hope to investigate the relationship between classical affine $\W$-algebras and Hitchin systems in future works.

\vspace{3mm}

{\it Acknowledgements}\quad
This paper is the master thesis of the author. He wishes to express his gratitude to Professor Atsushi Matsuo for encouragement throughout this work and numerous advices.
\section{Poisson vertex superalgebras}
\subsection{Poisson vertex superalgebra}
We recall here some basics about Poisson vertex superalgebras and their relation to the theory of integrable systems, following \cite{bdk09,suh18}. We remove the prefix ``super'' whenever we consider the non-super cases. 

A \emph{differential $\C$-superalgebra} is a pair $(V,\pd)$ consisting of a supercommutative $\C$-superalgebra $V$ and an even derivation $\pd$ on it. We denote by $\bar{a}$ the parity of For $a\in V$ and by $\Der(V)$ the set of super derivations of $V$.
A differential $\C$-superalgera of the form
$$V=\C[u_i^{(n)}| i\in I_{\bar{0}}, n\geq 0]\otimes \bigwedge\ _{\oplus_ {i\in I_{\bar{1}}}\C u_i^{(n)}}$$
as $\C$-superalgebras for some index set $I=I_{\bar{0}}\sqcup I_{\bar{1}}$ where $u_i^{(n)}=\pd^n u_i$, is called the \emph{superalgebra of differential polynomials} in the variables $u_i,\ (i\in I)$.

A \emph{Poisson vertex superalgebra} is a triple $(V,\pd,\{-_\lam-\})$ consisting of a differential $\C$-superalgebra $(V,\pd)$ and an even $\C$-bilinear map
\begin{equation}\label{lambda bracket}
\{-_\lam -\}:\ V\times V\rightarrow V[\lam],\quad (f,g)\mapsto \{f_\lam g\}=\sum_{n\geq0} \frac{\lam^n}{n!} f_{(n)}g,
\end{equation}
called the $\lambda$-bracket, satisfying
\begin{eqnarray}
&&\label{sesqui}\{ \pd f_\lam g\}=-\lam \{f_\lam g\},\quad \{f_\lam \pd g\}=(\lam+\pd)\{f_\lam g\},\\
&&\label{skew}\{g_\lam f\}=- (-1)^{\bar{f}\bar{g}}_\leftarrow\{f_{-\pd-\lam} g\},\\  
&&\label{Jacobi}\{f_\lam \{g_\mu h\}\}-(-1)^{\bar{f}\bar{g}}\{g_\mu \{f_\lam h\}\}=\{\{f_\lam g\}_{\lam+\mu} h\},\\
&&\label{rightLeibniz}\{f_\lam gh\}=\{f_\lam g\} h+(-1)^{\bar{g}\bar{h}}\{f_\lam h\} g,\\
&&\label{leftLeibniz}\{fg_\lam h\}=(-1)^{\bar{g}\bar{h}}\{f_{\lam+\pd} h\}_\rightarrow g+(-1)^{\bar{f}(\bar{g}+\bar{h})}\{g_{\lam+\pd}h\}_\rightarrow f,
\end{eqnarray}
for $f,g,h\in V$. Here we denote 
\begin{equation*}
\{f_\lam g\}_\rightarrow= \sum \frac{1}{n!} f_{(n)}g \lam^n,\quad _\leftarrow \{f_\lam g\}=\sum \lam^n\frac{1}{n!} f_{(n)}g.
\end{equation*}
\begin{remark}
\label{1.1}
Given an algebra of differential polynomials $V$ in the variables $u_i$, $(i\in I)$, a linear map $F: \operatorname{span}_\C \{u_i\}_{i\in I}\rightarrow V[\lam]$ uniquely extends to $F:V\rightarrow V[\lam]$ by \eqref{sesqui}, \eqref{leftLeibniz}.
\end{remark}
Let $V$ be a superalgebra of differential polynomials in the variables $u_i$, $(i\in I)$. We denote by 
$$\frac{\pd}{\pd u_i^{(n)}}, \frac{\pd_R}{\pd_R u_i^{(n)}}\in \mathrm{Der}(V)$$
denote the derivation with respect to $u_i^{(n)}$ from the left and the right respectively, (which coincide if $V$ is non-super).
Then the $\lambda$-bracket on $V$ is determined by the values $\{u_{i\lambda}u_j\}$ in the following sense.
\begin{theorem}[\cite{bdk09,suh18}]
\label{1.2}
Let $(V,\pd)$ be a superalgebra of differential polynomials in the variables $u_i$, $(i\in I)$, and $H_{ij}(\lam)$, $(i,j\in I)$, an element of $V[\lam]$ with the same parity as $u_iu_j$.
Then there is a unique Poisson vertex superalgebra structure on $V$ satisfying $\{u_{i\lam}u_j\}=H_{ji}(\lam)$ if and only if the $\lam$-bracket satisfies \eqref{skew} and \eqref{Jacobi} for $u_i,\ (i \in I)$. 
Moreover, the $\lam$-bracket is given by 
$$\{f_\lam g\}= \sum_{\begin{subarray}{c} i,j\in I,\\ n,m\geq 0\end{subarray}} (-1)^{\bar{f}\bar{g}+\bar{i}\bar{j}}\frac{\pd_R g}{\pd_R u_j^{(n)}}(\lam+\pd)^nH_{ji}(\lam+\pd)_\rightarrow(-\lam-\pd)^m\frac{\pd f}{\pd u_i^{(m)}},\quad f,g\in V.$$
\end{theorem}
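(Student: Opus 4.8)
The plan is to establish Theorem \ref{1.2} by splitting the statement into an easy direction and a harder one, and then verifying the explicit formula. For the ``only if'' direction, suppose a Poisson vertex superalgebra structure exists with $\{u_{i\lam}u_j\}=H_{ji}(\lam)$. Then axioms \eqref{skew} and \eqref{Jacobi} hold for all elements of $V$, in particular for the generators $u_i$, so this direction is immediate. The substance is in the ``if'' direction: given the data $H_{ij}(\lam)$ and the assumption that \eqref{skew} and \eqref{Jacobi} are satisfied on generators, one must construct the full $\lam$-bracket and check that \emph{all} the axioms \eqref{sesqui}--\eqref{leftLeibniz} hold on all of $V$.

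First I would use Remark \ref{1.1} to define the map $\{u_{i\,\lam}-\}\colon V\to V[\lam]$ for each fixed generator $u_i$: setting $\{u_{i\,\lam}u_j\}=H_{ji}(\lam)$ on generators and extending by forcing \eqref{sesqui} (the $\{f_\lam\pd g\}$ half) and \eqref{rightLeibniz}, which is exactly the content of Remark \ref{1.1} applied in the second slot. Explicitly this yields the master formula
$$\{u_{i\,\lam}g\}=\sum_{j\in I,\ n\geq 0}(-1)^{\bar{i}\bar{j}(\dots)}\frac{\pd_R g}{\pd_R u_j^{(n)}}(\lam+\pd)^nH_{ji}(\lam).$$
Next I would define $\{f_\lam g\}$ for general $f$ by extending in the first slot, this time forcing \eqref{leftLeibniz} together with the $\{\pd f_\lam g\}$ half of \eqref{sesqui}; this is again Remark \ref{1.1}, now applied in the first argument, and it produces precisely the displayed closed formula in the statement. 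By construction the resulting bilinear map automatically satisfies \eqref{sesqui}, \eqref{rightLeibniz}, and \eqref{leftLeibniz}.

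The hard part will be verifying that the remaining two axioms, skew-symmetry \eqref{skew} and the Jacobi identity \eqref{Jacobi}, propagate from the generators to all of $V$. The standard device is a reduction-of-arity argument: one shows that the ``defect'' of \eqref{skew} (resp.\ of \eqref{Jacobi}) is a derivation-like expression in each of its arguments — more precisely, that if \eqref{skew} holds for the pairs $(f,h)$ and $(g,h)$ then it holds for $(fg,h)$ and for $(\pd f,h)$, using the already-established Leibniz rules \eqref{rightLeibniz}, \eqref{leftLeibniz} and sesquilinearity \eqref{sesqui} to commute things through; since $V$ is generated as a differential algebra by the $u_i$, this forces \eqref{skew} everywhere once it holds on generators. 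The Jacobi identity \eqref{Jacobi} is treated the same way but is more delicate because it is trilinear: one fixes two arguments among generators and runs the induction on the complexity of the third, then having secured \eqref{Jacobi} whenever the last argument is arbitrary and the first two are generators, one runs a further induction to relax the second argument, and finally the first; at each stage the compatibility of the Leibniz rules with the bracket (and a careful bookkeeping of Koszul signs $(-1)^{\bar f\bar g}$) is what makes the inductive step close. This sign-bookkeeping in the super setting, and the fact that \eqref{leftLeibniz} itself must be checked to be consistent with \eqref{skew} before the Jacobi induction can even be set up, is where essentially all of the real work lies; the explicit formula for $\{f_\lam g\}$ then follows by unwinding the two successive extensions. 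I would refer to \cite{bdk09, suh18} for the detailed sign verifications rather than reproduce them here.
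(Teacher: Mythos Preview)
The paper does not give its own proof of Theorem \ref{1.2}: the result is stated with a citation to \cite{bdk09,suh18} and used as a black box. Your sketch is essentially the standard ``Master Formula'' argument of Barakat--De Sole--Kac (extended to the super case in \cite{suh18}): extend $\{u_{i\lam}-\}$ via sesquilinearity and the right Leibniz rule, then extend in the first slot via the left Leibniz rule, and finally propagate skew-symmetry and Jacobi from generators by showing the defects are derivations in each argument. So there is nothing to compare against here, and your outline matches the approach in the cited references.
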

\begin{ex}[Universal affine Poisson vertex algebra]
\emph{
Given a finite dimensional Lie algebra $L$ over $\C$ and a nondegenerate symmetric invariant bilinear form $\k$ on $L$, let $V^\k(L)$ denote the algebra of differential polynomials in the variables given by a basis of $L$. Then a $\lam$-bracket $\{-_\lam-\}: L\times L\rightarrow V^\k(L)[\lam]$ given by $\{u_\lam v\}=[u,v]+\k(u,v)\lam$, $(u,v\in L)$, defines a Poisson vertex algebra structure on $V^\k(L)$. This is called the universal \emph{affine Poisson vertex algebra} associated with $L$ at level $\k$.}
\end{ex}
Given a Poisson vertex superalgebra $V$, a \emph{Poisson vertex module} over $V$ (cf. \cite {A}) is a vector superspace $M$ which is a $V$-module as a supercommutative algebra $V$ and endowed with an even $\C$-bilinear map $\{-_\lam-\}: V\times M\rightarrow M[\lam]$
satisfying
\begin{equation}
\{\pd f_\lam m\}=-\lam \{f_\lam m\},
\end{equation}
\begin{equation}
\{f_\lam\{g_\mu m\}\}-(-1)^{\bar{f}\bar{g}}\{g_\mu\{f_\lam m\}\}=\{\{f_\lam g \}_{\lam+\mu} m\},
\end{equation}
\begin{equation}
\{f_\lam g\cdot m\}=\{f_\lam g\}\cdot m+(-1)^{\bar{f}\bar{g}}g\cdot \{f_\lam m\},
\end{equation}
\begin{equation}
\{f\cdot g_\lam m\}=(-1)^{\bar{m}\bar{g}}\{f_{\lam+\pd}m\}_\rightarrow g+(-1)^{\bar{f}(\bar{g}+\bar{m})}\{g_{\lam+\pd}m\}_\rightarrow f
\end{equation}
for $f,g\in V$, $m\in M$. Here the right $V$-action $M\times V\rightarrow M$ is defined by the action of $V$ as a supercommutative algebra. 
In this case, we define the $\lam$-bracket 
$$\{-_\lam-\}:M\times V\rightarrow M[\lam],\quad (m,f)\mapsto \{m_\lam f\}=-(-1)^{\bar{m} \bar{f}}_\leftarrow \{f_{-\pd-\lam}m\}.$$
For $m\in M$, the linear map $\{m_\lam-\}:V\rightarrow M[\lam]$ is called an \emph{intertwining operator} and $m$ is called the Hamiltonian.

For a Poisson vertex superalgebra $V$, the vector superspace $\mathrm{Lie}(V)=V/\pd V$ is called \emph{the space of local functionals}. We denote by 
\begin{center}
$\int: V\rightarrow \mathrm{Lie}(V),\quad f\mapsto \int f$
\end{center}
the canonical projection.
\begin{proposition}[\cite{bdk09,suh18}]
\label{1.3}\hspace{1mm}\\
\begin{enumerate}
\item 
The bilinear map
\begin{center}
$\mathrm{Lie}(V)\times \mathrm{Lie}(V)\rightarrow \mathrm{Lie}(V),\quad (\int f,\int g)\mapsto \int \{f_\lam g\}_{\lam=0}$
\end{center}
is well-defined and defines a Lie superalgebra structure. Moreover, if $V$ is even and an algebra of differential polynomials in the variables $\{u_i\}_{i\in I}$, then
\begin{center}
$\left[\int f,\int g\right]=\sum_{i.j \in I}\int\frac{\delta_R g}{\delta_R u_j} \{u_{i\pd} u_j\}_\rightarrow \frac{\delta f}{\delta u_i},$
\end{center}
where $\frac{\delta f}{\delta u_i}=\sum_{n\geq 0}(-\pd)^n \frac{\pd f}{\pd u_i^{(n)}}$ and $\frac{\delta_R f}{\delta_R u_i}=\sum_{n\geq 0}(-\pd)^n \frac{\pd_R f}{\pd_R u_i^{(n)}}$ denote the left and right variational derivative of $f$ with respect to $u_i$.
\item
The Lie superalgebra $\Lie(V)$  acts on $V$ by
\begin{center}  
$\eta:\Lie(V)\rightarrow \Der(V),\quad \int f \mapsto \{f_\lam -\}|_{\lam=0}.$
\end{center}
\end{enumerate}
\end{proposition}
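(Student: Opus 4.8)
The plan is to treat the two parts separately, deriving everything from the axioms \eqref{sesqui}--\eqref{leftLeibniz} of a Poisson vertex superalgebra together with the basic compatibilities established in Theorem \ref{1.2}. For part (1), the first thing to check is well-definedness: I must show that $\int\{f_\lam g\}|_{\lam=0}$ does not depend on the choice of representatives $f,g$ modulo $\pd V$. Replacing $f$ by $\pd f$ and using \eqref{sesqui} gives $\{\pd f_\lam g\}=-\lam\{f_\lam g\}$, which vanishes at $\lam=0$, so the bracket is insensitive to the first argument modulo $\pd V$. For the second argument, \eqref{sesqui} gives $\{f_\lam\pd g\}=(\lam+\pd)\{f_\lam g\}$, so at $\lam=0$ this equals $\pd\{f_0 g\}$, which lies in $\pd V$ and hence projects to zero under $\int$. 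Thus the pairing descends to $\Lie(V)\times\Lie(V)$. Skew-symmetry of the induced bracket follows by applying $\int$ to \eqref{skew} and specializing $\lam=0$: the term $\{f_{-\pd-\lam}g\}$ becomes, after integration by parts (i.e., moving $\pd$'s off under $\int$), congruent to $\{f_0 g\}$ modulo $\pd V$ up to the sign $(-1)^{\bar f\bar g}$, giving $\int\{g_0 f\}=-(-1)^{\bar f\bar g}\int\{f_0 g\}$.

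The Jacobi identity for $\Lie(V)$ is the heart of part (1). I would start from the Poisson vertex Jacobi identity \eqref{Jacobi}, apply $\int$ to both sides, and set $\lam=\mu=0$. On the left we get $\int\{f_0\{g_0 h\}\}-(-1)^{\bar f\bar g}\int\{g_0\{f_0 h\}\}$, which is exactly $[\int f,[\int g,\int h]]-(-1)^{\bar f\bar g}[\int g,[\int f,\int h]]$ once one checks that $\int\{f_\lam(\text{something})\}|_{\lam=0}$ only sees the $\lam=0$ part of the inner bracket — this uses again that $\int\{f_\lam\pd(-)\}|_{\lam=0}\in\pd V$. On the right we get $\int\{\{f_\lam g\}_{\lam+\mu}h\}|_{\lam=\mu=0}$; here I must argue that only the constant term in $\lam$ of $\{f_\lam g\}$ survives under $\int(-)|_{\lam+\mu=0}$, again by the $\pd$-to-$\lam$ correspondence \eqref{sesqui}, so this equals $\int\{(f_{(0)}g)_0 h\}=[[\int f,\int g],\int h]$. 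Rearranging yields the Jacobi identity in the form $[[\int f,\int g],\int h]=[\int f,[\int g,\int h]]-(-1)^{\bar f\bar g}[\int g,[\int f,\int h]]$. The explicit formula in the differential-polynomial case is then obtained by substituting the master formula of Theorem \ref{1.2} for $\{f_\lam g\}$, integrating by parts to convert $\pd/\pd u_i^{(m)}$ into the variational derivative $\delta/\delta u_i$, and collecting terms; this is a direct but somewhat tedious computation that I would only sketch.

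For part (2), I need to check that $\eta(\int f)=\{f_\lam -\}|_{\lam=0}$ is a well-defined derivation and that $\eta$ is a Lie superalgebra homomorphism. Well-definedness of $\eta(\int f)$ as an operator on $V$ requires independence of the representative $f$: replacing $f$ by $\pd f$ gives $\{\pd f_\lam -\}|_{\lam=0}=-\lam\{f_\lam-\}|_{\lam=0}=0$ by \eqref{sesqui}. That $\{f_0-\}$ is a (super)derivation of the commutative product is precisely the right Leibniz rule \eqref{rightLeibniz} specialized at $\lam=0$. Compatibility with $\pd$, i.e.\ that $\eta(\int f)$ commutes with $\pd$ (so that it is a derivation of the \emph{differential} algebra), follows from \eqref{sesqui}: $\{f_\lam\pd g\}|_{\lam=0}=(\lam+\pd)\{f_\lam g\}|_{\lam=0}=\pd\{f_0 g\}$. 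Finally, that $\eta([\int f,\int g])=[\eta(\int f),\eta(\int g)]$ as derivations is a restatement of the Jacobi identity \eqref{Jacobi} evaluated at $\lam=\mu=0$, read now as an identity of operators applied to an arbitrary $h\in V$ rather than after integration. I expect the main obstacle throughout to be the careful bookkeeping of where the formal variable $\lam$ (or $\mu$) can be traded against $\pd$ under the integral $\int$, and the attendant Koszul signs in the super case — the conceptual content is light, but getting the signs and the $\lam$-specializations exactly right in the Jacobi computation is where the real work lies.
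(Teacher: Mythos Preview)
The paper does not supply a proof of this proposition: it is quoted from \cite{bdk09,suh18} and stated without argument, so there is no in-paper proof to compare against. Your proposal is correct and is essentially the standard verification found in those references---well-definedness from sesquilinearity \eqref{sesqui}, skew-symmetry from \eqref{skew}, the Lie bracket and the $\eta$-homomorphism property from the Jacobi identity \eqref{Jacobi} at $\lam=\mu=0$, and the derivation property from the right Leibniz rule \eqref{rightLeibniz}; the explicit variational formula then follows from the master formula in Theorem~\ref{1.2} after integration by parts.
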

We use the following lemma in the below.
\begin{lemma}[{cf. \cite[Proposition 1.33]{bdk09}}]
\label{1.4}
Let $L$ be a finite dimensional Lie algebra over $\C$ equipped with a nondegenerate symmetric invariant bilinear form $\k$. For the universal affine Poisson vertex algebra $V^\k(L)$, the kernel of $\eta: \Lie(V^\k(L)) \rightarrow \Der(V^\k(L))$ is 
\begin{center}
$\Ker(\eta) =\mathrm{span}_\C \bigl\{\int 1, \int u \mid u\in Z(L)\bigr\},$
\end{center}
where $Z(L)$ is the center of $L$.
\end{lemma}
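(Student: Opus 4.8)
The plan is to determine $\Ker(\eta)$ using the grading of $V^\k(L)$ by conformal weight, in which the generator $u_i^{(n)}$ has degree $n+1$. Since $\pd$ raises this degree by one, it descends to a grading $\Lie(V^\k(L))=\bigoplus_{\Delta\ge 0}\Lie(V^\k(L))_\Delta$, with $\Lie(V^\k(L))_0=\C\int 1$ and $\Lie(V^\k(L))_1=\{\int u\mid u\in L\}$. A weight count in the master formula of Theorem~\ref{1.2} shows that for homogeneous $f$ of weight $\Delta$ the derivation $\eta(\int f)=\{f_\lam -\}|_{\lam=0}$ shifts conformal weight by $\Delta-1$; since homogeneous derivations of distinct degree are linearly independent, $\Ker(\eta)$ is a graded subspace of $\Lie(V^\k(L))$, and it suffices to analyze homogeneous $f$.

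The inclusion $\supseteq$ is routine: $\{1_\lam -\}\equiv 0$ by the Leibniz rules \eqref{rightLeibniz} and \eqref{leftLeibniz}, while for $u\in Z(L)$ one has $\{u_\lam u_j\}|_{\lam=0}=[u,u_j]=0$ for every generator $u_j$, so $\eta(\int u)=0$ because it is a derivation commuting with $\pd$. For the reverse inclusion, let $f$ be homogeneous of weight $\Delta$ with $\int f\in\Ker(\eta)$. If $\Delta=0$ then $f\in\C$ and $\int f\in\C\int 1$. If $\Delta=1$ then $f=u\in L$, and $0=\{u_\lam u_j\}|_{\lam=0}=[u,u_j]$ for all $j$ forces $u\in Z(L)$. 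The crux of the lemma is therefore the vanishing $\Ker(\eta)\cap\Lie(V^\k(L))_\Delta=0$ for $\Delta\ge 2$.

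For that step I would choose the basis $\{u_i\}$ of $L$ orthonormal for $\k$ (possible over $\C$). Specializing the formula of Theorem~\ref{1.2} to $g=u_l$, setting $\lam=0$, and repackaging the outcome in terms of the left variational derivatives should produce an identity of the form
\begin{equation*}
\{f_\lam u_l\}\big|_{\lam=0}\;=\;\sum_{i}[u_i,u_l]\,\frac{\delta f}{\delta u_i}\;+\;\pd\frac{\delta f}{\delta u_l}.
\end{equation*}
Writing $v_l:=\delta f/\delta u_l$, which is homogeneous of conformal weight $\Delta-1$, the hypothesis $\int f\in\Ker(\eta)$ gives $\pd v_l=-\sum_i[u_i,u_l]\,v_i$ for every $l$. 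Every coefficient $[u_i,u_l]$ lies in $L$ and so has differential order $0$, whereas $\pd$ strictly raises the differential order of any non-constant differential polynomial, with no cancellation of leading monomials; comparing orders for an index $l$ realizing $\max_l\mathrm{ord}(v_l)$ then forces every $v_l$ to be a constant. Being of positive conformal weight $\Delta-1\ge 1$, they all vanish. Hence $\delta f/\delta u_i=0$ for all $i$, so $f\in\pd V^\k(L)\oplus\C$, and homogeneity of weight $\ge 2$ gives $f\in\pd V^\k(L)$, i.e.\ $\int f=0$. Together with the cases $\Delta\le 1$ this yields $\Ker(\eta)=\mathrm{span}_\C\{\int 1,\ \int u\mid u\in Z(L)\}$.

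The step I expect to be the real obstacle is the last one, $\Delta\ge 2$: one has to unwind the $\to/\leftarrow$ operator conventions in Theorem~\ref{1.2} to reach the displayed identity in a usable form, and then make the differential-order comparison fully rigorous --- in particular, verifying that $\pd$ genuinely increases the order of every non-constant $v_l$ and that the leading monomials on the two sides cannot conspire to cancel. The auxiliary inputs --- gradedness of $\eta$ for the conformal weight, and the fact that a differential polynomial all of whose variational derivatives vanish lies in $\pd V^\k(L)\oplus\C$ --- are classical and can be quoted.
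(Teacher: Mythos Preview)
Your proposal is correct and takes essentially the same route as the paper: both derive the relations $\sum_i\bigl([u_i,u_j]+\k(u_i,u_j)\pd\bigr)\frac{\delta F}{\delta u_i}=0$ from the master formula and then use a differential-order comparison to force each $\delta F/\delta u_i$ into $\C$, finishing via the standard fact that vanishing variational derivatives place $F$ in $\C\oplus\pd V$. Your preliminary reduction by conformal weight and your choice of an orthonormal basis are cosmetic conveniences---the paper works directly with a general basis and the nondegeneracy of $\k$, reaching $F\in\C\oplus L$ modulo $\pd V$ by the same mechanism---so the step you flag as the ``real obstacle'' is exactly the one the paper carries out, and it goes through without difficulty.
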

\begin{proof}
Let $\{u_i\}_{i\in I}$ be a basis of $L$. Suppose $\int F\in \Ker(\eta)\subset \Lie(V^\k(L))$. By Theorem \ref{1.2},
\begin{center}
$\eta\bigl(\int F\bigr)= \sum\limits_{i,j,n} \pd^n \Bigl(([u_i, u_j]+ \k(u_i, u_j)\pd) \frac{\delta F}{\delta u_i}\Bigr) \frac{\pd}{\pd u_j^{(n)}}$.
\end{center}
It follows
$M_j=\sum\limits_{i} ([u_i, u_j]+ \k(u_i, u_j)\pd) \frac{\delta F}{\delta u_i}=0$ for $j\in I$. Define a degree on $V^\k(L)$ by $\deg(u_i^{(n)})=n$ and $\deg(AB)=\deg(A)+\deg(B)$. Then $\delta/\delta u$ preserves the degree. Let $G_i$ be the top degree component of $\delta F/\delta u$. Then the top component $M_j^{\mathrm{top}}$ of $M_j$ is $M_j^{\mathrm{top}}=\sum_i \k(u_i,u_j)\pd G_i$. Since $\k$ is nondegenerate, we obtain $\pd G_i=0$, which implies $G_i\in \C$. Since $V^\k(L)$ is an algebra of differential polynomials, we conclude $F\in\C\oplus L$. (See the proof of \cite[Proposition 1.5]{bdk09}.) 
Set $F=a+ \sum_i b_i u_i$, $(a, b_i\in\C)$. Then $M_j=[\sum_i b_i u_i, u_j]=0$, ($j\in I$), which implies $\sum_i b_i u_i\in Z(L)$.
\end{proof}

Finally, given a Poisson vertex superalgebra $V$, an element $\int f\in\text{Lie}(V)$ is called \emph{integrable} if there exists an infinite dimensional abelian Lie subsuperalgebra $\mathcal{H}$ of $\text{Lie}(V)$ which contains $\int f$. In this case, $\mathcal{H}$ is called an \emph{integrable Hamiltonian hierarchy} associated with $V$.
\subsection{Differential graded Poisson vertex superalgebra}
A \emph{differential graded Poisson vertex superalgebra} (d.g.\ Poisson vertex superalgebra) is a pair $(V,d)$ consisting of a Poisson vertex superalgebra $V=\oplus_{n\in \Z}V_n$ and a linear map $d:V\rightarrow V$, called the differential, satisfying
\begin{itemize}
\item $V$ is a $\Z$-graded Poisson vertex superalgebra, i.e., $V=\oplus_{n\in\Z}V_n$ as a vector superspace satisfying 
$$V_n\cdot V_m\subset V_{n+m},\quad \{V_{n\lam}V_m\}\subset V_{n+m}[\lam],$$
\item the linear map $d$ is of homogeneous parity and satisfies $d^2=0$, 
$$d:V_n\rightarrow V_{n+1},$$
$$d(ab)=d(a)\cdot b+(-1)^{\bar{d}\bar{a}}a\cdot d(b),\quad d(\{a_\lam b\})=\{d(a)_\lam b\}+(-1)^{\bar{d}\bar{a}}\{a_\lam d(b)\}.$$
\end{itemize} 
The cohomology $H^*(V;d)=\oplus_{n\in \Z}H^n(V;d)$ inherits a $\Z$-graded Poisson vertex algebra structure. Moreover, $H^0(V;d)$ is a Poisson vertex subsuperalgebra and $H^n(V;d)$, $(n\in\Z)$, is a Poisson vertex module over $H^0(V;d)$. In the sequel, we also use the notion of a \emph{differential graded vertex superalgebra}. The definition is similar and therefore we omit the details.
\subsection{Classical limit}\label{classical limit}
Let $V$ be a vertex superalgebra over a polynomial ring $\C[\epsilon]$. Suppose that $V$ is free as a $\C[\epsilon]$-module and the $\lambda$-bracket satisfies 
\begin{equation}\label{condition}
[V_\lam V]\subset \epsilon V.
\end{equation}
Define the vector superspace $V^{\mathrm{cl}}=V/\epsilon V$ and let $V\rightarrow V^{\mathrm{cl}}$, ($f\mapsto \bar{f}$) denote the canonical projection. Then 
$$V^{\mathrm{cl}}\times V^{\mathrm{cl}}\rightarrow V^{\mathrm{cl}},\quad (\bar{f},\bar{g})\mapsto \overline{f_{(-1)}g}$$
is well-defined and defines an associative supercommutative algebra structure on $V^{\mathrm{cl}}$. Since the translation operator $\pd$ of $V$ preserves $\epsilon V$, it induces a linear map
$$\pd: V^{\mathrm{cl}}\rightarrow V^{\mathrm{cl}},\quad \pd\bar{f}\mapsto \overline{\pd(f)},$$
which is a derivation of $V^{\mathrm{cl}}$.
Since the $\epsilon V$ is an ideal of $V$ by \eqref{condition}, the $\lam$-bracket of $V$ induces a bilinear map
$$\{-_\lam -\}: V^{\mathrm{cl}}\times V^{\mathrm{cl}}\rightarrow V^{\mathrm{cl}}[\lam],\quad (\bar{f} \bar{g})\mapsto\overline{[f_\lam g]}.$$
The triple $(V^{\mathrm{cl}},\pd, \{-_\lam -\})$ defines a Poisson vertex superalgebra, called the \emph{classical limit} of $V$ (cf. \cite{dk06,fbz}).
\section{Screening operators for classical affine $\W$-algebras}
In this section, we describe the classical affine $\W$-algebras by using screening operators. They will be obtained as a classical limit of the screening operators for the affine $\W$-algebras obtained in \cite{gen17}. We will use the same notation for Poisson vertex algebras as vertex algebras since there will be no confusion.
\subsection{Affine $\W$-algebras}\label{affW}
Let $\g$ be a finite dimensional simple Lie algebra over $\C$ with the normalized symmetric invariant bilinear form $\k=\inv$. Let $f\in\g$ be a nonzero nilpotent element, fix an $\Sl_2$-triple $\{e,h=2 x,f\}$ containing $f$ and denote by $\Gamma: \g=\oplus_{-d\leq j\leq d} \g_j$ the $\frac{1}{2}\Z$-grading given by  $\ad_x$, with $d$ the largest number such that $\g_d\neq0$. We fix a triangular decomposition   
$\g=\n_+\oplus\h\oplus \n_-$ so that $x\in \h$, $\g_{>0}=\oplus_{j>0}\g_j\subset \n_+$, and $\g_{<0}=\oplus_{j<0}\g_j\subset \n_-$. Let $\g=\h\bigoplus\oplus_{\al\in\Delta}\g_\al$ be a root space decomposition, $\Delta_j=\{\al\in\Delta\mid \g_\al\subset \g_j\}$, and $\Delta_{>0}=\sqcup_{j>0} \Delta_j$. Fix a nonzero root vector $e_\al$ in $\g_\al$ and a basis $e_i$, $(i\in I)$, of $\h$. Then $e_\al$, $(\al\in I\sqcup \Delta)$, form a basis of $\g$. We denote by $\{e_{\bar{\al}}\}_{\al\in I\sqcup \Delta}$ its dual basis of $\g$ with respect to $\k$. Let $c_{\al,\bt}^\gam$ denote the structure constants of $\g$, i.e., $[e_\al,e_\bt]=\sum_{\gam\in I \sqcup\Delta} c_{\al,\bt}^\gam e_\gam$.  

Let  $V^k(\g)$ be the universal affine vertex algebra of $\g$ at level $k$, generated by the even elements $e_\al$, $(\al\in I\sqcup \Delta)$, with $\lam$-bracket 
$[e_{\al\lam}e_\bt]=[e_\al,e_\bt]+k(e_\al|e_\bt)\lam$. 
Let $F^{\operatorname{ch}}(\g_{>0})$ be the charged free fermion vertex superalgebra associated with the symplectic odd vector superspace $\g_{>0}\oplus \g_{>0}^*$, generated by the odd elements $\vp_\al$, $\vp^\al$, $(\al\in\Delta_{>0})$, with $\lam$-bracket
$[\vp_{\al\lam}\vp^\bt]=\delta_{\al,\bt}$, $[\vp_{\al\lam}\vp_\bt]=[\vp^\al_\lam\vp^\bt]=0$.
Let $F(\g_\half)$ be the $\beta\gamma$-system vertex algebra associated with the symplectic vector space $\g_\half$, generated by $\Phi_\al$, ($\al\in \Delta_\half$), with $\lam$-bracket $[\Phi_{\al\lam}\Phi_\beta]=\chi([e_\al,\beta])$, where $\chi(-)=(f\mid-)$.

The affine $\W$-algebra $\W^k(\g,f)$ associated with the triple $(\g,f, k)$, ($k\in\C$), is the vertex algebra defined as the $0$-th cohomology of the differential graded vertex algebra
$$C^k(\g,f)=V^k(\g)\otimes F^{\operatorname{ch}}(\g_{>0})\otimes F(\g_\half),$$
with differential 
$$d_{(0)}=\Bigl[\sum_{\al\in \Delta_{>0}}\big((e_\al+\Phi_\al+\chi(e_\al) \big)\vp^\al-\frac{1}{2} \sum_{\al,\bt,\gam\in\Delta_{>0}} c_{\al,\bt}^\gam \vp_\gam \vp^\al\vp^\bt_\lam-\Bigr]|_{\lam=0},$$
called the \emph{BRST complex}.
The grading $C^k(\g,f)=\oplus_{n\in\Z}C_n^k(\g,f)$ is given by $\gr(e_\al)=\gr(\Phi_\bt)=0$, $\gr(\vp^\al)=-\gr(\vp_\al)=1$ with $\gr(AB)=\gr(A)+\gr(B)$ and $\gr(\pd A)=\gr(A)$. Then we have $d_{(0)}: C_n^k(\g,f)\rightarrow C_{n+1}^k(\g,f)$. We have $H^n(C^k(\g,f))=0$ for $n\neq 0$ (see \cite{kw04,kw05}). 
\subsection{Classical limit}\label{classical limit}

By \cite{kw04,kw05}, we have vertex subsuperalgebras $C^k_\pm(\g,f)$, which gives a decomposition of a complex $C^k(\g,f)=C^k_-(\g,f)\otimes C^k_+(\g,f)$ and satisfies  $H^n(C^k_+(\g,f))\cong \delta_{n,0} \C$. Thus $H^0(C^k_-(\g,f))\cong \W^k(\g,f)$. As a vertex superalgebra, $C^k_-(\g,f)$ is generated by
\begin{eqnarray*}
&&J^u=u+\sum_{\bt,\gam\in \Delta_{>0}} c_{u,\bt}^\gam \vp_\gam \vp^\bt,\ (u\in\g_{\leq0}),\\
&&\Phi_\al, (\al\in \Delta_\half),\quad \vp^\al, (\al\in \Delta_{>0}).
\end{eqnarray*}

Following \cite{gen17}, we introduce the \emph{classical affine $\W$-algebra} as the cohomology of the differential graded Poisson vertex algebra in the classical limit of $C^k_-(\g,f)$. 

Suppose $k+h^\vee\neq0$. Set $\ep=\frac{k'}{k+h^\vee}$, $(k'\in\C\backslash\{0\})$, 
$\bar{J}^u=\ep J^u$, $(u\in \g_{\leq0})$, and $\bar{\Phi}_\al=\ep \Phi_\al$, $(\al\in\Delta_\half$). Then we have 
\begin{equation}\label{ope1}
[\bar{J}^u_\lam \bar{J}^v]=\ep\bigl(\bar{J}^{[u,v]}+\big(k'(u|v)+o(\ep))\lam\bigr),\quad
[\vp^\al_\lam \bar{J}^u]=\ep\sum_{\bt\in \Delta_{>0}}c_{u,\bt}^\al \vp^\bt,
\end{equation}
\begin{equation}\label{ope2}
[\bar{\Phi}_{\al\lam}\bar{\Phi}_\bt]=\ep\cdot\chi([e_\al,e_\bt]),\quad 
[\bar{J}^u_\lam\bar{\Phi}_\al]=[\vp^\al_\lam\vp^\bt]=[\vp^\al_\lam\bar{\Phi}_\bt]=0,
\end{equation}
(cf. \cite{gen17}). Viewing $\epsilon$ as an indeterminate in \eqref{ope1}, \eqref{ope2}, we obtain a vertex superalgebra $\tilde{C}^{k}_-$ over the polynomial ring $\C[\epsilon]$. By Section \ref{classical limit}, we obtain a Poisson vertex superalgebra $\tilde{C}^{k}_-/\epsilon \tilde{C}^{k}_-$, which we denote by $C_{k'}^\mathrm{cl}=C_{k'}^\mathrm{cl}(\g,f)$. We have an isomorphism
$$C_{k'}^\mathrm{cl}\cong V^{k'}(\g_{\leq0})\otimes F(\g_\frac{1}{2})\otimes \mathrm{Sym}(\C[\pd]\g_{>0}^*)$$
of Poisson vertex superalgebra where
$V^{k'}(\g_{\leq0})$ is the universal affine Poisson vertex algebra generated by $\g_{\leq0}$ with $\lam$-bracket $\{u_{\lam} v \}=[u,v]+k'(u|v)\lam$, $F(\g_\half)$ the $\beta\gamma$-system Poisson vertex algebra generated by $\Phi_\al$, $(\al\in\Delta_\half)$, with $\lam$-bracket
$\{\Phi_{\al\lam} \Phi_\bt \}=( f|[e_\al,e_\bt])$, and $\mathrm{Sym}(\C[\pd]\g_{>0}^*)$ the differential $\C$-superalgebra generated by odd elements $\vp^\al$, $(\al\in \Delta_{>0})$, which satisfy 
$$\{\vp^\al_{\lam} u\}=\sum_{\bt\in\Delta_{>0}} c^\al_{u,\bt} \vp^\bt,\quad \{\vp^\al_\lam \vp^\bt\}=\{ \vp^\bt_\lam\Phi_\al \}=0.$$

Decompose the differential $d_{(0)}$ as $d_{(0)}=\dstz+\dnez+\dchz$ where
$$\dst=\sum_{\al\in\Delta_{>0}}e_\al \vp^\al-\frac{1}{2}\sum_{\begin{subarray}{c}\al,\bt\in \Delta_{>0}\\ \gam\in \Delta_{>0}\end{subarray}}c_{\al,\bt}^\gam \vp_\gam \vp^\al \vp^\bt,\ \dne=\sum_{\al\in \Delta_\frac{1}{2}}\Phi_\al \vp^\al,\ \dch=\sum_{\al\in\Delta_{>0}} \chi(e_\al)\vp^\al.$$
Then we have 
$$[d_{\mathrm{st}\lam}\bar{J}^u]=-\sum_{\begin{subarray}{c}\al\in I\sqcup \Delta_{\leq 0}\\ \bt\in\Delta_{>0}\end{subarray}} c_{u,\bt}^\al \bar{J}^{e_\al}\vp^\bt+\sum_{\bt\in \Delta_{>0}}(k'(u|v)(\pd+\lam)+o(\ep))\vp^\bt,$$
$$[d_{\mathrm{ne}\lam}\bar{J}^u]=\sum_{\begin{subarray}{c}\al\in\Delta_\half\\ \bt\in\Delta_{>0}\end{subarray}} c_{\bt}^\al \bar{\Phi}_\al\vp^\bt,\quad [\frac{1}{\epsilon}d_{\chi\lam}\bar{J}^u]=\sum_{\bt\in\Delta_{>0}}\chi([u,e_\bt])\vp^\bt,$$
$$[d_{\mathrm{st}\lam}\vp^\al]=-\frac{1}{2} \sum_{\bt,\gam\in \Delta_{>0}}c_{\bt,\gam}^\al \vp^\bt \vp^\gam,\quad [d_{\mathrm{ne}\lam} \bar{\Phi}_\al]=\sum_{\bt\in \Delta_\half}\chi([e_\bt,e_\al])\vp^\bt,$$
$$[d_{\mathrm{ne}\lam}\vp^\al]=[d_{\chi\lam}\vp^\al]=[d_{\mathrm{st}\lam}\bar{\Phi}_\al]=[\frac{1}{\epsilon}d_{\chi\lam}\bar{\Phi}_\al]=0.$$
The differential $d_{(0)}=\{(\dst+\dne+\dch)_\lam-\}|_{\lam=0}$ is given by 
$$\{d_{\mathrm{st}\lam}-\}, \{d_{\mathrm{ne}\lam}-\}, \{d_{\chi\lam}-\}:
C_k^\cl(\g,f) \rightarrow C_k^\cl(\g,f)[\lam],$$
which satisfy
$$\{d_{\mathrm{st}\lam} u \}=-\sum_{\begin{subarray}{c}\al\in I \sqcup\Delta_{\leq0}\\ \bt\in\Delta_{>0} \end{subarray}} c^\al_{u,\bt} e_\al\vp^\bt +\sum_{\bt\in\Delta_{>0}}k(u|e_\bt)(\pd+\lam)\vp^\bt, $$
$$\{d_{\mathrm{st}\lam} e_\al \}=0,\quad \{d_{\mathrm{st}\lam} \vp^\al \}=-\frac{1}{2} \sum_{\bt,\gam\in\Delta_{>0}}c^\al_{\bt,\gam} \vp^\bt \vp^\gam, $$
$$ \{d_{\mathrm{ne}\lam} u \}=\sum_{\begin{subarray}{c}\al\in\Delta_\half\\ \bt\in\Delta_{>0}\end{subarray}}c^\al_{u,\bt} e_\al\vp^\bt,\quad
\{d_{\mathrm{ne}\lam}e_\al \}=\sum_{\bt\in\Delta_\half}(f|[e_\bt,e_\al])\vp^\bt,\quad \{d_{\mathrm{ne}\lam}\vp^\al \}=0,$$
and
$$\{d_{\chi\lam}u \}=\sum_{\bt\in\Delta_{>0}}(f|[u,e_\bt])\vp^\bt,\quad \{d_{\chi\lam}\vp^\al \}=\{d_{\chi\lam}e_\al \}=0,$$
and are extended to $C_k^\mathrm{cl}$ by \eqref{sesqui}, \eqref{rightLeibniz} (see Remark \ref{1.1}). 
The 0-th cohomology of $C_k^\cl(\g,f)$, which we denote by $\W^k(\g,f)=H^0( C_k^\cl(\g,f))$, is a Poisson vertex algebra called \emph{the classical affine $\W$-algebra associated with $(\g,f,k)$} (\cite{gen17}). Note that $H^n( C_k^\cl(\g,f))=0$ holds for all $n\neq0$.
\subsection{Screening operators}

Introduce another grading wt on $C_k^\cl$ by 
$$\wt(u)= -2j, (u\in\g_j),\quad \wt(\Phi_\al)=0,\ (\al\in\Delta_\half),\quad \wt(\vp^\al)=2j \ (\al\in\Delta_j),$$
$$\wt(\pd A)=\wt(A),\ \text{and}\ \wt(AB)=\wt(A)+\wt(B). $$
and  a decreasing filtration $\{F_pC_k^\cl \}_{p\geq0}$ on $C_k^\cl$ by 
$$F_pC_k^\cl=\text{span}\{A\in C_k^\cl|\wt(A)\geq p \},$$
This filtration is exhaustive, separated, and compatible with the grading of $C_k^\cl$ as a complex. The associated spectral sequence $\{E_r, d_r\}_{r\geq0}$ has the differentials
$$d_0=d_{\mathrm{st}(0)}, \quad d_1=d_{\mathrm{ne}(0)}, \quad d_2=d_{\chi(0)},\quad d_r=0,\ (r\geq3),$$
and thus converges at $r=3$.
We will describe $\W^k(\g,f)=H^0(C_k^\cl(\g,f))$ by using it. Since the calculation is straightforward, we omit the details. (The analogous argument for vertex algebras can be found in \cite{gen17}.)

To calculate $E_1=H^*(C_k^\cl;d_0)$, notice that $d_0$ acts by $0$ on $V^k(\g_0)\otimes F(\g_\half)$ and that $(\bigwedge(\oplus_{\al\in\Delta_{>0}}\C \vp^\al), d_0)$ is a subcomplex isomorphic to the Chevalley-Eilenberg complex of the Lie algebra $\g_{>0}$ with coefficients in the trivial representation $\C$. Thus $(V^k(\g_0)\otimes F(\g_\half)\otimes\bigwedge(\oplus_{\al\in\Delta_{>0}}\C \vp^\al), d_0)$ is a subcomplex, whose cohomology is $V^k(\g_0)\otimes F(\g_\half)\otimes H^*(\g_{>0};\C)$. 
\begin{lemma}{\quad}
\begin{enumerate}
\item 
The natural map $V^k(\g_0)\otimes F(\g_\half)\otimes H^*(\g_{>0};\C)\rightarrow E_1$ is an isomorphism of graded vector spaces for generic $k\in\C$.
\item
The isomorphism $E_1^{(0)}\cong V^k(\g_0)\otimes F(\g_\half)$ is an isomorphism of Poisson vertex algebras.
\item
Each cohomology $E_1^{(n)}$ is a Poisson vertex module over $E_1^{(0)}$. Moreover, $E_1^{(n)}$ is isomorphic to $V^k(\g_0)\otimes F(\g_\half)\otimes H^n(\g_{>0};\C)$ as vector spaces.
\end{enumerate}
\end{lemma}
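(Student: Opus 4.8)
\noindent\emph{Proof sketch.}
The plan is to reduce all three statements to the single claim that, for generic $k$, the inclusion of sub-d.g.\ Poisson vertex algebras
\[
\iota\colon V^k(\g_0)\otimes F(\g_\half)\otimes\bigwedge\bigl(\oplus_{\al\in\Delta_{>0}}\C\vp^\al\bigr)\ \hookrightarrow\ \bigl(C_k^\cl,d_0\bigr),
\]
on which $d_0$ restricts to $1\otimes1\otimes d_{\mathrm{CE}}$ (with $d_{\mathrm{CE}}$ the Chevalley--Eilenberg differential of $\g_{>0}$ with trivial coefficients), is a quasi-isomorphism. Granting this, part (1) and the vector-space assertion of (3) follow by the Künneth formula together with $H^*(\bigwedge(\oplus_\al\C\vp^\al),d_{\mathrm{CE}})=H^*(\g_{>0};\C)$, using that $d_0$ annihilates the first two tensor factors; the Poisson vertex module structure of (3) and the Poisson vertex algebra structure of (2) are instances of the general properties of the cohomology of a d.g.\ Poisson vertex superalgebra recalled above; and (2) itself is immediate, since in cohomological degree $0$ the map $\iota$ is the embedding of the Poisson vertex subalgebra $V^k(\g_0)\otimes F(\g_\half)\subset C_k^\cl$ into the space of cocycles followed by projection onto $H^0$, which is a Poisson vertex algebra homomorphism and, by (1), a bijection.

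To show $\iota$ is a quasi-isomorphism I would write $R=V^k(\g_0)\otimes F(\g_\half)$, on which $d_0$ acts by $0$, and use the identification of graded algebras $C_k^\cl\cong R\otimes\mathrm{Sym}(\C[\pd]\g_{<0})\otimes\mathrm{Sym}(\C[\pd]\g_{>0}^*)$, on which $d_0$ is $\C[\pd]$-linear. The argument is then an iterated filtration computation. First filter by the total degree $\sigma$ in the generators of $V^k(\g_0)$: from the formula for $\{d_{\mathrm{st}\,\lam}u\}$ one sees that $d_0$ never lowers $\sigma$ (it kills $\g_0$ and $F(\g_\half)$), so this is a filtration of complexes whose associated graded differential $d'$ is obtained from $d_0$ by deleting the terms that carry a $\g_{<0}$-generator into a product containing a $\g_0$-generator. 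Next, on $\mathrm{gr}_\sigma C_k^\cl$ filter by $\phi=(\text{number of }\g_{<0}\text{-generators})+(\text{number of }\vp\text{-generators})$; again $d'$ never lowers $\phi$, and its $\phi$-preserving part is the Koszul differential $\delta$ with $\delta(u^{(n)})=k\sum_{\bt\in\Delta_{>0}}(u\mid e_\bt)\,\vp^{\bt(n+1)}$ for $u\in\g_{<0}$, $\delta|_R=0$, and $\delta(\vp^{\bt(m)})=0$. Finally, since $(-\mid-)$ restricts to a perfect pairing $\g_{<0}\times\g_{>0}\to\C$, for $k\neq0$ the complex $(\mathrm{Sym}(\C[\pd]\g_{<0})\otimes\mathrm{Sym}(\C[\pd]\g_{>0}^*),\delta)$ decomposes as a tensor product of elementary acyclic Koszul complexes $\C[u^{(n)}]\otimes\bigwedge(\vp^{\bt(n+1)})$ together with an ``unmatched'' exterior factor $\bigwedge(\oplus_\al\C\vp^\al)$; hence $H(\delta)\cong R\otimes\bigwedge(\oplus_\al\C\vp^\al)$, on which the induced differential is $1\otimes d_{\mathrm{CE}}$, yielding $R\otimes H^*(\g_{>0};\C)$.

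Reading this back up the two filtrations shows that $R\otimes\bigwedge(\oplus_\al\C\vp^\al)$ sits inside $\mathrm{gr}_\sigma C_k^\cl$ and inside $C_k^\cl$ as a subcomplex carrying the differential $1\otimes d_{\mathrm{CE}}$, and that $\iota$ already induces an isomorphism on the $E_1$-terms of the $\phi$- and then the $\sigma$-spectral sequence; the comparison theorem then yields that $\iota$ is a quasi-isomorphism. Convergence is harmless because each conformal-weight component of $C_k^\cl$ is finite-dimensional, so all the filtrations above are finite in each bidegree. The one place genericity of $k$ enters---and $k\neq0$ already suffices---is the non-degeneracy of the Koszul pairing $k(u\mid e_\bt)$ between $\g_{<0}$ and $\g_{>0}$ in the last step. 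I expect the main difficulty to be organizational: arranging $\sigma$ and $\phi$ so that $d_0$ is filtered with successive leading differentials exactly $0$ and $\delta$, and tracking representatives carefully enough for the comparison theorem to apply.
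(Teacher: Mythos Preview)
Your proposal is correct and follows exactly the strategy the paper sets up: the paper identifies the same subcomplex $V^k(\g_0)\otimes F(\g_\half)\otimes\bigwedge(\oplus_\al\C\vp^\al)$ with differential $1\otimes d_{\mathrm{CE}}$ and asserts that the inclusion into $(C_k^\cl,d_0)$ is a quasi-isomorphism, but then writes ``Since the calculation is straightforward, we omit the details'' and refers to \cite{gen17} for the vertex-algebra analogue. Your two-step filtration (first by $\sigma$, then by $\phi$) reducing to a Koszul complex is precisely the kind of argument that reference carries out, and your observation that $k\neq0$ already suffices for the Koszul step is correct in the classical limit (the quantum corrections that force stronger genericity in the vertex-algebra setting have been scaled away).
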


Let us describe the Poisson vertex modules $E_1^{(n)}$ more explicitly. Recall that the coadjoint representation on $\g_{>0}^*$ of $\g_0$ induces a representation of $\g_0$ on  $H^n(\g_{>0},\C)$ as described as follows. (cf. \cite[Chapter 3]{kumar02}) Let $W$ denote the Weyl group of $\g$ and set $I_0=\{i\in I| \al_i\in \Delta_0\}$, $W'_0=\{w\in W| w \Delta_0^+\subset \Delta^+\}$. Let $*: W\times \h^*\rightarrow\h^*$ denote the shifted action of $W$ and $l:W\rightarrow \Z_{\geq0}$ the length function. Then there is an isomorphism of $\g_0$-modules
$$H^n(\g_{>0};\C)\cong \bigoplus_{\begin{subarray}{c}w\in W'_0\\ l(w)=n\end{subarray}}L_0(w^{-1}*0),$$
where $L_0(w^{-1}*0)$ is the integrable highest weight $\g_0$-module with highest weight $w^{-1}*0$.

For a $\g_0$-module $M$, set $\mathbb{M}^k=V^k(\g_0)\otimes_\C M$. The space $\mathbb{M}^k$ has a unique Poisson vertex module over $\mathbb{M}^k$ such that $V^k(\g_0)$ acts as a commutative algebra by multiplication on the first component and the $\lam$-bracket 
$\{-_\lam-\}:V^k(\g_0)\otimes \mathbb{M}^k\rightarrow \mathbb{M}^k[\lam]$ satisfies $\{a_\lam b\otimes m\}=\{a_\lam b\}\otimes m+b \otimes a\cdot m$ for $a\in \g_0$, $b\in V^k(\g_0)$,  and $m\in M$. We denote by $\L(w^{-1}*0)$ the Poisson vertex module obtained from $L_0(w^{-1}*0)$. 
\begin{lemma}
There is an isomorphism 
$$E_1^{(n)}\cong  \bigoplus_{\begin{subarray}{c}w\in W'_0\\ l(w)=n\end{subarray}}\L(w^{-1}*0)\otimes F(\g_\half),$$
as Poisson vertex modules over  $V^k(\g_0)\otimes F(\g_\half)$.
\end{lemma}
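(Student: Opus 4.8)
The plan is to realise the isomorphism as induced from the subcomplex used to compute $E_1$, so that compatibility with the Poisson vertex structures comes for free, and then to read off the module structure from explicit $\lam$-brackets on generators. Put $K^\bullet:=V^k(\g_0)\otimes F(\g_\half)\otimes\bigwedge\bigl(\oplus_{\al\in\Delta_{>0}}\C\vp^\al\bigr)\subset C_k^\cl(\g,f)$, the subcomplex introduced just before the preceding lemma. First I would check that $K^\bullet$ is closed under products and under the $\lam$-bracket: this holds because $\{u_\lam\vp^\al\}\in\oplus_\bt\C\vp^\bt$ and $\{u_\lam\Phi_\bt\}=0$ for $u\in\g_0$, $\{\vp^\al_\lam\vp^\bt\}=\{\Phi_{\al\lam}\vp^\bt\}=0$, and $\{u_\lam v\}\in V^k(\g_0)[\lam]$ for $u,v\in\g_0$. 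Moreover $d_0=d_{\mathrm{st}(0)}$ restricts on $K^\bullet$ to $\mathrm{id}\otimes\mathrm{id}\otimes d_{\mathrm{CE}}$, where $d_{\mathrm{CE}}$ is the Chevalley--Eilenberg differential of $\g_{>0}$ with trivial coefficients: $\{d_{\mathrm{st}\lam}u\}=0$ for $u\in\g_0$ (the would-be terms vanish because $[\g_0,\g_{>0}]\subset\g_{>0}$ and $(\g_0|\g_{>0})=0$), $\{d_{\mathrm{st}\lam}\Phi_\al\}=0$, and $\{d_{\mathrm{st}\lam}\vp^\al\}|_{\lam=0}=-\frac{1}{2}\sum_{\bt,\gam\in\Delta_{>0}}c^\al_{\bt,\gam}\vp^\bt\vp^\gam$. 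Hence $H^\bullet(K^\bullet;d_0)=V^k(\g_0)\otimes F(\g_\half)\otimes H^\bullet(\g_{>0};\C)$, and by the preceding lemma the natural map $\iota\colon H^\bullet(K^\bullet;d_0)\to E_1$ is, for generic $k$, a bijection whose restriction to $H^0$ is the Poisson vertex algebra isomorphism $V^k(\g_0)\otimes F(\g_\half)\xrightarrow{\sim}E_1^{(0)}$.

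Next I would pin down the Poisson vertex module structure of $E_1^{(n)}$ over $E_1^{(0)}$. Since $K^\bullet$ is closed under products, $\iota$ is a morphism of $V^k(\g_0)\otimes F(\g_\half)$-modules, so it carries the commutative $E_1^{(0)}$-module structure on $E_1^{(n)}$ onto the free module $V^k(\g_0)\otimes F(\g_\half)\otimes H^n(\g_{>0};\C)$ generated by $1\otimes1\otimes H^n(\g_{>0};\C)$; by the module axioms the $\lam$-bracket is then determined by the values $\{u_\lam\iota(1\otimes1\otimes\bar\omega)\}$ and $\{\Phi_{\al\lam}\iota(1\otimes1\otimes\bar\omega)\}$ for $u\in\g_0$, $\al\in\Delta_\half$ and $\bar\omega\in H^n(\g_{>0};\C)$, together with the (affine) $\lam$-bracket of $E_1^{(0)}$. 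Choosing a $d_{\mathrm{CE}}$-cocycle $\omega$ in $\bigwedge(\oplus_\bt\C\vp^\bt)$ representing $\bar\omega$, one computes inside $K^\bullet$, using $\{\vp^\al_\lam u\}=\sum_\bt c^\al_{u,\bt}\vp^\bt$ together with \eqref{skew} and \eqref{rightLeibniz}, that $\{u_\lam\omega\}$ is $\lam$-independent and equals the coadjoint action $u\cdot\omega$ on the Chevalley--Eilenberg cochains, while $\{\Phi_{\al\lam}\omega\}=0$. Passing to cohomology classes gives $\{u_\lam\iota(1\otimes1\otimes\bar\omega)\}=\iota(1\otimes1\otimes(u\cdot\bar\omega))$ with $u\cdot\bar\omega$ the induced $\g_0$-action on $H^n(\g_{>0};\C)$, and $\{\Phi_{\al\lam}\iota(1\otimes1\otimes\bar\omega)\}=0$. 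These are exactly the defining relations of the Poisson vertex module $\mathbb{M}^k\otimes F(\g_\half)$ with $M=H^n(\g_{>0};\C)$, so the uniqueness in the construction of $\mathbb{M}^k$ yields $E_1^{(n)}\cong\mathbb{M}^k\otimes F(\g_\half)$ for $M=H^n(\g_{>0};\C)$.

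Finally, substituting the $\g_0$-module isomorphism $H^n(\g_{>0};\C)\cong\bigoplus_{w\in W'_0,\,l(w)=n}L_0(w^{-1}*0)$ and using that $M\mapsto\mathbb{M}^k$ is additive and carries $L_0(w^{-1}*0)$ to $\L(w^{-1}*0)$ gives $E_1^{(n)}\cong\bigoplus_{w\in W'_0,\,l(w)=n}\L(w^{-1}*0)\otimes F(\g_\half)$, which is the claim. I expect the main obstacle to be the first step: one must verify carefully that $d_0$ really preserves $K^\bullet$ and acts there with no cross terms mixing $V^k(\g_0)$ with the fermions $\vp^\al$, so that the identification of $E_1$ with $V^k(\g_0)\otimes F(\g_\half)\otimes H^\bullet(\g_{>0};\C)$ is compatible with the Poisson vertex structures and the $\g_0$-action on $E_1^{(n)}$ is honestly the action on Lie algebra cohomology. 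Once that is secured, the remaining steps are routine bookkeeping of $\lam$-brackets on generators.
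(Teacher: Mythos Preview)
Your proposal is correct and follows exactly the route the paper outlines: the paper itself omits the proof as ``straightforward'' (referring to \cite{gen17} for the vertex-algebra analogue), but the surrounding text sets up precisely the subcomplex $K^\bullet$, the identification of $d_0|_{K^\bullet}$ with the Chevalley--Eilenberg differential, the Kostant decomposition of $H^n(\g_{>0};\C)$, and the functor $M\mapsto\mathbb{M}^k$, which is just what you use. Your verification that $\{u_\lam\omega\}$ recovers the coadjoint $\g_0$-action and that $\{\Phi_{\al\lam}\omega\}=0$ is the only substantive computation, and it is correct.
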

In particular, we have $E_1^{(1)}\cong  \bigoplus_{i\in I\backslash I_0}\L(-\al_i)\otimes F(\g_\half)$ and the subspace $L_0(-\al_i)$ is identified as
$$L_0(-\al_i)\cong \bigoplus_{\bt\in [\al_i]}\C \vp^\bt\subset E_1^{(1)},$$
where $[\al_i]=\Delta_{>0}\cap (\al_i+Q_0)$ and $Q_0$ denotes the root lattice of $\Delta_0$. For $\al\in [\al_i]$ with $i\in I\backslash I_0$, we have
\begin{equation}\label{eq;3}
\pd \vp^\al=\frac{1}{k}\sum_{\begin{subarray}{c}\bt\in [\al],\\ \gam\in I\sqcup \Delta_0 \end{subarray}}c_{\bt,\gam}^\al e_{\bar{\gam}} \vp^\bt.
\end{equation}
Let us describe the differentials on $E_1$ induced from $d_{\mathrm{ne}(0)}$ and $d_{\chi(0)}$.
Consider the intertwining operators $Q_i^W: V^k(\g_0)\otimes F(\g_\half)\rightarrow \L(-\al_i)\otimes F(\g_\half)$ given by 
\begin{eqnarray*}
Q_i^W=\begin{cases}{}
\sum_{\bt\in[\al_i]} \{\Phi_\bt \vp^\bt_\lam- \}|_{\lam=0},&(i\in I_\half),\\
\sum_{\bt\in[\al_i]} \{(f|e_\bt) \vp^\bt_\lam- \}|_{\lam=0},& (i \in I_1).
\end{cases}
\end{eqnarray*}
Then we have:
\begin{lemma}
The differentials on $E_1$ induced by $d_{\mathrm{ne}(0)}$ and $d_{\chi(0)}$ are given by 
$$d_{\mathrm{ne}(0)}=\sum_{i\in I_\half} Q_i^W\ \text{and}\ d_{\chi(0)}=\sum_{i\in I_1} Q_i^W.$$
\end{lemma}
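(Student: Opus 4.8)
The plan is to compare each side of the two identities on a generating set of $E_1^{(0)}\cong V^k(\g_0)\otimes F(\g_\half)$ and then invoke the uniqueness statement of Theorem~\ref{1.2}. First note that $E_1^{(0)}$ is an algebra of differential polynomials in the generators $u\in\g_0$ and $\Phi_\al$, $\al\in\Delta_\half$, and that from the displayed formula for $\{d_{\mathrm{st}\lam}u\}$, the grading constraint $\deg u=0$, and the orthogonality $(\g_0|\g_{>0})=0$ we get $\dstz u=\dstz\Phi_\al=0$; since $\dstz$ is a derivation compatible with $\pd$, it annihilates all of $V^k(\g_0)\otimes F(\g_\half)$, so this space consists of $\dstz$-cocycles and represents $E_1^{(0)}$. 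As $\dnez$, $\dchz$, $\sum_{i\in I_\half}Q_i^W$ and $\sum_{i\in I_1}Q_i^W$ are all intertwining operators valued in the Poisson vertex module $E_1^{(1)}$, by Remark~\ref{1.1} it suffices to compare their values on $u$ and on $\Phi_\al$.

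For $\dnez$ the identity is quick. The weight-$1$ component of $d_{(0)}^2=0$ reads $\dstz\dnez+\dnez\dstz=0$, so $\dnez a$ is again a $\dstz$-cocycle for every $a\in V^k(\g_0)\otimes F(\g_\half)$; hence the induced differential on $E_1^{(0)}$ is literally $\dnez$, and it equals $\{[\dne]_\lam-\}|_{\lam=0}$ for the class of $\dne=\sum_{\al\in\Delta_\half}\Phi_\al\vp^\al$ in $E_1^{(1)}$. Using $\Delta_\half=\bigsqcup_{i\in I_\half}[\al_i]$, this class is $\sum_{i\in I_\half}m_i$ with $m_i=\sum_{\bt\in[\al_i]}\Phi_\bt\vp^\bt$, which is exactly the Hamiltonian defining $Q_i^W$; therefore $\dnez=\sum_{i\in I_\half}Q_i^W$. (Equivalently one checks on generators that $\dnez u=\sum_{\al,\bt\in\Delta_\half}c^\al_{u,\bt}\Phi_\al\vp^\bt$ and $\dnez\Phi_\al=\sum_{\bt\in\Delta_\half}\chi([e_\bt,e_\al])\vp^\bt$ match $\{(\sum_i m_i)_\lam-\}|_{\lam=0}$, using the skew-symmetry \eqref{skew}, the $\g_0$-action on $H^1(\g_{>0};\C)$, and $\{\Phi_{\al\lam}\Phi_\bt\}=\chi([e_\al,e_\bt])$.)

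The identity for $\dchz$ is the delicate one, and is where I expect the main obstacle. In contrast to $\dne$, the element $\dch=\sum_{\al\in\Delta_1}\chi(e_\al)\vp^\al$ is not a $\dstz$-cocycle — a short computation gives $\dstz\dch=-\tfrac12\sum_{\bt,\gam\in\Delta_\half}\chi([e_\bt,e_\gam])\vp^\bt\vp^\gam$ — so $\dchz$ is genuinely the $d_2$-differential of the spectral sequence, and the claimed formula must be read as the assertion that $\sum_{i\in I_1}Q_i^W$, restricted to $E_2^{(0)}=\Ker\bigl(\sum_{i\in I_\half}Q_i^W\bigr)$, computes $d_2$. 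For $a\in E_2^{(0)}$ one chooses $b$ with $\dstz b=\dnez a$ (possible since $\dnez a$ is a $\dstz$-cocycle and, because $a\in\Ker d_1$, is $\dstz$-exact); then the weight-$2$ component of $d_{(0)}^2=0$ together with $\dstz a=0$ gives $\dstz(\dchz a)=-\dnez^2 a=\dstz(\dnez b)$, so that $\dchz a-\dnez b$ is a $\dstz$-cocycle representing $d_2[a]$. It remains to show $[\dchz a-\dnez b]=\bigl(\sum_{i\in I_1}Q_i^W\bigr)a$; I would do this by the same generator bookkeeping through the derivation property, using $\dchz u=\sum_{\bt\in\Delta_1}\chi([u,e_\bt])\vp^\bt$ and the relation \eqref{eq;3} for $\pd\vp^\al$ inside $E_1$ to express the difference of $\dchz$ and $\sum_{i\in I_1}Q_i^W$ on $a$ as a combination of ghosts $\vp^\bt$ with $\bt\in\Delta_1\setminus\bigsqcup_{i\in I_1}[\al_i]$ (the decomposable degree-$1$ roots), and then showing that the correction $\dnez b$ — available precisely because $a\in\Ker d_1$ — cancels these against Chevalley--Eilenberg coboundaries $\dstz\vp^\bt$. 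The hard part is exactly this matching of the spectral-sequence correction term against the decomposable-root contributions; granting it, combining the two identities with $\W^k(\g,f)=H^0(C_k^\cl)=E_3^{(0)}$ recovers the screening realization \eqref{eq0}.
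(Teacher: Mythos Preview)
The paper omits the proof entirely, calling the calculation straightforward and deferring to \cite{gen17} for the vertex-algebra analogue; so there is no paper argument to compare against, and what you have written is already more than the paper provides.

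Your treatment of $\dnez$ is correct and complete: $\dne$ is a $\dstz$-cocycle, $\Delta_{1/2}=\bigsqcup_{i\in I_{1/2}}[\al_i]$, and the identification $d_1=\sum_{i\in I_{1/2}}Q_i^W$ on $E_1$ follows immediately. You are also right that the lemma's phrasing ``differentials on $E_1$'' is loose: $\dchz$ shifts the filtration weight by~$2$, so it is genuinely the $d_2$-differential, and you set up the standard zigzag correctly (choose $b$ with $\dstz b=\dnez a$, so that $d_2[a]=[\dchz a-\dnez b]$).

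The gap is that you leave the final identification $[\dchz a-\dnez b]=(\sum_{i\in I_1}Q_i^W)a$ in $E_2^{(1)}$ explicitly unproven (``I would do this\ldots'', ``granting it''). This step is the whole content for $\dchz$, and your heuristic for it is slightly off: the ghosts $\vp^\bt$ with $\bt\in\Delta_1\setminus\Pi$ are not $\dstz$-\emph{coboundaries} in the Chevalley--Eilenberg complex of $\g_{>0}$ (there $B^1=0$); they are simply non-cocycles. The coboundaries that matter live in the full complex $C_k^{\mathrm{cl}}$ and come from $\dstz$ applied to elements involving $\g_{<0}$ --- which is exactly where $b$ sits. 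What remains is to show that $Ra-\dnez b$ is $\dstz$-exact in $C_k^{\mathrm{cl}}$ (here $R=\dchz-\sum_{i\in I_1}Q_i^W$); you have checked it is a $\dstz$-cocycle via $\{\dstz,R\}=-\dnez^2$, but exactness --- equivalently, that its class in $E_1^{(1)}\cong\bigoplus_{i\notin I_0}\mathbb{L}_0^k(-\al_i)\otimes F(\g_{1/2})$ vanishes --- is not addressed. One clean route is to note that $Q:=\sum_{i\in I_1}Q_i^W$ has a $\dstz$-closed Hamiltonian, hence commutes with $\dstz$ and descends to $E_1$; then the $Q_0$-coset grading (the $\h$-weight modulo the root lattice $Q_0$ of $\g_0$) separates the images: $Qa$ lands in the summands indexed by $i\in I_1$, while both $Ra$ and $\dnez b$ carry $Q_0$-cosets of the form $[\al_{i_1}]+[\al_{i_2}]$ with $i_1,i_2\in I_{1/2}$, which are disjoint from $\{[\al_i]:i\in I_1\}$. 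Hence $[Ra-\dnez b]$ has no component in $\bigoplus_{i\in I_1}\mathbb{L}_0^k(-\al_i)\otimes F(\g_{1/2})$, and one finishes by checking it is $d_1$-exact in the remaining summands.
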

Recall that the complex $C_k^\mathrm{cl}=C_k^\mathrm{cl}(\g,f)$ is $\Z_{\geq0}$-graded and that $H^n(C_k^\mathrm{cl}(\g,f)\cong \delta_{n,0}\W^k(\g,f)$. Then we see that $\W^k(\g,f)$ is a subalgebra of the 0-th degree $C_{k,0}^\mathrm{cl}=V^k(\g_{\leq0})\otimes F(\g_\half)$. Since $\mathcal{I}=V^k(\g_{<0})\otimes F(\g_\half)$ is a Poisson vertex ideal of $C_{k,0}^\mathrm{cl}$, we obtain a homomorphism of Poisson vertex algebras
$$\W^k(\g,f)\rightarrow C_{k,0}^\mathrm{cl}/\mathcal{I}\cong V^k(\g_0)\otimes F(\g_{\half}).$$
It is injective and, by using the differentials $d_{\mathrm{ne}(0)}$ and $d_{\chi(0)}$, the image is described as in the following theorem.
\begin{theorem}
\label{2.2}
For generic $k\in\C$, there is an isomorphism 
$$j: \W^k(\g,f)\cong \bigcap\limits_{i\in I_\half\sqcup I_1} \Ker\Bigl(Q_i^W:V^k(\g_0)\otimes F(\g_\half)\rightarrow \L(-\al_i)\otimes F(\g_\half)\Bigr),$$
of Poisson vertex algebras.
\end{theorem}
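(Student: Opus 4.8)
The plan is to realize $\W^k(\g,f)$ via the spectral sequence $\{E_r,d_r\}$ and to track the degree-zero cohomology across the pages. Since the filtration is exhaustive, separated, and compatible with the cohomological grading, it converges, and by the last displayed remark $H^n(C_k^\cl(\g,f))\cong\delta_{n,0}\W^k(\g,f)$. The task is therefore to compute the degree-zero part of the $E_\infty=E_3$ page and to identify it, as a Poisson vertex algebra, with the stated joint kernel inside $V^k(\g_0)\otimes F(\g_\half)$.

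First I would assemble the input from the preceding lemmas. By the first lemma of the section, for generic $k$ we have $E_1^{(0)}\cong V^k(\g_0)\otimes F(\g_\half)$ as Poisson vertex algebras, while $E_1^{(1)}\cong\bigoplus_{i\in I\backslash I_0}\L(-\al_i)\otimes F(\g_\half)$ as a Poisson vertex module over $E_1^{(0)}$. The differential $d_1=d_{\mathrm{ne}(0)}\colon E_1^{(0)}\to E_1^{(1)}$ is, by the last lemma, the sum $\sum_{i\in I_\half}Q_i^W$, where each $Q_i^W$ lands in the summand $\L(-\al_i)\otimes F(\g_\half)$; since these summands are independent, $\Ker(d_1|_{E_1^{(0)}})=\bigcap_{i\in I_\half}\Ker(Q_i^W)$. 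Hence $E_2^{(0)}=\bigcap_{i\in I_\half}\Ker(Q_i^W)$. Next, $d_2=d_{\chi(0)}$ induces on $E_2^{(0)}$ the map $\sum_{i\in I_1}Q_i^W$ restricted to $E_2^{(0)}$; because $d_r=0$ for $r\geq3$, we get $E_3^{(0)}=E_\infty^{(0)}=\Ker\bigl(\sum_{i\in I_1}Q_i^W|_{E_2^{(0)}}\bigr)=\bigcap_{i\in I_\half\sqcup I_1}\Ker(Q_i^W)$, the last equality again because the images sit in distinct summands $\L(-\al_i)\otimes F(\g_\half)$. It remains to see that the edge map of the spectral sequence, composed with the injective homomorphism $\W^k(\g,f)\hookrightarrow C_{k,0}^\cl/\mathcal{I}\cong V^k(\g_0)\otimes F(\g_\half)$ described just before the theorem, is precisely this identification and is compatible with the Poisson vertex algebra structure; this gives the isomorphism $j$.

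For the identification of structures, I would note that $F_pC_k^\cl$ is a Poisson vertex filtration (compatible with $\pd$, product, and $\lam$-bracket), so every page $E_r$ is a $\Z$-graded Poisson vertex algebra and each $d_r$ is a derivation of the Poisson vertex structure in the appropriate sense; thus $E_\infty^{(0)}$ inherits a Poisson vertex algebra structure, which under $E_1^{(0)}\cong V^k(\g_0)\otimes F(\g_\half)$ is the one described in the second part of the first lemma. One then checks that the composite $\W^k(\g,f)=H^0(C_k^\cl)\to \gr H^0(C_k^\cl)=E_\infty^{(0)}\hookrightarrow E_1^{(0)}\cong V^k(\g_0)\otimes F(\g_\half)$ agrees with the map $\W^k(\g,f)\to C_{k,0}^\cl/\mathcal{I}$ above; this is a matter of unwinding definitions, since the degree-zero filtration step and the quotient by $\mathcal{I}=V^k(\g_{<0})\otimes F(\g_\half)$ both amount to discarding the $\g_{<0}$-directions and the $\vp$-ghosts. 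Injectivity is already known, and surjectivity onto the joint kernel is exactly the computation of $E_\infty^{(0)}$ above.

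The main obstacle is the genericity hypothesis: the identification $E_1\cong V^k(\g_0)\otimes F(\g_\half)\otimes H^*(\g_{>0};\C)$ in part (1) of the first lemma, and hence the whole collapse of the spectral sequence at $r=3$ with the stated $E_1$, relies on generic $k$ to kill the extra cohomology coming from the affine parameter, and this is precisely where the analogous vertex-algebra argument of \cite{gen17} must be transcribed to the Poisson vertex setting. I would isolate this as the one genuinely nontrivial input and treat the rest — the two induced differentials, the matching of the edge map with the explicit homomorphism into $V^k(\g_0)\otimes F(\g_\half)$, and the Poisson vertex compatibility — as bookkeeping, which is why one may legitimately say ``since the calculation is straightforward, we omit the details.''
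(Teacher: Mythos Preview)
Your proposal is correct and follows essentially the same approach as the paper, which in fact omits the details of this proof entirely, referring instead to the analogous vertex-algebra computation in \cite{gen17}. You have actually spelled out more than the paper does: the paper only sets up the spectral sequence, states the lemmas identifying $E_1$ and the induced differentials $d_1=d_{\mathrm{ne}(0)}$, $d_2=d_{\chi(0)}$, and then asserts injectivity of the map $\W^k(\g,f)\to V^k(\g_0)\otimes F(\g_\half)$ together with the description of its image, exactly as you derive it from $E_3^{(0)}=E_\infty^{(0)}$.
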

\noindent The operators  $\{ Q_i^W\}_ {i\in I_\half\sqcup I_1}$ are called the \emph{screening operators} for $\W^k(\g,f).$
We note that the level $k=1$ is generic \cite{gen17}.
The inclusion $j:\ \W^k(\g,f) \rightarrow V^k(\g_0)\otimes F(\g_\half)$ in Theorem \ref{2.2} induces a Lie algebra homomorphism between their spaces of local functionals
$$j_*: \Lie\bigl(\W^k(\g,f)\bigr) \rightarrow \Lie\bigl(V^k(\g_0)\otimes F(\g_\half)\bigr).$$
(See Proposition \ref{1.3}.)
\begin{lemma}
\label{2.3}
The Lie algebra homomorphism $j_*$ is injective.
\end{lemma}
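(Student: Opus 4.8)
\textbf{Proof proposal for Lemma \ref{2.3}.}

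The plan is to reduce the injectivity of $j_*$ to the injectivity of $j$ together with control over the kernels of the maps $\eta$ on both sides. Recall that $j$ is already known to be an injective homomorphism of Poisson vertex algebras (Theorem \ref{2.2}). Suppose $\int F\in\Lie(\W^k(\g,f))$ satisfies $j_*\bigl(\int F\bigr)=0$, i.e.\ $j(F)\in\pd\bigl(V^k(\g_0)\otimes F(\g_\half)\bigr)$. I want to conclude $\int F=0$, i.e.\ $F\in\pd\,\W^k(\g,f)$. First I would write $j(F)=\pd G$ for some $G\in V^k(\g_0)\otimes F(\g_\half)$, and the key point will be to show $G$ actually lies in the image of $j$, so that $G=j(G')$ for some $G'\in\W^k(\g,f)$; then, since $j$ intertwines the translation operators and is injective, $j(F)=\pd j(G')=j(\pd G')$ forces $F=\pd G'$, hence $\int F=0$.

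To show $G\in\Im(j)=\bigcap_i\Ker(Q_i^W)$, I would use that each $Q_i^W$ is an intertwining operator, hence commutes with $\pd$ in the appropriate sense: by the sesquilinearity axiom \eqref{sesqui} for Poisson vertex modules, $Q_i^W(\pd G)=\pd\,Q_i^W(G)$ inside $\L(-\al_i)\otimes F(\g_\half)$ (more precisely $Q_i^W$, being $\{m_{\lam}-\}|_{\lam=0}$ for a Hamiltonian $m$, satisfies $Q_i^W\circ\pd=\pd\circ Q_i^W$). Since $\pd G=j(F)\in\Im(j)$, we have $Q_i^W(\pd G)=0$, hence $\pd\,Q_i^W(G)=0$ for every $i\in I_\half\sqcup I_1$. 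Now the Poisson vertex modules $\L(-\al_i)\otimes F(\g_\half)$ are modules over $V^k(\g_0)\otimes F(\g_\half)$ of the form described before Theorem \ref{2.2}, built from integrable highest-weight $\g_0$-modules, and in particular they are algebras of differential polynomials (free as differential modules) with no nonzero $\pd$-torsion; thus $\pd\,Q_i^W(G)=0$ implies $Q_i^W(G)=0$. Hence $G\in\bigcap_i\Ker(Q_i^W)=\Im(j)$, as desired.

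The step I expect to be the main obstacle is the verification that $\pd$ acts without torsion on the modules $\L(-\al_i)\otimes F(\g_\half)$, i.e.\ that $\pd m=0$ in such a module forces $m=0$. This is where the explicit structure matters: $\L(-\al_i)\otimes F(\g_\half)$ is freely generated as a differential module, and one sees from the construction (e.g.\ from the identity \eqref{eq;3} expressing $\pd\vp^\al$ in terms of the generators of $V^k(\g_0)$, which shows $\L(-\al_i)$ is generated over $V^k(\g_0)$ by the finite-dimensional space $L_0(-\al_i)$ and is a free $\C[\pd]$-module on a basis adapted to the PBW-type basis of $V^k(\g_0)\otimes L_0(-\al_i)$) that $\pd$ is injective on it. Once this torsion-freeness is in hand, the rest of the argument is formal. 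An alternative route, avoiding the torsion issue, would be to invoke Lemma \ref{1.4}: the composition $\eta\circ j_*$ equals the restriction of the $\W^k(\g,f)$-action on $V^k(\g_0)\otimes F(\g_\half)$, and since $\W^k(\g,f)$ acts faithfully on itself (its center of local functionals being spanned by $\int 1$ alone, as $V^k(\g_0)$ has trivial analogous kernel once $k$ is generic) one could instead track $\Ker(\eta)$ on both sides; but the direct argument via intertwining with $\pd$ is cleaner, so I would present that.
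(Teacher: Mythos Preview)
Your proposal is correct and follows essentially the same route as the paper's own proof: write $j(F)=\pd G$, use that each $Q_i^W$ commutes with $\pd$ and that $\pd$ has trivial kernel on $\L(-\al_i)\otimes F(\g_\half)$ to conclude $G\in\Im(j)$, then finish by injectivity of $j$. The paper also treats the torsion-freeness of $\pd$ on these modules as ``easy to see'' without further justification, so your identification of this as the only nontrivial step is accurate.
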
 
\begin{proof}
It is easy to see
$$\Ker\Bigl(\pd:\L(-\al_i)\otimes F(\g_\half) \rightarrow \L(-\al_i)\otimes F(\g_\half)\Bigr) =0,\quad (i\in I_\half\sqcup I_1).$$
Take an element $f\in \W^k(\g,f)$ such that $\int f\in\Ker j_*$. Then there exists an element $G\in V^k(\g_0)$ such that $j(f)=\pd g$. Let $H_i^W$ denote the Hamiltonian of $Q_i^W$. Then we have
$$0=Q_i^W j(f)=\{H^W_{i\lam}j(f)\}|_{\lam=0}=\{H^W_{i\lam}\pd g\}|_{\lam=0}=\pd \{H^W_{i\lam}g\}|_{\lam=0},$$ 
and so that  $\{H^W_{i\lam}g\}|_{\lam=0}\in\Ker(\pd: \L(\al_i)\otimes F(\g_\half)) \ \rightarrow \L(\al_i)\otimes F(\g_\half))=0$. Therefore, we obtain $g\in j(\W^k(\g,f))$ and so that $\int f=0$. 
\end{proof}

Let $Q_\al^W: V^k(\g_0)\otimes F(\g_\half)\rightarrow V^k(\g_0)\otimes F(\g_\half)$, ($\alpha\in\Pi$), be the derivation determined by 
\begin{equation}
\label{eq;2}
\begin{cases}{}Q_\al^W e_\bt=\sum\limits_{\gam\in[\al]} c^\gam_{\bt\al} \Phi_\gam,\quad Q_\al^W \Phi_\bt=(f|[e_\al,e_\bt]), & (\al\in\Pi_\half), \\
Q_\al^W e_\bt=(f|[e_\bt,e_\al]),\quad Q_\al^W \Phi_\bt=0,& (\al\in\Pi_1),
\end{cases} 
\end{equation}
\begin{equation}
\label{eq;1}
[Q_\al^W,\pd]=\frac{1}{k}\sum_{\begin{subarray}{c}\bt\in I\sqcup \Delta_0\\ \gam \in [\al]\end{subarray}} c_{\al,\bt}^\gam e_{\bar{\bt}} Q_\gam^W.
\end{equation}
\begin{theorem}
\label{main1}
For generic $k\in\C$, the classical affine $\W$-algebra $\W^k(\g,f)$ is isomorphic to the Poisson vertex subalgebra 
\begin{equation}\label{isom}
\W^k(\g,f) \cong \bigcap_{\al\in \Pi}\Ker\Bigl(Q_\al^W: V^k(\g_0)\otimes F(\g_\half)\rightarrow V^k(\g_0)\otimes F(\g_\half)\Bigr),
\end{equation}
of $V^k(\g_0)\otimes F(\g_\half)$ invariant under the  derivations $Q_\al^W$, $(\al\in \Pi)$.
\end{theorem}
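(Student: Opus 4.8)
The plan is to deduce Theorem \ref{main1} from Theorem \ref{2.2} by comparing the two descriptions of $\W^k(\g,f)$ inside $V^k(\g_0)\otimes F(\g_\half)$: the one via the screening operators $Q_i^W$ indexed by $i\in I_\half\sqcup I_1$ (which land in $\L(-\al_i)\otimes F(\g_\half)$), and the one via the derivations $Q_\al^W$ indexed by $\al\in\Pi$ (which are honest derivations of $V^k(\g_0)\otimes F(\g_\half)$). The key observation is that for each $i\in I\setminus I_0$ the target $\L(-\al_i)\otimes F(\g_\half)$ decomposes, as explained before \eqref{eq;3}, so that $L_0(-\al_i)\cong\bigoplus_{\bt\in[\al_i]}\C\vp^\bt$, and the screening operator $Q_i^W$ decomposes componentwise: writing $Q_i^W f=\sum_{\bt\in[\al_i]}(Q_\bt^W f)\,\vp^\bt$ for $f\in V^k(\g_0)\otimes F(\g_\half)$ recovers exactly the derivations $Q_\al^W$ for $\al\in[\al_i]$, $\al\in\Pi$. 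Thus one should first check that the derivations $Q_\al^W$ defined by \eqref{eq;2}–\eqref{eq;1} are well-defined (this is Remark \ref{1.1}: prescribe the values on generators $e_\bt$ and $\Phi_\bt$ and extend by \eqref{sesqui}, \eqref{rightLeibniz}, using \eqref{eq;1} for consistency with $\pd$), and then verify the identity $Q_i^W(f)=\sum_{\bt\in[\al_i]}(Q_\bt^W f)\otimes\vp^\bt$ by a direct comparison of the defining formulas.

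Granting this, the first half of the argument is essentially formal: since the $\vp^\bt$, $(\bt\in[\al_i])$, are linearly independent in $\L(-\al_i)\otimes F(\g_\half)$, an element $f$ lies in $\Ker Q_i^W$ if and only if $Q_\bt^W f=0$ for all $\bt\in[\al_i]\cap\Pi$. But every indecomposable positive root $\al\in\Pi$ lies in $[\al_i]$ for a unique $i\in I\setminus I_0$ (namely the $i$ with $\al\in\al_i+Q_0$), and conversely $[\al_i]$ is the $Q_0$-string through $\al_i$ meeting $\Pi$. Hence $\bigcap_{i\in I_\half\sqcup I_1}\Ker Q_i^W=\bigcap_{\al\in\Pi}\Ker Q_\al^W$ as subsets of $V^k(\g_0)\otimes F(\g_\half)$, and composing with the isomorphism $j$ of Theorem \ref{2.2} yields the desired isomorphism of Poisson vertex algebras. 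The remaining point is that the right-hand side of \eqref{isom} is genuinely a Poisson vertex \emph{subalgebra}: each $Q_\al^W$ is a derivation commuting with $\pd$ up to the correction \eqref{eq;1} which itself involves the $Q_\gam^W$, so the common kernel is closed under multiplication and under $\pd$; closure under the $\lam$-bracket follows because $Q_\al^W$ acts by an intertwining operator (it is, up to the componentwise decomposition above, the Hamiltonian action of $H_i^W$), so it is a derivation of the $\lam$-bracket in the appropriate sense, and Theorem \ref{2.2} already identifies the common kernel with $\W^k(\g,f)$ as Poisson vertex algebras.

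I expect the main obstacle to be bookkeeping rather than conceptual: carefully matching indices in \eqref{eq;2}–\eqref{eq;1} against the formulas for $Q_i^W$, $d_{\mathrm{ne}(0)}$, $d_{\chi(0)}$ and the module structure on $\L(-\al_i)$, in particular tracking the structure constants $c^\gam_{\bt\al}$ versus $c^\al_{\bt,\gam}$ and the two cases $\al\in\Pi_\half$ versus $\al\in\Pi_1$ uniformly, and confirming that the consistency relation \eqref{eq;1} is exactly what is forced by applying $d_{\mathrm{ne}(0)}+d_{\chi(0)}$ to \eqref{eq;3}. One must also make sure the genericity hypothesis on $k$ is invoked only through Theorem \ref{2.2} (where it guarantees $E_1$ has the expected size) and through the nonvanishing of $k$ in \eqref{eq;3} and \eqref{eq;1}; no further genericity should be needed for the purely algebraic manipulations here.
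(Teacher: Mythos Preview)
Your proposal is correct and follows essentially the same route as the paper: decompose each screening operator as $Q_i^W=\sum_{\al\in[\al_i]}\vp^\al Q_\al^W$, verify \eqref{eq;2} directly from the defining formulas for $Q_i^W$, and derive \eqref{eq;1} from the commutation $[Q_i^W,\pd]=0$ together with \eqref{eq;3}, so that $\bigcap_i\Ker Q_i^W=\bigcap_{\al\in\Pi}\Ker Q_\al^W$ and Theorem~\ref{2.2} finishes the argument. The only cosmetic difference is that the paper makes $[Q_i^W,\pd]=0$ explicit via the Virasoro element $L=\frac{1}{2k}\sum e_\al e_{\bar\al}$ and the identity $\pd=L_{(0)}$, whereas you leave this implicit in ``$Q_i^W$ is an intertwining operator''; both justifications are valid.
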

\noindent We call the level $k\in\C$ \emph{generic} when \eqref{isom} holds.
\begin{proof}
Since $Q_i^W:V^k(\g_0)\otimes F(\g_\half)\rightarrow \L(-\al_i)\otimes F(\g_\half)$ acts by derivation, it decomposes as $Q_i^W=\sum_{\al\in[\al_i]} \vp^\al Q_\al^W$, where $Q_\al^W \in \Der(V^k(\g_0)\otimes F(\g_\half))$ and $\vp^\al$ is the multiplication by $\vp^\al$. We check that $Q_\al^W$ satisfies \eqref{eq;1} and \eqref{eq;2}.
By direct calculation, \eqref{eq;2} follows from the definition of $Q_i^W$. 
To show \eqref{eq;1}, recall that $V^k(\g_0)$ has a Virasoro element $L=\frac{1}{2k}\sum_{\al \in I\sqcup \Delta_0} e_\al e_{\bar{\al}}$, i.e., it satisfies 
$\{L_\lam L\}=(\lam+\pd)L+c_k L$ for some $c_k\in \C$ and $\pd=L_{(0)}$.
Then we have $[Q_i^W,\pd]=[\{Q_{i,\lam}^W-\}|_{\lam=0},\{L_{\mu}-\}|_{\mu=0}]=\{\{Q_{i\lam}^W L\}_\mu-\}|_{\lam=\mu=0}=\{-\pd Q_{i\lam}^W-\}|_{\lam=0}=0$. Here, we have used \eqref{Jacobi} in the second, \eqref{skew}in the third, and  \eqref{sesqui} in the last equality. Therefore,
$0=\sum_{\al\in [\al_i]} \vp^\al [Q_\al^W,\pd]-(\pd \vp^\al) Q_\al^W$.
Now \eqref{eq;1} follows from this by \eqref{eq;3}.
\end{proof}

\section{Geometric Realization of $\W^k(\g,f)$}
\subsection{Double coset space}
Consider the Lie algebra $\gpoly=\g\otimes \C[t^{\pm1}]$. By abuse of notation, we denote by $\k=\inv$ the invariant bilinear form on $\gpoly$ given by $(a t^n| b t^m)=(a|b) \delta_{n+m,0}$, which extends the one on $\g$. 
We extend the grading $\Gamma$ on $\g$ to $\gpoly$ by setting $\deg(a t^n)=\deg(a)+(d+1) n$ (see Section \ref{affW}) and fix a homogeneous basis $e_\al$, $(\al \in I \sqcup \afD)$, extending the basis $e_\al,$ $(\al\in I \sqcup \Delta)$, of $\g$. We denote $|\al|=\deg(e_\al)$ for simplicity and $c_{\al,\bt}^\gam$ the structure constants, i.e., $[e_\al,e_\bt]=\sum_{\gam}c_{\al,\bt}^\gam e_\gam$. Let $e_{\bar{\al}}$, $(\al \in I \sqcup \afD)$, denote the dual basis of $e_\al$, $(\al \in I \sqcup \afD)$, with respect to $\k$. 

Consider the completion $\glau=\varprojlim_{n>0} (\gpoly/\gpoly_{>n})$, which we call the \emph{loop algebra} of $\g$.
The grading on $\gpoly$ induces a decomposition $\glau=\lgp\oplus \glau_-$ where $\lgp=\varprojlim_{n>0} (\gpoly_{>0}/\gpoly_{>n})$ and $\glau_-=\gpoly_{\leq0}$. We decompose $X\in\glau$ as $X=X_++X_-\in \lgp\oplus \glau_{-}$. The subspace $\lgp$ is an affine scheme of infinite type. By setting $z_\al$ the coordinates of the basis $e_\al$, $(\al\in\afDp\subset \afD)$, of $\lgp$, we have $\C[\lgp]=\C[z_\al\mid \al\in \afDp]$.

Let $G$ be the connected simply-connected algebraic group whose Lie algebra is $\g$ and $\lG$ the loop group of $G$, whose Lie algebra is $\glau$. We have closed subgroups $\lG_{\pm}$ of $\lG$ whose Lie algebra is $\glau_{\pm}$.

Let us consider the quotient space $\lGn\backslash \lG$, which is an ind-scheme. It has an open subscheme $\lGp$. The $\C$-points of $\lGp$ is identified with the $\lGp$-orbits of the image of the identity in $\lGn\backslash \lG$. Since $\lGp$ is a prounipotent proalgebraic group, the exponential map $\exp: \lgp\rightarrow \lGp$ is an isomorphism of schemes. Thus the coordinate ring $\C[\lGp]$ is identified with $\C[\lgp]=\C[z_\al \mid \al\in\afDp]$.

The left multiplication $\lGp\times \lGp\rightarrow \lGp$, $((g_1,g_2)\mapsto g_1^{-1}g_2)$ induces a $\lgp$-action on $\C[\lGp]$ as derivations 
\begin{equation}\label{leftaction}
\xi^L: \lgp\rightarrow \Der(\C[\lGp]),\quad \phi\mapsto \phi^L.
\end{equation}
Since $\lGp$ is embedded into $\lGn\backslash \lG$ as an open subscheme, the right multiplication $\lGn\backslash \lG\times \lGn\rightarrow \lGn\backslash \lG$, $(([g_1],g_2)\mapsto [g_1g_2])$, induces a $\lgn$-action on $\C[\lGp]$ as derivations
\begin{equation}\label{rightaction}
\xi^R: \glau\rightarrow \Der(\C[\lGp]),\quad \phi\mapsto \phi^R.
\end{equation}
The commutator of these actions $\xi^L$, $\xi^R$ can be expressed by using the distinguished element $K=\exp(\sum_{\al\in\afDp} z_\al e_\al)\in\lGp(\C[\lGp])$. Note that this element is coordinate independent since it is identified with the identity morphism under the correspondence
$$\lGp(\C[\lGp])=\Hom(\C[\lGp],\C[\lGp]).$$ 
\begin{lemma}
\label{3.1}
For $u\in \lgp$ and $v\in \glau$, 
$[u^L,v^R]=[u,(K v K^{-1})_-]_+^L$
holds.
\end{lemma}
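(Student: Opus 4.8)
The plan is to observe that both sides of the claimed identity are derivations of $\C[\lGp]$, and that any derivation $D$ of $\C[\lGp]$ is completely determined by the element $(DK)K^{-1}\in\lgp\,\hat\otimes\,\C[\lGp]$: indeed $\C[\lGp]=\C[z_\al\mid\al\in\afDp]$ with $K=\exp(\sum_\al z_\al e_\al)$, so $DK$ encodes all the $Dz_\al$. Thus it suffices to apply both sides to the tautological element $K$ and compare after right multiplication by $K^{-1}$.

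First I would record the infinitesimal formulas. From the definition of $\xi^L$ via $(g_1,g_2)\mapsto g_1^{-1}g_2$ one gets $u^L K=-uK$ for $u\in\lgp$; differentiating $KK^{-1}=1$ gives $u^L(K^{-1})=K^{-1}u$, and hence for a constant $v\in\glau$, writing $KvK^{-1}=\operatorname{Ad}_K(v)$ as an element of $\glau\,\hat\otimes\,\C[\lGp]$,
$$u^L(KvK^{-1})=(u^LK)vK^{-1}+Kv(u^LK^{-1})=[KvK^{-1},u]=-[u,KvK^{-1}].$$
Since $u^L$ acts only on the $\C[\lGp]$-factor, it commutes with the truncation projections $(-)_\pm\colon\glau\,\hat\otimes\,\C[\lGp]\to\glau_\pm\,\hat\otimes\,\C[\lGp]$, so $u^L\bigl((KvK^{-1})_+\bigr)=\bigl([KvK^{-1},u]\bigr)_+$. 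Next, for $\xi^R$: because $\lGp$ is the big cell in $\lGn\backslash\lG$, we may factor $K\exp(sv)=n(s)k(s)$ with $n(s)\in\lGn$, $k(s)\in\lGp$, $n(0)=e$, $k(0)=K$; differentiating at $s=0$ gives $Kv=\dot n(0)K+\dot k(0)$, i.e.\ $\dot k(0)K^{-1}=KvK^{-1}-\dot n(0)$. As $\dot n(0)\in\lgn\,\hat\otimes\,\C[\lGp]$ and $\dot k(0)K^{-1}\in\lgp\,\hat\otimes\,\C[\lGp]$, the directness $\glau=\lgp\oplus\glau_-$ forces $\dot n(0)=(KvK^{-1})_-$ and therefore $v^R K=\dot k(0)=(KvK^{-1})_+K$.

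Now I would assemble the commutator on $K$. Put $\zeta=(KvK^{-1})_+$. Applying the Leibniz rule for the derivations $u^L,v^R$ entrywise to the matrix-valued functions built from $K$, and using $v^R(uK)=u\,v^R(K)=u\zeta K$,
$$[u^L,v^R]K=u^L(\zeta K)-v^R(-uK)=(u^L\zeta)K-\zeta uK+u\zeta K.$$
Substituting $u^L\zeta=\bigl([KvK^{-1},u]\bigr)_+$ and decomposing $[KvK^{-1},u]=[(KvK^{-1})_+,u]+[(KvK^{-1})_-,u]$, the $\lgp$-valued contributions $[(KvK^{-1})_+,u]$ and $u\zeta-\zeta u=[u,\zeta]=[u,(KvK^{-1})_+]$ cancel, leaving $\bigl([(KvK^{-1})_-,u]\bigr)_+=-[u,(KvK^{-1})_-]_+$. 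Since for an $\lgp$-valued function $\phi$ one has $\phi^LK=-\phi K$, this shows $[u^L,v^R]K=\bigl([u,(KvK^{-1})_-]_+\bigr)^LK$, and multiplying by $K^{-1}$ identifies the two derivations.

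The Leibniz rules and the identification $\Der(\C[\lGp])\cong\lgp\,\hat\otimes\,\C[\lGp]$ are routine. The step requiring real care is the factorization $K\exp(sv)=n(s)k(s)$ and its first-order expansion: one must work in the pro-algebraic/ind-scheme setting over the dual numbers $\C[s]/(s^2)$ rather than over an honest curve, and check that the big-cell isomorphism $\lGn\times\lGp\xrightarrow{\sim}\lGn\cdot\lGp$ survives this base change, so that $\dot n(0)$ and $\dot k(0)$ are well-defined in the completed tensor products and the decomposition argument above is legitimate. Once this is in place the remainder is bookkeeping with $\ad$ and the projections $(-)_\pm$.
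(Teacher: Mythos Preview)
Your proof is correct and follows essentially the same approach as the paper's: both compute the infinitesimal formulas $u^LK=-uK$ and $v^RK=(KvK^{-1})_+K$ (the latter via the factorization of $Ke^{\epsilon v}$ through the big cell) and then extract the commutator. The only difference is packaging: the paper works pointwise at a generic $g\in\lGp$ with two dual numbers $\epsilon_1,\epsilon_2$ and explicit BCH-type expansions of $g_1(\epsilon_1,\epsilon_2)$, $g_2(\epsilon_1,\epsilon_2)$, whereas you work globally with the tautological element $K$ and the identification $D\mapsto (DK)K^{-1}$, which lets the Leibniz rule and the projection $(-)_+$ do the bookkeeping for you.
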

\begin{proof}
We use the notation $\e^X=\exp(X)$ for brevity. Let $\epsilon_i$ be dual numbers, i.e., satisfies $\epsilon_i^2=0$.
For $F\in\C[\lGp]$ and $g\in \lGp$, we have
\begin{align*}
(v^R F)(g)&= \text{the}\ \ep_2\text{-linear term of}\ F(g \e^{\ep_2 v})\\
&= \text{the}\ \ep_2\text{-linear term of}\ F((\e^{(g v g^{-1})_+})g),
\end{align*}
and 
\begin{align*}
(u^L v^R F)(g)&= \text{the}\ \ep_1\text{-linear term of}\ (v^RF)(\e^{-\ep_1 u}g)\\
&= \text{the}\ \ep_1\ep_2\text{-linear term of}\ F((\e^{(\e^{-\ep_1 u}g v g^{-1}\e^{\ep_1 u})_+})\e^{-\ep_1 u}g)\\
&= \text{the}\ \ep_1\ep_2\text{-linear term of}\ F(g_1(\ep_1,\ep_2)),
\end{align*}
where 
$$g_1(\ep_1,\ep_2)=\e^{-u \ep_1+(g v g^{-1})_+ \ep_2-([u,g v g^{-1}]_++(gv g^{-1})_+u) \ep_1 \ep_2}g.$$
Similarly, we obtain
$$(v^R u^L)(F)(g)=\text{the}\ \ep_1\ep_2\text{-linear term of}\ F(g_2(\ep_1,\ep_2))$$
where 
$$g_2(\ep_1,\ep_2)=\e^{-u \ep_1+(g vg^{-1})_+ \ep_2-u(g vg^{-1})_+ \ep_1\ep_2}g.$$
Since 
$$g_1(\ep_1,\ep_2)=\e^{-[u,(gvg^{-1})_-]_+\ep_1\ep_2}g_2(\ep_1,\ep_2),$$
we obtain $[u^L, v^R]F(g)=[u,(g v g^{-1})_-]_+^L F(g)$.
\end{proof}

In the sequel, we assume that $(\g,f)$ satisfies the condition (F) introduced in Section 1:
\begin{enumerate}
\item[(F1)] The grading $\Gamma$ is a $\Z$-grading.
\item[(F2)] There exists an element $y\in\g_d$ such that $s=f +y  t^{-1}\in\gpoly$ is semisimple.
\item[(F3)] The Lie subalgebra $\Ker(\ad_s)\subset\g[t^{\pm1}]$ is abelian and $\Im(\ad_s)\cap \gpoly_0=\g_0$.
\end{enumerate}
\begin{ex}
\emph{
The following pairs $(\g,f)$ satisfy (F).
\begin{enumerate}
\item 
$(\g,f)$ with $f$ principal nilpotent element. 
\item
$(\g(C_n),f_{(2^n)})$ and $(\g(C_{2n}),f_{(4^n)})$, $(n \geq2)$.
\item
$(\g(X_n),f)$ with $X=E,F,G$ listed in Table \ref{tb:exceptional}.
\end{enumerate}}
\end{ex}
Here we have used the classification of nilpotent elements of $\g(C_n)$ by symplectic partitions and that of $\g(X_n)$, $(X=E,F,G)$, by the weighted Dynkin diagram, (see \cite{collmcg93}.)
\begin{table}[h]{Table 4.1}
\label{tb:exceptional}
\begin{tabular}{|c|c||c|c|} \hline
    Dynkin diagram&weights of vertices& Dynkin diagram&weights of vertices \\ \hline
$G_2$         &\wtGa& $E_{6\ \text{(continued)}}$ & \wtEac \\ \cline{2-4}
\dynkinGtwo&\wtGb&$E_7$                 & \wtEba  \\ \cline{1-2}\cline{4-4}
$F_4$           &\wtFa&\dynkinEseven    & \wtEbb  \\ \cline{2-4}
\dynkinFfour&\wtFb& $E_8$                 & \wtEca \\ \cline{2-2}\cline{4-4}
                 &\wtFc&\dynkinEeight        & \wtEcb \\ \cline{2-2}\cline{4-4}
                  &\wtFd&                         &\wtEcc  \\ \cline{1-2}\cline{4-4}
$E_6$          &\wtEaa&                        &\wtEcd   \\ \cline{4-4}
 \dynkinEsix  &\wtEab&                          &\\ \hline

\end{tabular}
\end{table}

Let $\abel$ denote the completion of $\Ker(\ad_s)$ in $\glau$. It is abelian by (F2). Let $A\subset \lG$, (resp.\ $A_\pm\subset \lG$) be the closed subgroup of $\lG$ whose Lie algebra is $\abel$, (resp.\ $\abel_{\pm}=\abel\cap\glau_{\pm}$). 
We consider the quotient space $\hom$. It admits a left $\lGp$-action
$$\lGp \times \hom \rightarrow \hom,\quad (g, h\Ap) \mapsto g^{-1}h\Ap,$$
and a right $\An$-action 
$$\hom \times \An \rightarrow \lGn\backslash \lG/\Ap,\quad (h\Ap,g)\mapsto hg\Ap.$$
The right $\An$-action is well-defined since $A$ is abelian. These actions induce infinitesimal actions $\xi^L: \lgp\rightarrow \Der\C[\hom]$, $(\phi\mapsto \phi^L)$ and $\xi^R: \abeln\rightarrow \C[\hom]$, $(\phi\mapsto \phi^R)$ respectively. In particular, $\C[\lGp/A_L+]$ becomes a differential algebra by letting $s^R$ be the differential. 

Set 
\begin{equation}\label{E}
E_\al =k(e_\al|K s K^{-1})\in \C[\lGp],\quad (\al\in I \sqcup\Delta_0).
\end{equation}
We have $E_\al\in \C[\lGp/\Ap]$ since 
\begin{eqnarray*}
a^R (e_\al|K s K^{-1})&=& \text{the}\ \ep\text{-linear term of}\ (e_\al|K \e^{\ep a} s (K \e^{\ep a})^{-1})\\
&=&(e_\al|K[a,s]K^{-1})=0,
\end{eqnarray*}
for $a\in \abelp$.
Since the Poisson vertex algebra $V^k(\g_0)$ is an algebra of differential polynomials, there exists a unique homomorphism of differential algebras
$$\Psi_k: V^k(\g_0)\rightarrow \C[\lGp/\Ap],\quad e_\al \mapsto E_\al.$$
\begin{theorem}
\label{main2}
Suppose that $(\g,f)$ satisfies (F) and $k\in\C$ is generic.
Then $\Psi_k$ is an isomorphism of differential algebras and satisfies 
\begin{equation}\label{intertwins}
\Psi_k Q_\al^W \Psi_k^{-1}=-\frac{1}{k}e_\al^L,\quad \al\in\Pi.
\end{equation}
\end{theorem}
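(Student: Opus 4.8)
The plan is to establish three things: that $\Psi_k$ is injective, that it is surjective, and that it intertwines the screening derivations $Q_\al^W$ with the left action $-\frac{1}{k}e_\al^L$ as in \eqref{intertwins}. I would begin with the intertwining relation, since it is the most structural part and will feed into the surjectivity argument. The key computation is to evaluate $e_\al^L E_\bt$ for $\al\in\Pi$ (equivalently $\al\in\afDp$ of degree corresponding to a simple root of $\Delta_{>0}$) and $\bt\in I\sqcup\Delta_0$. Using the definition \eqref{E}, $E_\bt = k(e_\bt\mid KsK^{-1})$, together with Lemma \ref{3.1} (which computes commutators of left and right actions via the universal element $K$), and the fact that $s^R$ is the differential on $\C[\lGp/\Ap]$, one expands $e_\al^L(e_\bt\mid KsK^{-1})$ as the $\ep$-linear term of $(e_\bt\mid \e^{-\ep e_\al}KsK^{-1}\e^{\ep e_\al})$ up to the subtlety that $K$ itself depends on the coordinates; carefully, $e_\al^L$ acts on $K$ by left translation, giving $e_\al^L K = -(e_\al K)$-type terms, and one extracts $(e_\bt\mid K[\,\cdot\,,s]K^{-1})$. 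Matching the resulting expression against the defining relations \eqref{eq;2}, \eqref{eq;1} of $Q_\al^W$ — in particular using (F2) so that $s = f + yt^{-1}$ and the bracket $[e_\al,s]$ produces exactly the $\Phi$-type and $\chi$-type terms — should yield $e_\al^L \circ \Psi_k = -k\,\Psi_k\circ Q_\al^W$ on generators, hence everywhere by the Leibniz rule and compatibility with $\pd$ versus $s^R$.

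Next I would prove surjectivity of $\Psi_k$. The target $\C[\lGp/\Ap]$ is the coordinate ring of $\lGp/\Ap$, and by (F3) the quotient $\lgp/\abelp$ is, degree by degree, complemented by $\g_0$ in degree $0$ and by $\Im(\ad_s)$-type pieces in higher degrees; more precisely the infinitesimal action identifies the cotangent space with a space matched by the $E_\al$ and their $s^R$-derivatives. The cleanest route is a graded/filtered argument: put the weight grading on both sides, show that the symbol (leading term) of $E_\al$ recovers a coordinate transverse to $\abelp$, and that applying $s^R$ repeatedly sweeps out the remaining coordinates $z_\gam$, $\gam\in\afDp$. Condition (F3), $\Im(\ad_s)\cap\gpoly_0 = \g_0$ together with $\Ker(\ad_s)$ abelian, is precisely what guarantees that $\glau = \abel \oplus \Im(\ad_s)$ (as $\ad_s$ is semisimple by (F2)) and that the $\g_0$-worth of functions $E_\al$ ($\al\in I\sqcup\Delta_0$) plus their $s^R$-translates span everything; so $\Psi_k$ hits a set of polynomial generators of $\C[\lGp/\Ap]$, giving surjectivity.

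For injectivity I would compare graded dimensions. Both $V^k(\g_0)$ and $\C[\lGp/\Ap]$ are algebras of differential polynomials (the former on a basis of $\g_0$, the latter on coordinates dual to a complement of $\abelp$ in $\lgp$), and their homogeneous pieces with respect to the weight grading are finite-dimensional with equal dimensions once one checks that $\dim\g_0 = \dim(\lgp/\abelp)$ "per period", which again is (F3). Since $\Psi_k$ is a surjective graded map between graded vector spaces of equal finite dimension in each degree, it is an isomorphism. Alternatively, injectivity can be deduced from Theorem \ref{main1} combined with the already-established intertwining property: the screening kernels on both sides are then identified, and freeness of the relevant generators forces $\ker\Psi_k = 0$; but the dimension-count is more self-contained. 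I expect the main obstacle to be the surjectivity step — specifically, verifying carefully that the iterated $s^R$-derivatives of the $E_\al$ generate the full coordinate ring and do not get trapped in a proper subalgebra. This is where condition (F3) must be used in full strength, and where the interaction between the two gradings (the cohomological $\Z_{\geq0}$-grading and the weight grading $\wt$) on $\glau$ has to be controlled; a clean way to organize it is to filter $\lgp$ by degree and run induction, checking at each stage that the new coordinate $z_\gam$ appears with nonzero coefficient in an appropriate $s^R$-derivative of some $E_\al$, using the semisimplicity of $\ad_s$ to invert the relevant triangular system.
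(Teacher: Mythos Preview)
Your plan is correct and tracks the paper's proof closely. Two remarks.

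First, condition (F1) makes $\Gamma$ an integer grading, so $\g_\half=0$, $F(\g_\half)$ is trivial, and $\Pi=\Pi_1$; there are no ``$\Phi$-type'' terms in \eqref{eq;2}, only $Q_\al^W e_\bt=(f\mid[e_\bt,e_\al])$. The intertwining check then reduces to the direct evaluation $e_\al^L E_\bt=-k(f\mid[e_\bt,e_\al])$ on generators (no need for Lemma~\ref{3.1} here --- just differentiate $k(e_\bt\mid KsK^{-1})$ under left translation), together with Lemma~\ref{3.1} for the commutator $[e_\al^L,s^R]$, which matches \eqref{eq;1}. This is what you outline.

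Second, the paper handles the isomorphism more compactly than your separate surjectivity-plus-dimension-count. Since both sides are polynomial rings, it suffices to show that the induced map on cotangent spaces $d_0\Psi_k:T^*_0\operatorname{Spec}V^k(\g_0)\to T^*_{[e]}(\lGp/\Ap)$ is an isomorphism. Identifying $T^*_{[e]}(\lGp/\Ap)\cong\glau_{<0}/\abeln$ via $\kappa$, a short computation gives $d_0\Psi_k(\pd^n e_\al)=k\,\ad_s^{n+1}e_\al$, and (F2)--(F3) say precisely that iterated $\ad_s$ carries $\g_0$ bijectively onto $\glau_{<0}/\abeln$. This is exactly your ``leading symbol sweeps out the coordinates'' step in closed form; your filtered induction is not wrong, just the unrolled version of the same linear-algebra fact.
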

\begin{proof}
Since $V^k(\g_0)$ and $\C[\hom]$ are polynomial rings, it suffices to show that the linear map 
$d_0\Psi_k: T^*_0\text{Spec}V^k(\g_0) \rightarrow T^*_{[e]}\hom$ 
between the cotangent spaces is an isomorphism. By the identification $\lgp^* \cong \glau_{<0}$ and $\abelp^* \cong \abeln$ induced by $\k$, we obtain  $T_{[e]}^*\hom \cong (\lgp/\abelp)^* \cong \glau_{<0}/\abeln$. This isomorphism is given by $dz_\al \mapsto e_{\bar{\al}}$. Under this isomorphism, we have $d_0\Psi_k(\pd^ne_\al)=k\ad_s^{n+1}  e_\al$, as proved by inductive use of the $n=0$ case:
\begin{align*}
d_0\Psi_k(e_\al)&=kd(e_\al|KsK^{-1})=kd(e_\al|[\sum_{|\bt|=1}z_\bt e_\bt,s])\\
&=\sum\limits_{|\bt|=1}k([s,e_\al]|e_\bt) dz_\bt=k\sum\limits_{|\bt|=1}([s,e_\al]|e_\bt) e_{\bar{\bt}}\\
&=k[s,e_\al].
\end{align*} 
It follows from (F2) and (F3) that $\Psi_k$ is an isomorphism.

To show \eqref{intertwins}, it suffices to show 
$$u_\al^L E_\bt=(f|[e_\bt, e_\al]),\ [u_\al^L,s^R]=\frac{1}{k}\sum_{\begin{subarray}{c}\rho\in I\sqcup \Delta_0\\ \gam\in[\al]\end{subarray}} c_{\al,\rho}^\gam E_{\bar{\rho}} u_\gam^L,\quad 
\al\in \Pi_1,\ \bt\in I\sqcup \Delta_0,$$
where $u_\al=-\frac{1}{k}e_\al$ by \eqref{eq;1}, \eqref{eq;2}.
For the first one, we have
\begin{align*}
u_\al^L E_\bt&= \text{the}\ \ep \text{-linear term of}\ k(e_\bt|\e^{-\ep e_\al/k}K s K^{-1}\e^{\ep e_\al/k})\\
&=k(e_\bt|[-\frac{1}{k}e_\al,KsK^{-1}])=([e_\bt,e_\al]|KsK^{-1})\\
&=([e_\bt,e_\al]|s)=([e_\bt,e_\al]|f).
\end{align*}
The second one follows from Lemma \ref{3.1} since
$$[u_\al^L,s^R]=[u_\al,(K s K^{-1})_0]_+^L=\frac{-1}{k^2}\sum\limits_{\begin{subarray}{c}\rho\in I\sqcup \Delta_0\\ \gam\in[\al]\end{subarray}} c_{\al,\rho}^\gam E_{\bar{\rho}} e_\gam^L=\frac{1}{k}\sum_{\begin{subarray}{c}\rho\in I\sqcup \Delta_0\\ \gam\in[\al]\end{subarray}} c_{\al,\rho}^\gam E_{\bar{\rho}} u_\gam^L.$$
\end{proof}
Note that the Lie subalgebra $\g_+=\oplus_{j>0}\g_j$ is generated by the subspace $\oplus_{\al\in\Pi}\g_\al$. Let $G_+\subset \lGp$ denote the closed subgroup whose Lie algebra is $\g_+$. 
\begin{cor}\label{doublecoset}
The isomorphism $\Psi_k$ restricts to an isomorphism  of differential algebras $\W^k(\g,f)\cong \C[\dcoset]$.
\end{cor}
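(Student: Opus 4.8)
The plan is to combine Theorem~\ref{main1} (which realizes $\W^k(\g,f)$ as the joint kernel $\bigcap_{\al\in\Pi}\Ker(Q_\al^W)$ inside $V^k(\g_0)\otimes F(\g_\half)$) with Theorem~\ref{main2} (which gives the isomorphism $\Psi_k$ and the intertwining relation $\Psi_k Q_\al^W\Psi_k^{-1}=-\tfrac1k e_\al^L$). Applying $\Psi_k$ to the isomorphism of Theorem~\ref{main1}, we get
\begin{equation*}
\W^k(\g,f)\cong \bigcap_{\al\in\Pi}\Ker\Bigl(e_\al^L:\C[\lGp/\Ap]\to\C[\lGp/\Ap]\Bigr),
\end{equation*}
so the task reduces to identifying this joint kernel of left-invariant-vector-field derivations with $\C[\dcoset]$. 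First I would recall that $\C[\dcoset]$ is by definition the subalgebra of $G_+$-invariants in $\C[\lGp/\Ap]$ for the left $G_+$-action, and that a regular function on the prounipotent group quotient is $G_+$-invariant precisely when it is killed by every $\phi^L$ for $\phi\in\g_+$ (the Lie algebra of $G_+$); this is the infinitesimal criterion for invariance under a connected (pro)algebraic group, valid here because $G_+$ is prounipotent so $\exp:\g_+\to G_+$ is an isomorphism of schemes and orbits are swept out by one-parameter subgroups.

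The remaining point is that $\bigcap_{\al\in\Pi}\Ker(e_\al^L)=\bigcap_{\phi\in\g_+}\Ker(\phi^L)$. One inclusion is trivial since $\{e_\al\}_{\al\in\Pi}\subset\g_+$. For the reverse, I would use that, as noted in the excerpt just before the corollary, $\g_+=\oplus_{j>0}\g_j$ is generated as a Lie algebra by $\oplus_{\al\in\Pi}\g_\al$ (the $\Pi$-root vectors, indecomposable in $\Delta_{>0}$), together with the fact that $X\mapsto X^L$ is a Lie algebra homomorphism $\lgp\to\Der(\C[\lGp/\Ap])$: a bracket $[X,Y]^L=[X^L,Y^L]$ annihilates any function annihilated by both $X^L$ and $Y^L$, so $\bigcap_{\phi\in S}\Ker(\phi^L)=\bigcap_{\phi\in\langle S\rangle_{\mathrm{Lie}}}\Ker(\phi^L)$ for any generating set $S$. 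Taking $S=\{e_\al\mid\al\in\Pi\}$ gives the claim. Hence the joint kernel over $\Pi$ equals the joint kernel over all of $\g_+$, which equals $\C[\dcoset]$.

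Finally I would check that $\Psi_k$ restricts to this subalgebra as an isomorphism of \emph{differential} algebras, not merely of commutative algebras: $\Psi_k$ is a differential algebra isomorphism by Theorem~\ref{main2}, the differential on the source side being $\{d^{\mathrm{cl}}_{(0)}$-compatible$\}$—more precisely $\W^k(\g,f)$ carries the derivation $\pd$ and $\C[\lGp/\Ap]$ carries $s^R$, and $\Psi_k\circ\pd=s^R\circ\Psi_k$—so it suffices that $s^R$ preserves the subalgebra $\C[\dcoset]$. This holds because $s\in\abeln$ and the right $\An$-action commutes with the left $\lGp$-action (in particular with the left $G_+$-action) on $\lGp/\Ap$; concretely $[e_\al^L,s^R]$ lies in the span of the $e_\gam^L$, $\gam\in[\al]\subset\Delta_{>0}\subset\g_+$ (this is exactly the bracket relation established inside the proof of Theorem~\ref{main2}), so $s^R$ maps $\bigcap_{\phi\in\g_+}\Ker(\phi^L)$ into itself. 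The main obstacle, such as it is, is purely a matter of care rather than difficulty: justifying the infinitesimal-invariance criterion on the ind-/pro-scheme $\lGp/\Ap$ and confirming that "Lie-generated by $\oplus_{\al\in\Pi}\g_\al$" is exactly what lets one pass from the finite screening set to the full group of invariants; both are handled by the prounipotence of $G_+$ and the homomorphism property of $X\mapsto X^L$ already recorded in the text.
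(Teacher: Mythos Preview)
Your proposal is correct and follows essentially the same approach as the paper's proof. The one point you should make explicit is that condition (F1)---the $\Z$-grading hypothesis---forces $\g_\half=0$, so $F(\g_\half)=\C$ and $\Pi=\Pi_1$; this is precisely what lets you apply $\Psi_k$ (which is defined only on $V^k(\g_0)$, not on $V^k(\g_0)\otimes F(\g_\half)$) to the joint kernel of Theorem~\ref{main1}, and it is the role of (F1) cited in the paper's one-line proof.
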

\begin{proof}
The claim follows from Theorem \ref{main2} since 
$$\W^k(\g,f)\cong \bigcap_{\al\in\Pi_1} \Ker\bigl(Q_\al^W:V^k(\g_0)\rightarrow V^k(\g_0)\bigr),$$
by Theorem \ref{main1} and (F1).
\end{proof}
\subsection{Mutually commutative derivations on $\W^k(\g,f)$}
By (F3), we have a Lie algebra homomorphism $\xi^R:\abeln \rightarrow \Der(\C[\hom]),\ (a \mapsto a^R)$.
\begin{proposition}
\sl
The action $\xi^R$ preserves $\C[\dcoset]\subset\C[\hom]$. 
\end{proposition}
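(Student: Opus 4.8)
The plan is to show that $\C[\dcoset]$ is preserved by the derivations $a^R$ for $a\in\abeln$ by characterizing $\C[\dcoset]$ intrinsically inside $\C[\hom]$ as the subalgebra annihilated by the $e_\al^L$, $(\al\in\Pi)$, and then commuting $a^R$ past these operators. Concretely, by Corollary \ref{doublecoset} (via $\Psi_k$) together with Theorem \ref{main1} and (F1), we have
$$\C[\dcoset] \cong \W^k(\g,f) \cong \bigcap_{\al\in\Pi}\Ker\bigl(e_\al^L: \C[\hom]\rightarrow \C[\hom]\bigr),$$
where the last identification uses \eqref{intertwins}. In fact the right-hand side is exactly the subalgebra of $\C[\hom]$ killed by all of $\g_+^L$, since $\g_+$ is generated by $\oplus_{\al\in\Pi}\g_\al$ and $X\mapsto X^L$ is a Lie algebra homomorphism; and that subalgebra is precisely $\C[G_+\backslash\lGp/\Ap]$ because $G_+$ is the closed subgroup with Lie algebra $\g_+$ and $\lGp$ is prounipotent (so $G_+$-invariance of a regular function is detected by the infinitesimal action). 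So it suffices to check that $a^R$ preserves $\bigcap_{\al\in\Pi}\Ker(e_\al^L)$.

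The key step is then the commutation relation between the left and right infinitesimal actions, which is Lemma \ref{3.1}: for $u\in\lgp$ and $v\in\glau$ we have $[u^L,v^R]=[u,(KvK^{-1})_-]_+^L$. Taking $u=e_\al$ with $\al\in\Pi$ and $v=a\in\abeln$, we get $[e_\al^L,a^R]=[e_\al,(KaK^{-1})_-]_+^L$, which is again an element of $\g_{>0}^L=\lgp^L$ applied in the left action — more precisely, since $\deg(e_\al)>0$ and left-multiplication preserves the grading, $[e_\al,(KaK^{-1})_-]_+$ lies in $\lgp$, in fact in $\oplus_{j>0}(\lgp)_j$. Thus for $F\in\bigcap_{\al\in\Pi}\Ker(e_\al^L)$, one computes $e_\al^L(a^R F) = a^R(e_\al^L F) + [e_\al^L,a^R]F = [e_\al,(KaK^{-1})_-]_+^L F$, and the claim reduces to showing this last term vanishes, i.e. that $[e_\al,(KaK^{-1})_-]_+^L$ annihilates the same intersection. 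Since $\bigcap_{\al\in\Pi}\Ker(e_\al^L)$ equals $\Ker(\g_+^L) = \C[G_+\backslash\lGp/\Ap]$ as noted above, and $[e_\al,(KaK^{-1})_-]_+ \in \g_{>0}$...

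The main obstacle is precisely this last point: $[e_\al,(KaK^{-1})_-]_+$ is an element of $\lgp$ but a priori only of $\lgp$, not of the \emph{finite-dimensional} $\g_+\subset\lgp$, so it need not lie in the span of iterated brackets of the $e_\al$'s, and therefore one cannot conclude immediately that it kills $\C[G_+\backslash\lGp/\Ap]$. The resolution I would pursue is to work instead with the larger invariance: note that $\C[\dcoset]\cong\W^k(\g,f)\hookrightarrow V^k(\g_0)\otimes F(\g_\half)$, or more cleanly, observe that $a^R$ and $e_\al^L$ commute up to a left-action term lying in $\lgp$, and that all of $\lgp^L$ — hence in particular $[e_\al,(KaK^{-1})_-]_+^L$ — preserves $\C[\lGn\backslash\lG/\Ap]$ and its open subscheme structure; but the sharper statement needed is that the intersection is $\lgp^L$-invariant as a \emph{submodule}, not just that $\g_+^L$-invariants form a subalgebra. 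A cleaner route, which I expect to be the intended one, is geometric: the proposition follows directly from the definitions once one checks that the right $\An$-action on $\hom$ descends to the double quotient $\dcoset$, i.e. that left $G_+$-translation and right $\An$-translation commute as maps $\lGp\to\lGp$ (equivalently on $\lGn\backslash\lG$), which is immediate since these are left- and right-multiplications on the group and hence automatically commute; the subtlety is only that one must verify the $G_+$-orbits and $\An$-orbits are compatible with the open embedding $\lGp/\Ap\hookrightarrow\lGn\backslash\lG/\Ap$, and this is where I would spend the real effort — showing $G_+\cdot(\lGp/\Ap)\subset\lGp/\Ap$ inside $\lGn\backslash\lG/\Ap$, which holds because $G_+\subset\lGp$. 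With that, $a^R$ visibly maps $G_+$-invariant functions to $G_+$-invariant functions, proving the proposition.
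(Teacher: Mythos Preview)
Your first approach via Lemma~\ref{3.1} is exactly the paper's approach, and you stopped one line short of completing it. The ``obstacle'' you identify dissolves once you use the grading together with (F1). Under (F1) the grading $\Gamma$ is integral, so $\Pi=\Pi_1\subset\Delta_1$; in fact $\Pi_1=\Delta_1$ since no degree~$1$ root can decompose in $\Delta_{>0}$. Hence for $\al\in\Pi$ the element $e_\al$ has degree~$1$. Now $(KaK^{-1})_-$ has degree~$\le 0$, so $[e_\al,(KaK^{-1})_-]$ has degree~$\le 1$, and therefore
\[
[e_\al,(KaK^{-1})_-]_+=[e_\al,(KaK^{-1})_0]=\sum_{\bt\in I\sqcup\Delta_0}F_\bt\,[e_\al,e_\bt]
\]
for some $F_\bt\in\C[\hom]$. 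Each $[e_\al,e_\bt]$ lies in $\g_1=\bigoplus_{\gam\in\Pi_1}\g_\gam$, so $[e_\al^L,a^R]$ is a $\C[\hom]$-linear combination of $e_\gam^L$ with $\gam\in\Pi_1$. Applied to $F\in\bigcap_{\gam\in\Pi_1}\Ker(e_\gam^L)$ this gives $e_\al^L(a^RF)=0$, which is precisely the paper's one-line computation.

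By contrast, your fallback ``geometric'' argument has a genuine gap. The right $\An$-action on $\lGp$ is \emph{not} right multiplication on the group $\lGp$: it is defined only after passing to $\lGn\backslash\lG$, i.e.\ $g\cdot a$ is the $\lGp$-representative of the class $[ga]$. Concretely $g\cdot a=\ell\,ga$ for some $\ell\in\lGn$ depending on $g$ and $a$, and then $h^{-1}(g\cdot a)=h^{-1}\ell\,ga$ need not represent the same class as $(h^{-1}g)a$ because $h^{-1}\ell h\notin\lGn$ in general for $h\in G_+$. So left $G_+$-multiplication and this ``right $\An$-action'' on $\lGp$ do \emph{not} commute on the nose, and the descent to $\dcoset$ cannot be read off from commuting group multiplications. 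The infinitesimal commutator is controlled precisely by Lemma~\ref{3.1}, which brings you back to the computation above.
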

\begin{proof}
By Lemma \ref{3.1}, we have, for $e_\al\in\g$, $(\al\in\Pi_1)$, and $a\in \abeln$, 
$$[e_\al^L,a^R]=[e_\al,(K a K^{-1})_-]_+^L=[e_\al,(K a K^{-1})_0]^L=\sum\limits_{\bt \in I\sqcup \Delta_0} F_\bt[e_\al,e_\bt]^L,$$
for some polynomials $F_\bt\in \C[\hom]$. 
Hence, for $G\in \C[\dcoset]$, we have
$$e_\al^L(a^R G)=[e_\al^L,a^R] G=\sum\limits_{\bt \in I\sqcup\Delta_0} F_\bt[e_\al,e_\bt]^L G=0$$.
\end{proof}
Since $\abeln$ is abelian, its image $\xi^R(\abeln)\subset \Der\C[\dcoset]$ is also abelian. By Theorem \ref{main2}, $\xi^R(\abeln)$ is identified with a set of commutative derivations of $\W^k(\g,f)$, which we denote by $\mathcal{H}^k(\g,f)$. In the next section, we will prove that $\mathcal{H}^k(\g,f)$ is an integrable Hamiltonian hierarchy associated with the classical affine $\W$-algebra $\W^k(\g,f)$.
\section{$\mathcal{H}^k(\g,f)$ as an integrable Hamiltonian hierarchy}
In this section, we always assume that the condition (F)  holds and $k\in\C$ is generic, and identify $V^k(\g_0)$ with $\C[\hom]$ by $\Psi_k$, (Theorem \ref{main2}).
\subsection{Construction of Hamiltonians}
Let $\Omega_\dr(\lGp)=\C[\lGp]\otimes \bigwedge{\lgp^*}$ denote the algebraic de Rham complex of $\lGp$. 
Here $\lgp^*$ is the vector space dual to the space $\lgp$ of the right invariant vector fields $\lgp^L$ on $\lGp$. We take a basis $\vp^\al$, $(\al\in\afDp)$, of $\lgp^*$ so that $\vp^\al(e_\bt)=\delta_{\al,\bt}$, $(\al,\bt\in \afDp)$, holds. 
Note that the complex $\Omega_\dr(\lGp)$ coincides with the Chevalley-Eilenberg complex of the Lie algebra $\lgp$ with coefficients in $(\C[\lGp],\xi^L)$.
Similarly, let $\Omega_\dr(\Ap)=\C[\Ap]\otimes \bigwedge{\abelp^*}$ denote the algebraic de Rham complex of $\Ap$. Then the inclusion $\Ap \hookrightarrow \lGp$ induces the projection $\Omega_\dr(\lGp) \rightarrow \Omega_\dr(\Ap)$. It restricts to $\abelp^R$-invariant subcomplexes: 
\begin{equation}
\label{eq;4}
\pi: \C[\hom]\otimes \bigwedge{\lgp^*} \rightarrow \C \otimes \bigwedge{\abelp^*}.
\end{equation}
As a $\lgp$-module, $\C[\hom]$ is isomorphic to the $\lgp$-module $\text{Coind}_{\abelp}^{\lgp}\C$ coinduced from the trivial $\abelp$-module $\C$. Then $\pi$ induces an isomorphism
$$H^*(\lgp; \C[\hom]) \cong H^*\bigl(\lgp; \text{Coind}_{\abelp}^{\lgp} \C\bigr) \cong H^*(\abelp;\C)$$
by Shapiro's lemma, (cf. \cite{fuks86}). The right hand side is isomorphic to $\bigwedge \abelp^*$ since $\abelp$ is abelian. The action $\xi^R$ of $\abeln$ induces $\abeln$-actions on the complexes $\Omega_\dr(\lGp)$, $\Omega_\dr(\Ap)$, which  commute with $\pi$. Since $\abel$ is abelian, $\abeln$ acts on $\Omega_\dr(\Ap)$ trivially. Thus $\abeln$ acts on the cohomology $H^*(\lgp; \C[\hom])$ trivially. In particular, $s$ acts on $H^*(\lgp; \C[\hom])$ trivially. 
By abuse of notation, we write the above $\abeln$-actions by $\xi^R$. 

Consider the double complex $\mathscr{C}$
$$\begin{CD}
\C @>\iota>> \Omega_\dr(\lGp)  @>\pm s^R>> \Omega_\dr(\lGp)  @>\ep>> \C.
\end{CD}$$
Here $\iota$ is the unit morphism and $\ep$ the counit morphism.
It has the following shape:

$$\begin{CD}
@.         \C_{(0,2)}    \\
@.       @A\ep AA    \\
0_{(-1,1)} @>>> \C[\hom]_{(0,1)} @>d>> \C[\hom] \otimes \glau^*_{>0\ (1,1)} @>d>>\cdots\\
@.       @A \pd=s^R AA    @A -s^R AA  \\
0_{(-1,0)} @>>> \C[\hom]_{(0,0)} @>d>> \C[\hom] \otimes \glau_{>0\ (1,0)} @>d>>\cdots,\\
@.       @A\iota AA \\
@.         \C_{(0,-1)}
\end{CD}$$
where the subscript $(i,j)$ denotes the bidegree of $\mathscr{C}$. Then the calculation of the cohomology $H^*(\mathscr{C})$ via spectral sequences gives the isomorphisms
\begin{equation}
\abelp^* \cong H^1(\mathscr{C}) \cong \Ker\Bigl(d: \frac{\C[\hom]}{\C \oplus \Im(s^R)} \rightarrow \frac{\C[\hom] \otimes \lgp^*}{\Im(s^R)}\Bigr).
\end{equation}
By Theorem \ref{main2}, we have an isomorphism $\C[\hom]/\C \oplus \Im(s^R)\cong \Lie(V^k(\g_0))/\C$ as vector spaces.
Since $V^k(\g_0)$ is an algebra of differential polynomials, $\int: \C\rightarrow \Lie\bigl(V^k(\g_0)\bigr)$ is injective.
Thus we may lift an element $\int f\in \Lie(V^k(\g_0))/\C$ to $\int \tilde{f}\in \Lie(V^k(\g_0))/\C$ which is without the constant term $\tilde f(0)=0$.
Identifying $\abeln$ with $\abelp^*$ by $\k$, we obtain an isomorphism 
\begin{equation*}
\abeln\cong \abelp^* \cong \left\{\int f\in\Lie(V^k(\g_0))| f(0)=0,\ d\int f=0\right\},\quad a\mapsto \int H(a).
\end{equation*}
The statement of the following proposition makes sense by Lemma \ref{2.3}.
\begin{proposition}
\label{4.1}
\sl
The image $\int H(\abeln)$ lies in $\Lie(\W^k(\g,f))$.
\end{proposition}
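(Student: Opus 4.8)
The plan is to deduce the statement from Lemma~\ref{2.3} together with the exactness of the screening complex. Since (F1) gives $\g_\half=0$, hence $F(\g_\half)=\C$, the complex computing $\W^k(\g,f)$ collapses to the screening complex $(C^\bullet,\dchz)$, with $C^0=V^k(\g_0)$, $C^n=\bigoplus_{l(w)=n}\L(w^{-1}*0)$ and $\dchz|_{C^0}=\sum_{i\in I\setminus I_0}Q_i^W$; one has $H^n(C^\bullet,\dchz)=\delta_{n,0}\,\W^k(\g,f)$, and $\dchz$ commutes with $\pd$ because it is built from intertwining operators. The first step is the observation that $\pd\colon\L(w^{-1}*0)\to\L(w^{-1}*0)$ is \emph{injective} for every $w\neq e$: on the generating subspace one has $\pd(1\otimes m)=\tfrac1k\sum_{\gam\in I\sqcup\Delta_0}e_{\bar\gam}\otimes(e_\gam\cdot m)$, which preserves the grading $\deg u_i^{(n)}=n$ of the $V^k(\g_0)$-factor, while the remaining part of $\pd$ strictly raises it; so a $\pd$-closed element would have to lie in $1\otimes L_0(w^{-1}*0)$ and be annihilated by all of $\g_0$, which is impossible since $L_0(w^{-1}*0)$ is irreducible with highest weight $w^{-1}*0\neq 0$. (For $l(w)=1$ this is exactly what is used in the proof of Lemma~\ref{2.3}.)

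Next I would prove that, under the embedding $j_*$,
$$\Lie(\W^k(\g,f))=\Bigl\{\textstyle\int f\in\Lie\bigl(V^k(\g_0)\bigr)\ \Big|\ Q_i^W f\in\pd\bigl(\L(-\al_i)\otimes F(\g_\half)\bigr)\ \text{for all }i\in I\setminus I_0\Bigr\};$$
note the right-hand side is well defined, since $\pd$ and $Q_i^W$ commute. The inclusion $\subseteq$ is clear because $Q_i^W$ kills $\W^k(\g,f)$. For $\supseteq$ one takes $f$ with $\dchz f=\pd\eta$, $\eta\in C^1$; then $\pd(\dchz\eta)=\dchz(\pd\eta)=\dchz^2 f=0$, so $\dchz\eta\in\Ker(\pd|_{C^2})=0$ by the first step, whence $\eta\in\Ker(\dchz|_{C^1})=\Im(\dchz|_{C^0})$ by exactness; writing $\eta=\dchz\zeta$ gives $\dchz(f-\pd\zeta)=0$, so $f-\pd\zeta\in\W^k(\g,f)$ and $\int f=\int(f-\pd\zeta)\in j_*(\Lie(\W^k(\g,f)))$.

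It then remains to verify that $f=H(a)$ lies in that set, i.e.\ $Q_i^W H(a)\in\pd(\L(-\al_i)\otimes F(\g_\half))$ for each $i$, and this should come from the defining property of $H(a)$. By construction $(H(a),\omega)$ is a cocycle of the double complex $\mathscr{C}$, so, transported through $\Psi_k$, $dH(a)=s^R\omega$ in the Chevalley--Eilenberg complex $\C[\lGp/\Ap]\otimes\lgp^*$ for a $1$-cochain $\omega$. Restricting this identity to the $\g_+$-directions and using $\Psi_k Q_\al^W\Psi_k^{-1}=-\tfrac1k e_\al^L$ for $\al\in\Delta_1=\bigsqcup_i[\al_i]$ (Theorem~\ref{main2}) together with \eqref{eq;3} — which is precisely the formula for $\pd$ on $L_0(-\al_i)$ — the $\Delta_1$-component of $dH(a)=s^R\omega$ should read $\sum_i Q_i^W H(a)=\pd(\text{a }1\text{-cochain in }C^1)$. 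Combined with the previous paragraph, this would give $\int H(a)\in\Lie(\W^k(\g,f))$, as claimed.

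The step I expect to be the main obstacle is exactly this last comparison. The complex used to construct $H(a)$ has cochains $\vp^\al$ indexed by \emph{all} of $\afDp$, including the directions carrying powers of $t$, whereas the screening complex only involves the finite root system $\Delta_{>0}$ of $\g$; one must check that restricting $dH(a)=s^R\omega$ to the $\g_+$-directions (and then to its $\Delta_1$-part) does not spoil the $\pd$-exactness. This should hinge on the fact that $s=f+yt^{-1}$ acts on $\g_{>0}$ through $\ad_f$ alone, since $[\g_d,\g_{>0}]\subseteq\g_{>d}=0$, so that the $t$-directions decouple from the lowest part, together with the compatibility built into Theorem~\ref{main2} and \eqref{eq;3} between the grading on $V^k(\g_0)$ transported by $\Psi_k$ and the $\ad_x$-grading.
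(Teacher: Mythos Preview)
Your proposal is correct and follows essentially the same route as the paper's proof. The paper also restricts the identity $dH(a)=s^R A$ to its $\Pi_1=\Delta_1$ component and, using Lemma~\ref{4.2}, recognises that component as $\pd\bigl(\sum_{\al\in\Pi_1}F_\al\otimes\vp^\al\bigr)$ in $\bigoplus_i\L(-\al_i)$; your steps~1--2 make explicit the implication from ``$Q_i^WH(a)\in\pd\,\L(-\al_i)$'' to ``$\int H(a)\in\Lie(\W^k(\g,f))$'', which the paper states in one line.

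Regarding the obstacle you flag in the last paragraph: it is resolved exactly as the paper's computation shows, and the reason is even simpler than the $\ad_s|_{\g_{>0}}=\ad_f|_{\g_{>0}}$ observation you give. From Lemma~\ref{4.2}, the coefficient of $\vp^\al$ in $s^R\vp^\bt$ is $\tfrac1k\sum_\gam E_{\bar\gam}c_{\al,\gam}^\bt$ (forcing $|\bt|=|\al|$) plus $c_{\al,s}^\bt$ (forcing $|\bt|=|\al|-1$). For $\al\in\Delta_1$ the second alternative would need $|\bt|=0$, impossible since $\bt\in\afDp$. Hence the $\Delta_1$-part of $s^R\omega$ depends only on the $\Delta_1$-part of $\omega$, and on that part $s^R$ agrees with the translation $\pd$ of $\bigoplus_i\L(-\al_i)$ via \eqref{eq;3}; no decoupling of $t$-directions beyond this degree bookkeeping is needed.
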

\begin{proof}
We may assume that the basis $\{e_\al\}_{\al\in\afDp}$ of $\lgp$ respects the decomposition $\lgp=\Im(\ad_s)\oplus \Ker(\ad_s)$. 
For $a\in \abeln$, the element $\int H(a)\in \Lie(V^k(\g_0))$ satisfies, by construction, $d H(a)=s^R A$ for some element $A=\sum_{\al\in\afDp}F_\al \otimes \vp^\al \in \C[\hom]\otimes\lgp^*$.
Then, 
\begin{align*}
d H(a)&=s^R A= s^R\sum\limits_{\al\in\afDp}F_\al\otimes \vp^\al\\
&=\sum\limits_{\al\in\afDp} \pd(F_\al) \otimes \vp^\al +\sum\limits_{\al\in\afDp} F_\al \otimes s^R(\vp^\al)\\
&=\sum\limits_{\al\in\Pi_1}
\left(\pd F_\al +\sum_{\begin{subarray}{c}\bt\in\Pi_1\\ \gam\in \sqcup \Delta_0\end{subarray}}F_\bt c_{\al,\gam}^{\bt}E_{\bar{\gam}} \right) 
\otimes \vp^\al+ \sum\limits_{\al\in\afDp\backslash\Pi_1}\widetilde{F_\al^n}\otimes \vp^\al,
\end{align*}
for some element $\widetilde{F_\al^n}\in\C[\hom]$.
In the last equality, we have used Lemma \ref{4.2} below. By the identification $\C[\hom]\cong V^k(\g_0)$ and $\C[\hom] \vp^\al \cong \L(-\al)$, $(\al\in \Pi_1)$, we obtain
$$\sum\limits_{\al\in\Pi_1}\left(\pd F_\al +\sum_{\begin{subarray}{c}\bt\in\Pi_1\\ \gam\in I\sqcup\Delta_0\end{subarray}}F_\bt c_{\al,\gam}^{\bt}E_{\bar{\gam}} \right) \otimes \vp^\al= \pd\left(\sum\limits_{\al\in\Pi_1}F_\al \otimes \vp^\al\right)=0\in \bigoplus_{\al\in \Pi_1}\Lie\bigl(\L(-\al)\bigr).$$
On the other hand, by the definition of $d$, we have 
$d H(a)= \sum\limits_{\al\in\afDp} e_\al^L H(a) \otimes \vp^\al$. 
It follows that $e_\al^L \int H(a)=0$ for all $\al\in\Pi_1$, which implies $\int H(a)\in\Lie(\W^k(\g,f))$ by Theorem \ref{main2}. 
\end{proof}
\begin{lemma}
\label{4.2}
\sl
The following formula holds
\begin{equation}\label{eq;4.2}
s^R \vp^\al=\frac{1}{k}\sum_{\begin{subarray}{c}\bt\in\afDp\\ \gam\in I\sqcup \Delta_0\end{subarray}} E_{\bar{\gam}} c_{\bt,\gam}^\al \vp^\bt+\sum_{\bt\in \afDp} c_{\bt,s}^\al \vp^\bt,\quad \al\in \afDp.
\end{equation}
\end{lemma}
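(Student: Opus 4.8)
The plan is to use that the action of $s^R$ on the de Rham complex $\Omega_\dr(\lGp)$ extending $\xi^R$ is by Lie derivative, so that the $1$-form $s^R\vp^\al$ is pinned down by the action of $s^R$ on $\C[\lGp]$ together with the Leibniz rule for the pairing between $1$-forms and vector fields. Concretely, since $\lGp$ is prounipotent the right-invariant vector fields $e_\bt^L$, $(\bt\in\afDp)$, form a global frame, so a $1$-form is determined by its values on them, and one has $(s^R\vp^\al)(e_\bt^L)=s^R\bigl(\vp^\al(e_\bt^L)\bigr)-\vp^\al\bigl([s^R,e_\bt^L]\bigr)=-\vp^\al\bigl([s^R,e_\bt^L]\bigr)$, the first term vanishing because $\vp^\al(e_\bt^L)=\delta_{\al,\bt}$ is constant. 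Thus the whole computation reduces to the commutator $[s^R,e_\bt^L]$, which is computed by Lemma \ref{3.1}.

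Carrying this out: Lemma \ref{3.1} gives $[e_\bt^L,s^R]=[e_\bt,(KsK^{-1})_-]_+^L$, hence $(s^R\vp^\al)(e_\bt^L)=\vp^\al\bigl([e_\bt,(KsK^{-1})_-]_+^L\bigr)$, which is the coefficient of $e_\al$ in $[e_\bt,(KsK^{-1})_-]_+\in\C[\lGp]\otimes\lgp$. Next I would determine $(KsK^{-1})_-$: by (F1) the element $s=f+yt^{-1}$ is homogeneous of $\gpoly$-degree $-1$, and conjugation by $K=\exp(\sum_{\al\in\afDp}z_\al e_\al)$ only raises the degree, so $KsK^{-1}$ has components in degrees $\geq-1$ and its degree $-1$ part equals $s$; therefore $(KsK^{-1})_-=s+(KsK^{-1})_0$. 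Writing $(KsK^{-1})_0=\tfrac1k\sum_{\gam\in I\sqcup\Delta_0}E_{\bgam}e_\gam$, which holds because $E_{\bgam}=k(e_{\bgam}|KsK^{-1})=k(e_{\bgam}|(KsK^{-1})_0)$ for $\gam$ of degree zero, and extracting the $e_\al$-component of $[e_\bt,s]+[e_\bt,(KsK^{-1})_0]$, one gets $(s^R\vp^\al)(e_\bt^L)=c_{\bt,s}^\al+\tfrac1k\sum_{\gam\in I\sqcup\Delta_0}E_{\bgam}c_{\bt,\gam}^\al$. Summing $(s^R\vp^\al)(e_\bt^L)\vp^\bt$ over $\bt\in\afDp$ then yields \eqref{eq;4.2}; in particular $s^R\vp^\al$ indeed lies in $\C[\hom]\otimes\lgp^*$.

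Several routine points are to be verified en route: that the pro-algebraic de Rham calculus allows the identity $(\mathcal{L}_Y\omega)(X)=Y(\omega(X))-\omega([Y,X])$ to be used, that the $s^R$ occurring in the double complex $\mathscr{C}$ is this Lie derivative, and that the truncations $(-)_+,(-)_0,(-)_-$ of $KsK^{-1}$ make sense because each graded piece is a polynomial of bounded degree in the $z_\al$; one must also keep track of the sign in Lemma \ref{3.1}. The step I expect to carry the real content, and to be the main obstacle, is the identification $(KsK^{-1})_-=s+(KsK^{-1})_0$: it is precisely this degree computation that collapses the a priori infinitely many contributions into the two terms of \eqref{eq;4.2} --- the constant term coming from $[e_\bt,s]$ and the degree-zero functions $E_{\bgam}$ coming from $(KsK^{-1})_0$.
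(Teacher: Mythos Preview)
Your proposal is correct and follows essentially the same route as the paper: compute $[s^R,e_\bt^L]$ via Lemma \ref{3.1}, identify $(KsK^{-1})_-=s+(KsK^{-1})_0=\tfrac1k\sum_\gam E_{\bgam}e_\gam+s$, and then read off $(s^R\vp^\al)(e_\bt^L)=-\vp^\al([s^R,e_\bt^L])$ using the Lie-derivative formula for the dual frame. The paper's proof is the same two-line argument, only less explicit about the degree reasoning that gives $(KsK^{-1})_-=s+(KsK^{-1})_0$.
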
 
\begin{proof}
By Lemma \ref{3.1}, we have 
\begin{eqnarray}
\label{eq;5}
[s^R, e_\al^L]=-[e_\al, (K s K^{-1})_-]_+^L=-\frac{1}{k}\sum_{\bt\in I\sqcup \Delta_0} E_{\bar{\bt}} [e_\al, e_\bt]^L - [e_\al, s]_+^L.
\end{eqnarray}
Then, from the definition of $\lgp^*$, we have
$$\left(s^R \vp^\al\right) (e_\bt)=-\vp^\al(\pd e_\bt)=\frac{1}{k}\sum_{\gam\in I\sqcup \Delta_0}E_{\bar{\gam}}c_{\bt,\gam}^\al+c_{\bt,s}^\al.$$
The claim \eqref{eq;4.2} follows from it immediately.
\end{proof}
\subsection{Poisson vertex superalgebra structure on $\C[\hom] \otimes \bigwedge \lgp^*$}
We extend the Poisson vertex algebra structure on $\C[\hom]$ given by Theorem \ref{main2}, to the whole differential superalgebra $(\C[\hom] \otimes \bigwedge \lgp^*,s^R)$.
\begin{proposition}
\label{4.3}
\sl
The differential superalgebra $(\C[\hom]\otimes\bigwedge\lgp^*,s^R)$ admits a unique Poisson vertex superalgebra structure, which satisfies
\begin{enumerate}
\item $\{ u_\lam v\}= [u, v]+k(u|v) \lam,\quad (u,v\in \g_0)$,
\item $\{\vp^\al_\lam\vp^\bt\}=0,\quad (\vp^\al, \vp^\bt \in \glau_1^*\cup \abelp^*)$,
\item $\{u_\lam \vp^\al\}=-\sum c_{u,\bt}^\al \vp^\bt,\quad (\vp^\al\in \glau_1^*\cup \abelp^*,u\in \g_0)$.
\end{enumerate}
It satisfies
\begin{eqnarray}
\label{eq;6}
\{u_\lam \vp^\al\}=-\sum c_{u,\bt}^\al \vp^\bt,\quad (\vp^\al\in \glau^* ,u\in \g_0).
\end{eqnarray}
\end{proposition}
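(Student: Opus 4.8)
The plan is to construct the desired $\lambda$-bracket on $\C[\hom]\otimes\bigwedge\lgp^*$ by identifying this superalgebra as an algebra of differential polynomials and invoking Theorem \ref{1.2}. First I would recognize that, as a differential superalgebra, $(\C[\hom]\otimes\bigwedge\lgp^*,s^R)$ is freely generated by the even variables $e_\al$, $(\al\in I\sqcup\Delta_0)$ — using the identification $\C[\hom]\cong V^k(\g_0)$ of Theorem \ref{main2}, which makes $\C[\hom]$ an algebra of differential polynomials in these — together with the odd variables $\vp^\al$, $(\al\in\afDp)$. One must check that the $\vp^\al$ and their $s^R$-derivatives $(s^R)^n\vp^\al$ are algebraically independent over $\C[\hom]$; this follows because $s^R\vp^\al$ expressed in Lemma \ref{4.2} has, modulo $\C[\hom]$-linear combinations of the $\vp^\bt$ themselves, a ``leading term'' coming from $\sum_\bt c^\al_{\bt,s}\vp^\bt$, i.e.\ $s^R$ acts on the span of the $\vp^\bt$ by the operator $\ad_s$ shifted, which together with the grading guarantees freeness. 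Thus Remark \ref{1.1} applies: it suffices to prescribe $\{u_{i\lam}u_j\}$ on generators by (1)--(3) and check the axioms \eqref{skew} and \eqref{Jacobi} on generators.

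Next I would verify \eqref{skew} and \eqref{Jacobi} for all pairs of generators. Among the $e_\al$, $(\al\in I\sqcup\Delta_0)$, this is exactly the statement that $V^k(\g_0)$ is a Poisson vertex algebra, which we already have. For a pair $(\vp^\al,\vp^\bt)$, both sides of \eqref{skew} vanish by (2), and \eqref{Jacobi} with any third generator is immediate since the relevant brackets vanish or land in $\bigwedge\lgp^*$ where everything is central. The genuinely new checks are the mixed ones: \eqref{skew} for $(u,\vp^\al)$, which determines $\{\vp^\al{}_\lam u\}$; and \eqref{Jacobi} for triples $(u,v,\vp^\al)$ with $u,v\in\g_0$. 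The latter reduces, after stripping off the Leibniz-type terms, to the assertion that $\vp\mapsto\{u_\lam\vp\}=-\sum c^\al_{u,\bt}\vp^\bt$ defines a representation of the Lie algebra $\g_0$ on $\oplus_\al\C\vp^\al$ — which it does, being (up to sign) the coadjoint action of $\g_0$ on $\lgp^*$ restricted appropriately; the Jacobi identity for this action is just the Jacobi identity in $\g_0$ together with $\sum_\gam(c^\gam_{u,v}c^\al_{\gam,\bt}) = \sum_\rho(c^\al_{u,\rho}c^\rho_{v,\bt}-c^\al_{v,\rho}c^\rho_{u,\bt})$.

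Finally, for the last displayed assertion \eqref{eq;6}, that $\{u_\lam\vp^\al\}=-\sum c^\al_{u,\bt}\vp^\bt$ holds for all $\vp^\al\in\glau^*$ and not merely for $\vp^\al\in\glau_1^*\cup\abelp^*$: here I would argue by induction on the grading degree $|\al|$. For degrees where $\vp^\al$ is not among the chosen generators, $\vp^\al$ is a polynomial expression in $\C[\hom]$ and the generating $\vp^\bt$ via repeated application of $s^R$ (inverting the formula of Lemma \ref{4.2}); applying the sesqui-linearity axiom \eqref{sesqui} together with the fact that $\{u_\lam s^R(-)\}=s^R\{u_\lam-\}$ (because $s^R=\pd$ is the translation operator, which commutes suitably with $\lam$-brackets, combined with the already-known action of $u$ on $\C[\hom]$ via its bracket), one propagates the formula. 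The bookkeeping here is that $\{u_\lam\pd\vp\}=(\lam+\pd)\{u_\lam\vp\}$ must be reconciled with $\pd\vp$ being a $\C[\hom]$-combination of lower $\vp$'s, which works out precisely because the coadjoint action and the structure constants $c^\bullet_{\bullet,s}$ are compatible — essentially the same compatibility already encoded in \eqref{eq;1}/\eqref{eq;3}.

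I expect the main obstacle to be the verification of the mixed \eqref{Jacobi} identities, specifically keeping careful track of signs and of the Leibniz correction terms when one of the entries is an odd $\vp^\al$ and of ensuring the ``representation'' property of the $\g_0$-action on the full span $\oplus_{\al\in\afDp}\C\vp^\al$ is compatible with the differential $s^R$ — i.e.\ that the identity \eqref{eq;6} in all degrees does not create an inconsistency with the already-fixed brackets on $\C[\hom]$. Everything else is either a direct consequence of Theorem \ref{1.2} applied to the generators, of Lemma \ref{4.2}, or of the fact that $\bigwedge\lgp^*$ sits inside the new algebra as a ``square-zero'' piece on which all $\lam$-brackets among odd elements vanish.
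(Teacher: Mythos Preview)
Your overall strategy matches the paper's: reduce to Theorem~\ref{1.2} by exhibiting a set of differential polynomial generators, check \eqref{skew} and \eqref{Jacobi} on generators (the only nontrivial case being the triple $(u,v,\vp^\gam)$, which is the Jacobi identity for the coadjoint $\g_0$-action on $\lgp^*$), and then establish \eqref{eq;6} by induction on degree via Lemma~\ref{4.2}.

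However, your first paragraph contains a genuine error in the choice of odd generators. You claim that $(\C[\hom]\otimes\bigwedge\lgp^*,s^R)$ is an algebra of differential polynomials on the $e_\al$ together with \emph{all} the $\vp^\al$, $\al\in\afDp$, and that the $\vp^\al$ and their $s^R$-derivatives are algebraically independent. This is false: by Lemma~\ref{4.2}, each $s^R\vp^\al$ is already a $\C[\hom]$-linear combination of the $\vp^\bt$ themselves, so $\{\vp^\al\}_{\al\in\afDp}\cup\{s^R\vp^\al\}_{\al\in\afDp}$ is certainly not independent. Your own justification collapses: you say the ``leading term'' of $s^R\vp^\al$ is $\sum_\bt c^\al_{\bt,s}\vp^\bt$ modulo $\C[\hom]$-combinations of the $\vp^\bt$, but that leading term \emph{is} such a combination. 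The correct set of odd differential generators, as the paper uses, is $\glau_1^*\cup\abelp^*$: the formula in Lemma~\ref{4.2} shows that $\ad_s^*$ (the constant-coefficient part of $s^R$) maps $\Im(\ad_s)^*_n$ isomorphically onto $\Im(\ad_s)^*_{n+1}$, so iterating $s^R$ on $\glau_1^*$ reaches all of $\Im(\ad_s)^*_{>0}$; the remaining directions are precisely $\abelp^*$. Once you fix the generating set, the rest of your argument goes through and coincides with the paper's proof, including the inductive verification of \eqref{eq;6} via the explicit computation of $\{e_{\al\lam}\ad_s^{-1}\vp^a\}$ (where the $\lam$-linear term you worried about cancels exactly because $(e_\al\mid e_{\bar\bt})=\delta_{\al,\bt}$).
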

\begin{proof} 
It follows from Lemma \ref{4.2} that $\C[\hom] \otimes \bigwedge \lgp^*$ is an algebra of differential polynomials in the variables given by the union of bases of $\g_0$ and $\glau_1^* \cup \abelp^*$. By Theorem \ref{1.2}, it suffices to show \eqref{skew}, \eqref{Jacobi} for these variables in order to prove that  (1)-(3) define a Poisson vertex superalgebra on $\C[\hom] \otimes \bigwedge \lgp^*$. The only non-trivial ones are \eqref{Jacobi} for $\{ e_\al, e_\bt, \vp^\gam\}$. By \eqref{skew}, they reduce to the special case 
\begin{equation}\label{Jacobicheck}
\{e_{\al\lam}\{e_{\bt\mu}\vp^\gam\}\}-\{e_{\bt\mu}\{e_{\al\lam}\vp^\gam\}\}=\{\{e_{\al\lam}e_\bt\}_{\lam+\mu}\vp^\gam\}.
\end{equation}
Since it coincides with the Jacobi identity $(\coad(e_\al)\coad(e_\bt)-\coad(e_\bt)\coad(e_\al))\vp^\gam=\coad([e_\al,e_\bt])\vp^\gam$ of the coadjoint $\g_0$-action on $\lgp^*$, \eqref{Jacobicheck} holds. 

Next, we show \eqref{eq;6}.
Since $\ad_s$ is an isomorphism on $\Im(\ad_s)\subset \glau$, we have its inverse $\ad_s^{-1}$ on $\Im(\ad_s)$ and on its dual space $\Im(\ad_s)^*$. Then we have a decomposition $\glau_{>0}=\ad_s^{-1}\Im(\ad_s)_{>0}\oplus (\glau_1\cup \abelp)$. We show \eqref{eq;6} for $\vp^\al\in \Im(\ad_s)$ by induction on degree. From Lemma \ref{4.2}, we have
$$\ad_s^{-1}\vp^a= \pd \vp^a -\frac{1}{k} \sum_{\al,b} c_{b,\al}^a e_{\bal} \vp^b,\quad \vp^\al\in \Im(\ad_s)_1.$$  
Here the sum with respect to Greek letters (resp.\ Roman letters) is taken over $I\sqcup \Delta_0$ (resp.\ $\afDp$). We use the same rule below. Then, for $e_\al\in\g_0$ and $\vp^a\in \Im(\ad_s)_1$, we have 
$$\{e_{\al\lam}(\ad_s^{-1})^n\vp^a\}=-\sum\limits_{|b|=|a|} c_{\al,b}^a (\ad_s^{-1})^n\vp^b,$$
which is proved by the inductive use of the $n=1$ case:
\begin{align*}
\{e_{\al\lam}\ad_s^{-1}\vp^a\}
&= \{e_{\al,\lam}\pd \vp^a\}-\frac{1}{k} \sum_{\bt,b} c_{b,\bt}^a \{e_{\al\lam}e_{\bbt} \vp^b\}\\
&=(\pd+\lam)\{e_{\al,\lam}\pd \vp^a\}-\frac{1}{k}\sum_{\bt, b} c_{b,\bt}^a( \{e_{\al \lam}e_{\bbt}\}\vp^b+\{e_{\al\lam}\vp^b\}e_{\bbt})\\
&=-(\pd+\lam)\sum_b c_{\al b}^a \vp^b-\frac{1}{k} \sum\limits_{\al,b}
c_{b,\al}^a\left(([e_\al, e_{\bbt}]+k(e_\al|e_{\bbt})\lam) \vp^b-\sum_c c_{\al,c}^b e_{\bbt}\vp^c\right)\\
&=-\lam\left(\sum_b c_{\al,b}^a \vp^b+\sum_{\bt,b} c_{b,\bt}^a (e_\al|e_{\bbt})\vp^b\right)\\
&\qquad-\left(\sum\limits_b c_{\al,b}^a\pd \vp^b+\frac{1}{k}\sum_{\bt,b}c_{b,\bt}^a\bigl([e_\al, e_{\bbt}]\vp^b-\sum_{c}c_{\al,c}^b e_{\bbt}\vp^c\bigr)\right)\\
&=-\sum_b c_{\al,b}^a \ad_s^{-1}\vp^b.
\end{align*}
In the last equality, we used $(e_\al|e_{\bbt})=\delta_{\al,\bt}$ for the $\lam$-linear term and use Lemma \ref{4.2}, $c_{a,b}^c=c_{\bar{c},a}^{\bar{b}}$, and the Jacobi identity of the Lie bracket for the constant term.
This implies \eqref{eq;6} for $\vp^\al\in \Im(\ad_s)^*$. Then \eqref{eq;6} follows from this and (3). 
\end{proof}
The de Rham differential $d=\sum_{\al \in \afDp} e_\al^L \otimes \vp^\al$ on $\C[\hom]$ can be described in term of the Poisson vertex superalgebra structure in Proposition \ref{4.3}. 
\begin{lemma}
\label{4.4}
\sl 
The differential $d$ is determined uniquely by\\
(i) $d E_\bt=-k\sum_{\al\in\afDp}([e_\bt,s]|e_\al) \otimes \vp^\al,\quad (\bt\in I\sqcup \Delta_0)$,\\
(ii) $[d, \pd]=0$.
\end{lemma}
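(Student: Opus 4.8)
The plan is to characterize $d$ on the polynomial ring $\C[\hom]\otimes\bigwedge\lgp^*$ by its values on generators and by a compatibility with $\pd=s^R$, and then verify that conditions (i), (ii) pin it down uniquely. Recall from Proposition \ref{4.3} that $\C[\hom]\otimes\bigwedge\lgp^*$ is an algebra of differential polynomials in the variables $\g_0$ and $\glau_1^*\cup\abelp^*$, with derivation $\pd=s^R$. Thus a derivation commuting with $\pd$ (condition (ii)) is, by Remark \ref{1.1} applied in the appropriate graded sense, uniquely determined by its restriction to those generators. So the first step is: \emph{show (i) together with (ii) determines $d$ on all generators}. Condition (i) gives $dE_\bt=d\Psi_k(e_\bt)$ for $e_\bt\in\g_0$. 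It remains to compute $d\vp^\al$ for $\vp^\al\in\glau_1^*\cup\abelp^*$ — but these are the lowest-degree one-forms, and since $d$ is the Chevalley--Eilenberg differential $d=\sum_{\al\in\afDp} e_\al^L\otimes\vp^\al$, we have $d\vp^\al = -\frac{1}{2}\sum_{\bt,\gam} c_{\bt,\gam}^\al\,\vp^\bt\wedge\vp^\gam$ (the structure constants of $\lgp$), which is forced and can be recovered from (ii) applied to the already-determined lower-degree data via Lemma \ref{4.2}. I would phrase this cleanly by observing that $d$ raises the form-degree by one, $d$ is odd, and the generators in form-degree $\le 1$ are exhausted by $\g_0\subset\C[\hom]$ and $\glau_1^*\cup\abelp^*$.

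Second, I would verify that the $d$ defined in the de Rham picture, $d=\sum_{\al\in\afDp}e_\al^L\otimes\vp^\al$, actually satisfies (i) and (ii). For (i): $dE_\bt=\sum_\al (e_\al^L E_\bt)\otimes\vp^\al$, and one computes $e_\al^L E_\bt$ exactly as in the proof of Theorem \ref{main2} (the ``$\ep$-linear term'' computation), giving $e_\al^L E_\bt = e_\al^L\cdot k(e_\bt|KsK^{-1}) = k(e_\bt|K[-e_\al,s]\,?\,\dots)$ — more precisely $e_\al^L E_\bt = -k([e_\bt,s]|e_\al)$ after moving everything to $\g$ via $(KsK^{-1}|\,\cdot\,)$ evaluated at the identity-type element, matching (i) up to the sign bookkeeping in \eqref{E}. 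For (ii): $\pd=s^R$ and $d=\sum e_\al^L\otimes\vp^\al$, so $[d,\pd]=\sum_\al [e_\al^L, s^R]\otimes\vp^\al + \sum_\al e_\al^L\otimes (s^R\vp^\al)$; plugging in \eqref{eq;5} for $[e_\al^L,s^R]$ (equivalently $[s^R,e_\al^L]$) and Lemma \ref{4.2} for $s^R\vp^\al$, the two sums cancel term-by-term because the structure constants appearing are the same (this is precisely the statement that $d$ is a chain map for the Chevalley--Eilenberg complex of $\lgp$ acting on $(\C[\hom],\xi^L)$, with differential induced by $s=s^R$ on coefficients). So $[d,\pd]=0$.

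Third, uniqueness: suppose $d'$ is another derivation of the differential superalgebra satisfying (i), (ii). Then $d-d'$ is an even-or-odd derivation vanishing on $\g_0$ and on $\glau_1^*\cup\abelp^*$ (on $\g_0$ by (i); on $\glau_1^*\cup\abelp^*$ because those are determined by (ii) from the lower data once we also know $d$ and $d'$ agree on $\pd$ of everything of strictly smaller degree — here one inducts on the $\wt$- or $\deg$-grading using that $s^R$ strictly raises degree and that $\ad_s^{-1}\Im(\ad_s)$ is generated over $\pd$ and multiplication by lower-degree pieces together with $\glau_1^*\cup\abelp^*$). A derivation commuting with $\pd$ and vanishing on a set of differential-polynomial generators is zero, so $d=d'$.

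The main obstacle will be the bookkeeping in the second step — checking $[d,\pd]=0$ honestly, i.e.\ that the contribution of $[e_\al^L,s^R]$ from Lemma \ref{3.1}/\eqref{eq;5} exactly cancels the contribution of $s^R\vp^\al$ from Lemma \ref{4.2}. The two contributions involve the same structure constants $c_{\bt,\gam}^\al$ and the same coordinate functions $E_{\bar\gam}$, but with indices transposed and with the signs coming from $d$ being odd and from the duality pairing $\vp^\al(e_\bt)=\delta_{\al,\bt}$; I expect this to be a short but sign-delicate computation, essentially identical in structure to the verification in the proof of Theorem \ref{main1} that $[Q_i^W,\pd]$ has the stated form. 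Once that cancellation is in hand, uniqueness is formal, so the content of the lemma is really this single compatibility identity.
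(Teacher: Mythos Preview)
Your verification of (i) and (ii) is correct and matches the paper exactly: (i) comes from computing $e_\al^L E_\bt=-k([e_\bt,s]|e_\al)$ directly from \eqref{E}, and (ii) comes from combining \eqref{eq;5} with Lemma~\ref{4.2} so that the two contributions to $[d,\pd]$ cancel.

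However, you have slightly misread the scope of the lemma, and this makes your uniqueness argument more complicated than necessary and introduces a small gap. The sentence immediately preceding the lemma says the de Rham differential $d=\sum_{\al}e_\al^L\otimes\vp^\al$ is being considered \emph{on $\C[\hom]$}, i.e.\ as a map $\C[\hom]\to\C[\hom]\otimes\lgp^*$, not as a derivation of the full exterior algebra. Since $\C[\hom]\cong V^k(\g_0)$ is an algebra of differential polynomials in the variables $E_\bt$ ($\bt\in I\sqcup\Delta_0$) alone, any derivation commuting with $\pd$ is already uniquely determined by (i); there is no need to discuss $d\vp^\al$. Your claim that $d\vp^\al$ for $\vp^\al\in\glau_1^*\cup\abelp^*$ is ``forced'' by (ii) from lower-degree data is not correct as stated: (i) and (ii) alone do not pin down $d$ on those odd generators, since $[d,\pd]$ applied to them only constrains $d(\pd\vp^\al)$, not $d\vp^\al$ itself. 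Drop that paragraph and the argument is both simpler and complete.
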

\begin{proof}
(1) follows immediately from the formula $e_\bt^L E_\al=-k([e_\al,s]|e_\bt)$ (see also \eqref{E}). (2) follows from \eqref{eq;5} and Lemma \ref{4.2}.
\end{proof}
Let $s^*\in \lgp^*$ denote the element corresponding to $s\in\lgn$ by the identification $\kappa=\inv: \lgn\cong \lgp^*$ and $\bar{s}\in \lgp$ the element corresponding to $s^*$.
\begin{proposition}
\sl
Under the isomorphism $\Psi_k: V^k(\g_0)\cong \C[\hom]$, 
$$d=-k\{s^*_\lam -\}|_{\lam=0},$$
holds on $\C[\hom]$.
\end{proposition}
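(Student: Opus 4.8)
The plan is to reduce the identity to the characterization of the de Rham differential given in Lemma~\ref{4.4}. Put $D=-k\{s^*_\lam-\}|_{\lam=0}$. First I would note that $s^*$ is an odd element of the Poisson vertex superalgebra $\C[\hom]\otimes\bigwedge\lgp^*$ of Proposition~\ref{4.3}, so $D$ is an odd superderivation of its underlying superalgebra (right Leibniz rule \eqref{rightLeibniz}, cf.\ Proposition~\ref{1.3}); moreover the brackets of Proposition~\ref{4.3} together with \eqref{eq;6}, and the vanishing of the bracket of two one-forms, force $D$ to map $\C[\hom]$ into $\C[\hom]\otimes\bigwedge^1\lgp^*$. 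Since via $\Psi_k$ the algebra $\C[\hom]\cong V^k(\g_0)$ is freely generated as a differential algebra by the $E_\bt$ with $\bt\in I\sqcup\Delta_0$, any superderivation $\C[\hom]\to\C[\hom]\otimes\bigwedge^1\lgp^*$ commuting with $\pd$ is determined by its values on the $E_\bt$. So by Lemma~\ref{4.4} it suffices to check (ii)~$[D,\pd]=0$ and (i)~$DE_\bt=dE_\bt$ for all $\bt\in I\sqcup\Delta_0$.

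Point (ii) is immediate: $\pd=s^R$ is the translation operator of the Poisson vertex structure of Proposition~\ref{4.3}, so the sesquilinearity axiom \eqref{sesqui} gives $\{s^*_\lam\pd F\}=(\lam+\pd)\{s^*_\lam F\}$, hence $\{s^*_\lam\pd F\}|_{\lam=0}=\pd\bigl(\{s^*_\lam F\}|_{\lam=0}\bigr)$ and therefore $D\pd F=\pd DF$ for every $F\in\C[\hom]$.

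For point (i) the plan is to unwind the $\lam$-bracket. Under $\Psi_k$ the generator $E_\bt$ corresponds to $e_\bt\in\g_0\subset V^k(\g_0)$. Expanding $s^*=\sum_{\gam\in\afDp}(s|e_\gam)\vp^\gam$, in which only the $e_\gam$ of degree one contribute since $\deg s=-1$, and evaluating $\{\vp^\gam_\lam e_\bt\}$ by applying skew-symmetry \eqref{skew} to \eqref{eq;6} (the result being $\lam$-independent), one obtains $\{s^*_\lam E_\bt\}|_{\lam=0}=\sum_{\al\in\afDp}(s|[e_\bt,e_\al])\vp^\al$. Using the invariance of $\inv$ to rewrite $(s|[e_\bt,e_\al])$ and multiplying by $-k$, this coincides with the formula for $dE_\bt$ in Lemma~\ref{4.4}(i). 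Together with (ii) and Lemma~\ref{4.4}, this yields $D=d$ on $\C[\hom]$, which is the assertion.

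The only delicate point is (i), and it is a matter of keeping signs consistent: the sign in the coadjoint action on $\lgp^*$ underlying the $\lam$-bracket of Proposition~\ref{4.3}, the sign in the identification $\inv\colon\lgn\cong\lgp^*$ that defines $s^*$, and the sign in Lemma~\ref{4.4}(i) must be tracked simultaneously; once that is done, the comparison is a one-line calculation. I do not expect any substantive obstacle beyond this bookkeeping, together with the routine verification that $\{s^*_\lam-\}$ does not leave wedge-degree one, which is what makes the comparison with the de Rham differential meaningful in the first place.
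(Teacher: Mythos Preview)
Your proposal is correct and follows essentially the same approach as the paper: reduce to the two conditions of Lemma~\ref{4.4}, verify (ii) via sesquilinearity \eqref{sesqui}, and verify (i) using the definition of $s^*$ together with the $\lam$-bracket in Proposition~\ref{4.3}\,(3). The paper's proof is terser but identical in substance; your more explicit bookkeeping of degrees and signs is a faithful expansion of it.
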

\begin{proof}
It suffices to show that $-k\{s^*_\lam -\}|_{\lam=0}$ satisfies (i), (ii)  in Lemma \ref{4.4}. (i) follows from the definition of $s^*$ and Proposition \ref{4.3}, (3). (ii) follows from \eqref{sesqui}.
\end{proof}
The following property of the derivation $\eta_a=\eta(\int H(a))\in\Der(\C[\hom])$, $(a\in\abeln)$, will be used.
\begin{lemma}
\label{4.5}
\sl
For $a\in\abeln$, there exist some polynomials $F_\al(a)\in\C[\hom]$, ($\al\in I \sqcup\Delta_0$), which satisfies
$$[u^L,\eta_a]=\sum_{\al\in I \sqcup\Delta_0} F_\al(a) [u, e_\al]^L,\quad u\in\glau_1.$$
\end{lemma}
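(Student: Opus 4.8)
The plan is to compute the commutator $[u^L, \eta_a]$ directly using the structural description of both derivations obtained earlier. Recall that $\eta_a = \eta(\int H(a))$ where, by Proposition \ref{4.1} and the construction preceding it, $H(a) \in V^k(\g_0)$ satisfies $d H(a) = s^R A$ for some $A = \sum_{\al\in\afDp} F_\al(a) \otimes \vp^\al \in \C[\hom]\otimes\lgp^*$. First I would observe that $\eta_a$, being the Hamiltonian derivation $\{H(a)_\lam -\}|_{\lam=0}$ on $V^k(\g_0)$, can be written (using Proposition \ref{1.3}(1) and the fact that $V^k(\g_0)$ is an algebra of differential polynomials in $\g_0$) as a derivation whose action on the generators $e_\bt$ is governed by the variational derivatives $\delta H(a)/\delta u_\al$, i.e.\ precisely by the coefficients $F_\al(a)$ appearing in $A$. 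Concretely, since $d = -k\{s^*_\lam-\}|_{\lam=0}$ and $d H(a) = \sum_\al e_\al^L H(a)\otimes\vp^\al$, the identity $dH(a)=s^R A$ pins down $e_\al^L H(a)$ in terms of the $F_\al(a)$, and hence pins down $\eta_a$ on $\C[\hom]$.

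Next I would expand $[u^L,\eta_a]$ for $u\in\glau_1$. Since $u^L$ is a left-invariant vector field and $\eta_a$ is a Poisson-vertex Hamiltonian derivation, the commutator is again a derivation of $\C[\hom]$, so it suffices to determine it on the generators $E_\bt$, $(\bt\in I\sqcup\Delta_0)$. Using $u^L E_\bt = -k([e_\bt,s]\mid u)$ (the computation from \eqref{E} and the proof of Theorem \ref{main2}) together with the explicit form of $\eta_a$, one gets that $\eta_a u^L E_\bt$ and $u^L \eta_a E_\bt$ each decompose into terms indexed by $I\sqcup\Delta_0$. The key cancellation, which should be a manifestation of the Jacobi identity for the coadjoint $\g_0$-action on $\lgp^*$ (compare \eqref{Jacobicheck} in the proof of Proposition \ref{4.3}), is that the difference collapses to $\sum_{\al\in I\sqcup\Delta_0} F_\al(a)\,[u,e_\al]^L E_\bt$, with the very same polynomials $F_\al(a)$ that appear in $A$. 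An alternative, perhaps cleaner, route is to use the closedness condition $d H(a) = s^R A$ directly: apply $u^L$ to both sides and commute $u^L$ past $s^R$ via Lemma \ref{3.1} (as in \eqref{eq;5}), which expresses $[u^L,s^R]$ as a sum $-\frac{1}{k}\sum_\bt E_{\bar\bt}[u,e_\bt]^L - [u,s]_+^L$; matching the $\vp^\al$-components on both sides then yields the claimed formula for $[u^L,\eta_a]$ with $F_\al(a)$ the same coefficients, after restricting to $u\in\glau_1$ so that $[u,s]_+$ lands back in a controlled range of degrees.

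The main obstacle I anticipate is bookkeeping the interaction between the two gradings: $u\in\glau_1$ means $[u,e_\al]$ for $e_\al\in\g_0$ again lies in $\glau_1$ (so $[u,e_\al]^L$ is a bona fide left-invariant field in the relevant range), but one must check that the extra term $[u,s]_+^L$ from Lemma \ref{3.1} does not spoil the identity — here the hypothesis $u\in\glau_1$ (as opposed to general $\glau_{>0}$) is presumably exactly what is needed, since $s$ has components in degrees $0$ and $d+1$ under $\Gamma$, and the degree count forces the potentially obstructing contributions to cancel against the de Rham differential structure. Verifying this degree-matching carefully, and confirming that the polynomials $F_\al(a)$ produced are independent of the auxiliary choices (the lift $\tilde f$ with $\tilde f(0)=0$), is the delicate part; the rest is the routine derivation-on-generators computation already rehearsed in the proofs of Theorem \ref{main2} and Proposition \ref{4.3}.
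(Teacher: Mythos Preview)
Your proposal has a genuine gap in the ``checking on generators'' step. You claim that since $[u^L,\eta_a]$ is a derivation of $\C[\hom]$, it suffices to determine it on the generators $E_\bt$, $\bt\in I\sqcup\Delta_0$. But $\C[\hom]\cong V^k(\g_0)$ is a polynomial ring in the variables $\pd^n E_\bt$ for \emph{all} $n\geq0$, and neither $[u^L,\eta_a]$ nor the right-hand side $\sum_\al F_\al(a)[u,e_\al]^L$ commutes with $\pd=s^R$ (indeed $[u^L,s^R]$ is nonzero by \eqref{eq;5}). So agreement on $E_\bt$ alone does not force agreement as derivations. Your alternative route, applying $u^L$ to $dH(a)=s^R A$, is also unclear: $s^R$ acts nontrivially on the $\vp^\al$ (Lemma~\ref{4.2}), so without specifying how $u^L$ is extended to $\lgp^*$ you cannot compare the two sides, and in any case this relation constrains $e_\al^L H(a)$ rather than $[e_\al^L,\eta_a]$. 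You also conflate two families of coefficients: the $F_\al$ in $A$ are indexed by $\al\in\afDp$, whereas the $F_\al(a)$ in the lemma are indexed by $I\sqcup\Delta_0$ (and turn out to be the variational derivatives $\delta H(a)/\delta e_\al$).

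The paper's argument supplies the missing mechanism: it works in the Poisson vertex \emph{super}algebra $\C[\hom]\otimes\bigwedge\lgp^*$ of Proposition~\ref{4.3}, where both $d=-k\{s^*_\lam-\}|_{\lam=0}$ and $\eta_a=\{H(a)_\lam-\}|_{\lam=0}$ are Hamiltonian. The PVA Jacobi identity then gives $[d,\eta_a]=\{dH(a)_\lam-\}|_{\lam=0}$, which vanishes because $\int dH(a)=0$. Writing $d=\sum_\al e_\al^L\otimes\vp^\al$ and expanding the commutator yields $[e_\al^L,\eta_a]=\sum_\bt\langle e_\al\mid\eta_a(\vp^\bt)\rangle e_\bt^L$; the key point is that $\eta_a(\vp^\bt)$ can now be computed from the $\lambda$-bracket \eqref{eq;6}, giving $\eta_a(\vp^\bt)=-\sum_{\gam,\rho}c_{\gam,\rho}^\bt\,\vp^\rho\,\tfrac{\delta H(a)}{\delta e_\gam}$. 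Pairing with $e_\al\in\glau_1$ then produces exactly $\sum_\gam\tfrac{\delta H(a)}{\delta e_\gam}[e_\al,e_\gam]^L$. The extended PVA structure (in particular the action of $\eta_a$ on the odd generators $\vp^\al$) is what replaces your inductive check over all $\pd^n E_\bt$ by a single Jacobi computation.
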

\begin{proof}
By construction of $\int H(a)$, we have $d \int H(a)=\int dH(a)=0\in\Lie(\C[\hom]\otimes\bigwedge \lgp^*)$ and thus $\{dH(a)_\lam-\}|_{\lam=0}=0$.
By \eqref{Jacobi}, we have
\begin{align*}   
\{dH(a)_\lam -\}|_{\lam=0}
&=\Bigl[\{s^*_\lam-\}|_{\lam=0}, \{H(a)_\mu-\}|_{\mu=0}\Bigr]
=\Bigl[\sum_{\al\in\afDp} e_\al^L\otimes\vp^\al, \eta_a\Bigr]\\
&=\sum_{\al\in\afDp} \Bigl([e_\al^L, \eta_a]\otimes \vp^\al -e_\al^L\otimes\eta_a(\vp^\al)\Bigr).
\end{align*}
It follows from them that
$$[e_\al^L, \eta_a]=\sum_{\bt\in\afDp} \langle e_\al \mid \eta_a(\vp^\bt) \rangle e_\bt^L,$$
where $\langle-\mid- \rangle: \lgp\times\lgp^*\rightarrow\C$ denotes the canonical pairing.
By Proposition \ref{4.3}, we have
$$\eta_a(\vp^\bt)= \{ H(a)_\lam \vp^\bt\}|_{\lam=0}=\sum\limits_{\gam\in I\sqcup \Delta_0} \{ e_{\gam\pd} \vp^\bt\}_\rightarrow \frac{\delta H(a)}{\delta e_\gam}
=-\sum\limits_{\begin{subarray}{c}\gam\in I\sqcup \Delta_0\\ \rho\in \afDp\end{subarray}} c_{\gam,\rho}^\bt\vp^\rho \frac{\delta H(a)}{\delta e_\gam}.$$
Hence, for $\al \in \glau_1$ we obtain 
\begin{align*}
[e_\al^L, \eta_a]&=\sum_{\bt\in \afDp} \langle e_\al \mid \eta_a(\vp^\bt)\rangle e_\bt^L
=-\sum\limits_{\bt\in \afDp}\sum\limits_{\begin{subarray}{c}\gam\in I\sqcup \Delta_0\\ \rho\in \afDp\end{subarray}}
c_{\gam,\rho}^\bt \frac{\delta H(a)}{\delta e_\gam}
\langle e_\al \mid \vp^\rho\rangle e_\bt^L\\
&=\sum\limits_{\gam\in I\sqcup \Delta_0} \frac{\delta H(a)}{\delta e_\gam} [e_\al, e_\gam]^L.
\end{align*}
This completes the proof.
\end{proof}
\subsection{Integrable Hamiltonian hierarchy} 
Let $\Vect(\lGp)$ denote the Lie algebra of the algebraic vector fields on $\lGp$. Recall the infinitesimal $\lgp$-actions \eqref{leftaction}, \eqref{rightaction}.
\begin{lemma}
\label{4.6}
\sl
We have $\Vect(\lGp)^{\lgp^L} =\lgp^R$ and $\Vect(\lGp)^{\lgp^R} =\lgp^L$.
\end{lemma}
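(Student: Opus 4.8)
The plan is to reduce the two equalities to one and then establish that one by a frame argument. The inversion $\iota\colon\lGp\to\lGp$, $g\mapsto g^{-1}$, is an automorphism of schemes which intertwines the left and right translation actions of $\lGp$ on itself: the flow of $u^L$ is left multiplication by $\exp(-tu)$ while that of $u^R$ is right multiplication by $\exp(tu)$, so conjugation by the pullback $\iota^{*}$ on $\Der(\C[\lGp])$ sends $u^L$ to $u^R$ for every $u\in\lgp$. Since $\iota$ is an involution, this conjugation exchanges $\lgp^L$ with $\lgp^R$, hence the two asserted identities; so it suffices to prove $\Vect(\lGp)^{\lgp^L}=\lgp^R$.

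The inclusion $\lgp^R\subset\Vect(\lGp)^{\lgp^L}$ I would read off Lemma \ref{3.1}: for $u,\psi\in\lgp$ it gives $[u^L,\psi^R]=[u,(K\psi K^{-1})_-]_+^L$, and since $\operatorname{Ad}(\lGp)$ preserves the subalgebra $\lgp$ one has $K\psi K^{-1}\in\lgp$, so $(K\psi K^{-1})_-=0$ and $[u^L,\psi^R]=0$.

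For the reverse inclusion I would use that, $\lGp$ being prounipotent, $\exp\colon\lgp\to\lGp$ is an isomorphism of schemes and the left-invariant vector fields $\{e_\al^R\}_{\al\in\afDp}$ form a frame for $\Vect(\lGp)$: every $V\in\Vect(\lGp)$ has a unique expansion $V=\sum_{\al\in\afDp}F_\al\,e_\al^R$ with $F_\al\in\C[\lGp]$. If $V\in\Vect(\lGp)^{\lgp^L}$ then for each $u\in\lgp$,
\[
0=[u^L,V]=\sum_{\al\in\afDp}(u^L F_\al)\,e_\al^R+\sum_{\al\in\afDp}F_\al\,[u^L,e_\al^R]=\sum_{\al\in\afDp}(u^L F_\al)\,e_\al^R,
\]
the last bracket vanishing by the previous paragraph; uniqueness of the frame expansion forces $u^L F_\al=0$ for all $u\in\lgp$ and $\al\in\afDp$, so each $F_\al$ lies in $\C[\lGp]^{\lgp^L}$ and we are reduced to showing $\C[\lGp]^{\lgp^L}=\C$.

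Finally, $\C[\lGp]^{\lgp^L}=\C$ is the standard fact that a regular function on the pro-connected group $\lGp$ invariant under its simply transitive left translation action is constant. I would verify it in the coordinates $\C[\lGp]=\C[z_\al\mid\al\in\afDp]$ coming from $\exp$: the Campbell--Hausdorff formula gives $e_\bt^L z_\al=-\delta_{\al,\bt}+(\text{a polynomial in the }z_\gam\text{ of degree}\geq1)$, so if $F$ has no constant term and $F_{[n_0]}$ is its lowest nonzero homogeneous component in the $z_\gam$ (of degree $n_0\geq1$), then the degree $n_0-1$ component of $e_\bt^L F$ equals $-\pd_{z_\bt}F_{[n_0]}$; requiring $e_\bt^L F=0$ for all $\bt\in\afDp$ then forces $F_{[n_0]}$ to be constant, a contradiction, so $F\in\C$. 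The only point needing care is the pro-finite-dimensional bookkeeping — that $\{e_\al^R\}$ is genuinely a free frame for $\Vect(\lGp)$ and that the degree argument is legitimate there — but I expect this to be routine, handled either by the explicit coordinates above or by passing to the truncations $\gpoly_{>0}/\gpoly_{>n}$ and taking limits.
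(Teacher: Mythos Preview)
Your argument is correct and follows the same overall strategy as the paper: use that $\lgp^L$ and $\lgp^R$ commute, trivialize $\Vect(\lGp)$ by the frame $\{e_\al^R\}$ (the paper writes this as $\Vect(\lGp)\cong\C[\lGp]\otimes_\C\lgp^R$), and reduce to the statement $\C[\lGp]^{\lgp^L}=\C$. The two differences are cosmetic. First, you invoke the inversion involution to deduce the second equality from the first, whereas the paper simply says ``proved similarly''. Second, for $\C[\lGp]^{\lgp^L}=\C$ you run a lowest-degree argument via Baker--Campbell--Hausdorff, while the paper uses the nondegenerate pairing $\mathcal U(\lgp)\times\C[\lGp]\to\C$, $(X_1\cdots X_n,F)\mapsto (X_1^L\cdots X_n^L F)(e)$, to identify $\C[\lGp]^{\lgp^L}$ with $(\mathcal U(\lgp)/\lgp\,\mathcal U(\lgp))^*\cong\C$. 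Both are standard; your version is more hands-on, the paper's is slightly slicker and makes the pro-finite bookkeeping you flag essentially invisible.
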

\begin{proof}
Although this is well-know, we give a proof for the completeness of the paper.
The pairing 
$$\U(\lgp) \times \C[\lGp]\rightarrow \C,\quad (X_1X_2\cdots X_n, F)\mapsto (X_1^LX_2^L\cdots X_n^LF)(e),$$ defined by
is $\lgp$-invariant and nondegenerate. Here $\U(\lgp)$ denotes the universal enveloping algebra of $\lgp$. Hence we have $\C[\lGp]^{\lgp^L} \cong \U(\lgp)/\lgp\U(\lgp) \cong \C$. Since $\lgp^L$ commutes with $\lgp^R$ and $\Vect(\lGp)\cong \C[\lGp]\otimes_{\C}\lgp^R$, we obtain
$$\Vect(\lGp)^{\lgp^L}\cong (\C[\lGp]\otimes_{\C} \lgp^R)^{\lgp^L} = \C[\lGp]^{\lgp^L}\otimes_{\C} \lgp^R\cong \lgp^R.$$
The latter claim is proved similarly.
\end{proof}
We say that an element $X\in\Vect(\lGp)$ satisfies (P) if  
$$[u^L,X]\in\sum_{\al\in I \sqcup\Delta_0}\C[\lGp][u, e_\al]^L,\quad u\in\glau_1.$$ 
Let $\liep$ denote the set of elements in $\Vect(\lGp)$ satisfying (P). It is straightforward to show that $\liep\subset \Vect(\lGp)$ form a Lie subalgebra.
The Witt algebra $\witt=\C((t)) \frac{\pd}{\pd t}$ acts on $\glau$ by derivations with respect to $t$. Since the subalgebra $\witt_-=\C[t^{-1}]t\frac{\pd}{\pd t}$ preserves $\glau_{\leq0}$, it acts on $B\backslash G((t))$ infinitesimally and therefore on the open dense subset $\lGp$. The induced $\witt_-$-action on $\C[\lGp]$ is given by
$$L_n \exp\left(\sum_{\al\in\afDp} z_\al e_\al\right)= \text{the}\ \ep\text{-linear term of}\ \exp\left(\sum_{\al\in\afDp} z_\al e_\al\right)e^{\ep L_n},$$
where $L_n=-t^{n+1}\frac{\pd}{\pd t}\in \witt_-$.
\begin{lemma}
\sl
We have $\liep=\witt_-\ltimes \glau^R$.
\end{lemma}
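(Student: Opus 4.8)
The plan is to compute the space $\Vect(\lGp)^{\lgp^R}$ of $\lgp^R$-invariant vector fields (by Lemma \ref{4.6} this is $\lgp^L$) twisted by the property (P), and to identify the answer with $\witt_-\ltimes\glau^R$. First I would establish the inclusion $\witt_-\ltimes\glau^R\subset\liep$. For the $\glau^R$ part, Lemma \ref{3.1} gives $[u^L,v^R]=[u,(KvK^{-1})_-]_+^L$ for $u\in\lgp$ and $v\in\glau$; since $(KvK^{-1})_-\in\glau_{\le0}$, for $u\in\glau_1$ the bracket $[u,(KvK^{-1})_-]$ lands in $\glau_{\ge0}$, and taking the $+$ part and expanding $(KvK^{-1})_-$ in the basis $e_\bt$, $(\bt\in I\sqcup\Delta_0\sqcup\afDn)$, shows the coefficient of each $[u,e_\al]^L$ with $\al\in I\sqcup\Delta_0$ is in $\C[\lGp]$ while the contributions from $\afDn$ vanish by degree reasons (the grading forces $[u,e_\al]\in\glau_{\le0}$, so its $+$-part is zero). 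Hence $\glau^R\subset\liep$. For $\witt_-$, I would compute $[u^L,L_n]$ for $L_n=-t^{n+1}\pd/\pd t$: the $\witt_-$-action differentiates the group coordinates, and a dual-number computation analogous to the one in Lemma \ref{3.1} expresses $[u^L,L_n]$ as $(L_n u)^L$ plus a $\C[\lGp]$-linear combination of terms $[u',e_?]^L$; one checks $L_n u\in\glau_1$ whenever $u\in\glau_1$ (since $L_n$ raises $t$-degree), so property (P) is preserved, whence $\witt_-\subset\liep$. Since $\liep$ is a Lie subalgebra (already noted) and $[\witt_-,\glau^R]\subset\glau^R$, the semidirect product $\witt_-\ltimes\glau^R$ lies in $\liep$.

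For the reverse inclusion, take $X\in\liep$. Using Lemma \ref{4.6} I cannot directly conclude; instead I would argue as follows. Consider the leading behaviour of $X$ under property (P): the condition $[u^L,X]\in\sum_{\al\in I\sqcup\Delta_0}\C[\lGp][u,e_\al]^L$ for all $u\in\glau_1$ constrains $X$ modulo $\glau^R$. Write $X=\sum_{\al\in\afDp}f_\al\,e_\al^R$ with $f_\al\in\C[\lGp]$ (using $\Vect(\lGp)\cong\C[\lGp]\otimes_\C\lgp^R$). The key point is that $\glau^L$ and $\glau^R$ together span, modulo $\C[\lGp]$-multiples, a controlled subspace: by Lemma \ref{3.1} the $e_\al^R$ for $\al\in\abelp$ and the images of $e_\al^L$ already account for a large part, and the property (P) kills the remaining directions except for the one extra dimension per $t$-degree coming from the Witt field. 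Concretely, I would use the identification $H^*(\lgp;\C[\hom])\cong\bigwedge\abelp^*$ from the previous subsection together with Lemma \ref{3.1} to see that the $\lgp^L$-invariants among the (P)-fields are exactly $\abelp^R$, hence the general (P)-field is, modulo $\glau^R$, a derivation acting through the $\g_0$-part, which by the $\witt_-$-action computation is spanned by the $L_n$. Counting: for each degree $n$, the fields satisfying (P) that are not in $\glau^R$ contribute exactly one new dimension, matched by $L_n$; this gives $\liep=\witt_-\ltimes\glau^R$.

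The main obstacle is the reverse inclusion, specifically showing that property (P) is restrictive enough: one must rule out "exotic" vector fields $X$ with $[u^L,X]\in\C[\lGp][u,e_\al]^L$ for all $u\in\glau_1$ that are neither in $\glau^R$ nor of Witt type. I expect the cleanest route is to pair $X$ against the right-invariant frame and use that $[u^L,\,\cdot\,]$ for $u$ ranging over $\glau_1$ detects almost all of $\lgp^R$ (since $\glau_1$ generates $\lgp$ as a Lie algebra under repeated brackets, by an analogue of the fact that $\Pi$ generates $\g_+$), so the condition $[u^L,X]\equiv 0$ modulo the allowed terms pins down $X$ up to the kernel, which is precisely $\glau^R$ plus the grading/scaling field; then the grading field and its $t$-translates assemble into $\witt_-$. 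I would carry out this last step by an explicit induction on $t$-degree, using the formula for the $\witt_-$-action on $\C[\lGp]$ displayed just before the statement.
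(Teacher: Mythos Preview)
Your forward inclusion $\witt_-\ltimes\glau^R\subset\liep$ is essentially correct and matches the paper, though your justification that ``$L_n u\in\glau_1$ whenever $u\in\glau_1$ since $L_n$ raises $t$-degree'' is wrong: for $n\le 0$ the operator $L_n=-t^{n+1}\pd/\pd t$ \emph{lowers} the grading by $(d+1)|n|$, so $L_n u$ does not stay in $\glau_1$. The correct computation, as the paper indicates, is a dual-number argument parallel to Lemma~\ref{3.1} yielding a formula of the shape $[u^L,L_n]=[u,(\text{something})_0]^L$, which lands in the span required by (P).

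The reverse inclusion, however, has a genuine gap. Your dimension count (``for each degree $n$, the fields satisfying (P) that are not in $\glau^R$ contribute exactly one new dimension'') is precisely the assertion to be proved, not an argument for it. The cohomology $H^*(\lgp;\C[\hom])\cong\bigwedge\abelp^*$ you invoke concerns functions on $\hom$, not vector fields on $\lGp$, and does not control $\liep/\glau^R$. Your final induction sketch gives no mechanism for showing that the only ``extra'' direction per degree is the Witt field; property (P) imposes conditions only for $u\in\glau_1$, and it is not at all obvious a priori that this is restrictive enough to exclude exotic $\C[\lGp]$-combinations of left-invariant fields.

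The paper's argument is of a completely different nature and relies on nontrivial input. One considers the quotient $M=\liep/(\witt_-\ltimes\glau^R)$ as a module over the affine Lie algebra $\afg$ (at level $0$) via $\xi^R$, and proves that $M$ lies in category~$\mathcal{O}$: diagonalizability of $\widetilde{\h}$ is clear, and local $\afn_+$-nilpotence is established by separate arguments for $\afn_{>0}$ (using that $\afn_{>0}^L$ commutes with $\afn_{>0}^R$ and the weight bound on the coefficient functions $F_\al$) and for $\g_0^+$ (an explicit inductive formula). If $M\neq0$ it has a highest weight vector, whose preimage gives a nontrivial extension class in $H^1(\afn_+;\witt_-\ltimes\glau)$. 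The paper then invokes the Feigin--Frenkel computation $H^i(\afn_+;\witt_-\ltimes\glau)\cong H^{i-1}(\afn_+;\C)\otimes\witt_{>0}$, which forces all classes in $H^1$ to have strictly positive degree with respect to $x+(d+1)L_0$; but the cocycle coming from (P) has nonpositive degree, a contradiction. This cohomological input is the crux, and nothing in your proposal substitutes for it.
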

\begin{proof}
By Lemma \ref{3.1}, we have $\glau^R\subset \liep$.  In particular, this implies that $\liep$ is a $\glau$-module.
The inclusion $\witt_-\subset \liep$ is shown in the same way as in the proof of Lemma \ref{3.1}.
It is obvious that these two actions give an action of their semidirect product $\witt_-\ltimes \glau^R\subset \liep$.

To show its equality, consider the quotient $\glau$-module $M:=\liep/\witt_- \ltimes \glau^R$. 
Let us show that $M$ belongs to the category $\mathcal{O}$ of the affine Lie algebra  $\afg=\gpoly \oplus \C C \oplus \C L_0$ at level 0. Let $\afg=\afn_+\oplus \afh \oplus \afn_-$ be the standard triangular decomposition of $\afg$.
Indeed, the action of the Cartan subalgebra $\widetilde{\h}=\h \oplus \C C \oplus L_{0}$ on $\Vect(\lGp)$ is diagonalizable and, moreover, preserves $\liep$ and $\witt_- \ltimes \glau^R$. Hence it is diagonalizable on $M=\liep/\witt_- \ltimes \glau^R$. 
Let us show that the dimension $\dim \U(\afn_+) w$ is finite for any  $w\in M$. Let $\tw\in\liep$ denote a lift of $w$. 
The decomposition $\afn_+=\g_0^+\ltimes\afn_{>0}$ where $\g_0^+=\bigoplus_{\al \in \Delta_{0+}} \g_{0\al}$ and $\afn_{>0}=\afn_+\cap \glau_{>0}$ induces a decomposition of the universal enveloping algebra $\U(\afn_+) \cong \U(\g_0^+)\otimes\U(\afn_{>0})$ as vector spaces. By definition of $\liep$, $\tw$ satisfies, for any $u\in\glau_1$, 
$$[u^L, w]= \sum\limits_{\al\in I\sqcup\Delta_0} F_\al(w) [e_\al, u]^L\ \text{for some element}\ F_\al(w)\in\C[\lGp].$$
Since $\afn_{>0}^L$ and $\afn_{>0}^R$ commute, for any $Z\in \U(\afn_{>0})$,
$$ [u^L,Z^R(w)]=Z^R([u^L,w])=\sum\limits_{\al \in I \sqcup\Delta_0} Z^R\bigl(F_\al (w)\bigr) [e_\al, u]^L.$$
If $Z^R(F_\al(w))=0$ for all $\al\in I \sqcup\Delta_0$ and $u\in \glau_1$, then by Lemma \ref{4.6}, we have $[Z^R, w]\in\afn_{>0}^R$, which is zero in $M$. 
Considering the $\widetilde{\h}$-weights of $Z^R(F_\al(w))$, we conclude that $\dim\U(\afn_{>0})^R(F_\al(w))<\infty$ for each $\al$ and hence $\dim \U(\afn_{>0})^R w<\infty$. 
Thus it remains to show $\dim \U(\g_0^+) w<\infty$. 
Take $u\in\afg_1$ and $v\in \g_0^+$, we have $[u^L, v^R]=[u, v]^L$ by Lemma \ref{3.1}. 
Then by setting $[u^L,w]=\sum_{\al\in I\sqcup \Delta_0}G_\al(w)[u,e_\al]^L$ for some element $G_\al\in\C[\lGp]$, we have
\begin{align*}
[u^L, [v^R, w]]&=[[u, v]^L, w]+ [v^R, [u^L, w]]\\
&=\sum\limits_
{\al\in I\sqcup \Delta_0} G_\al(w)[[u, v], e_\al]^L +\sum\limits_{\al\in I\sqcup \Delta_0} v^R(G_\al(w)) [u, e_\al]^L.
\end{align*}
By induction, we obtain
\begin{align*}
[u^L,[v_1^R,[v_2^R,\cdots,[v_n^R,w]\cdots]
=\sum_{\begin{subarray}{c}\al\in I\sqcup \Delta_0\\A\sqcup B=\{1,2,\cdots,n\}\end{subarray}} (v_A^R G_\al(w)) [[u,v_B],e_\al]^L
\end{align*}
for $v_1,v_2,\cdots,v_n\in \g_0^+$, where $v_A^R=v_{a_1}^Rv_{a_2}^R\cdots v_{a_{\mathrm{max}}}^R$ for $A=\{a_1,a_2,\cdots,a_{\mathrm{max}}\}$, $(a_1<a_2<\cdots <a_{\mathrm{max}})$, and $[u,v_B]=[\cdots[u,v_{b_1}],v_{b_2}],\cdots],v_{b_{\mathrm{max}}}]$ for $B=\{b_1,b_2,\cdots,$ $b_{\mathrm{max}}\}$, $(b_1<b_2<\cdots<b_{\mathrm{max}})$.  
Since $\ad_{\g_0^+}^N \glau_1=0$ and $((\g_0^+)^R)^N G_\al(w)=0$ for $N$ large enough, it follows that
$[u^L,[v_1^R,[v_2^R,\cdots,[v_n^R,w]\cdots]=0$ 
for $n$ sufficiently large and hence $[v_1^R,[v_2^R,\cdots,[v_n^R,w],\cdots]\in\afn_{>0}^R$, which is zero in $M$. 
Therefore, we obtain $\dim\U(\g_0^+)^R w<\infty$. Thus we conclude that $M$ belongs to the category $\mathcal{O}$.

Suppose $M\neq0$. Then it contains a nonzero highest weight vector $w$. Let $\tw\in\liep$ denote a lift of $w$. Then $\witt_- \ltimes \glau^R + \C \tw$ is an extension of $\afn$-module $\witt_- \ltimes \glau^R$ by a trivial 1-dimensional $\afn$-module $\C w$, which is nontrivial by construction.
Thus this extension defines a nonzero element in $H^1(\afn; \witt_- \ltimes \glau^R)$. Since the action $\glau^R$ on $\C[\lGp]$ is faithful, $\glau^R\cong \glau$ as $\afn$-modules. 
On the other hand, the cohomology $H^n(\afn; \witt_- \ltimes \glau^R)$ is describes as 
$$H^i(\afn;  \witt_- \ltimes \glau) \cong H^{i-1}(\afn; \C) \otimes \witt_{>0},$$
where $\witt_{>0}=\C[[t]]t^2 \frac{\pd}{\pd t}$ (\cite{feiginfrenkel95}). This isomorphism is $\widetilde{\h}$-equivariant. In particular, all the degrees of the homogeneous elements with respect to the action of $x+(d+1) L_0$ in the first cohomology $H^1(\afn;  \witt_- \ltimes \glau) \cong \witt_{>0}$ are positive. (See Section 1 for the definition of $x$ and $d$.)
On the other hand, the cocycle corresponding to $\C w$ is negative due to the definition of $\liep$, which is a contradiction. Therefore, we conclude $M=0$. This completes the proof 
\end{proof}
\begin{theorem}
\label{main3}

Suppose that $(\g,f)$ satisfies (F) and $k\in\C$ is generic. Then, $\mathcal{H}^k(\g,f)$ is an integrable Hamiltonian hierarchy associated with $\W^k(\g,f)$.
\end{theorem}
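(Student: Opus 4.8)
The plan is to establish two things: first, that $\mathcal{H}^k(\g,f)$ is genuinely infinite-dimensional and abelian inside $\Lie(\W^k(\g,f))$, and second, that it contains a nonzero element, so that every element of it is integrable in the sense defined at the end of Section 2.1. Abelianness is already known: $\abeln$ is abelian, so its image $\xi^R(\abeln)$ in $\Der(\C[\dcoset])$ is abelian, and by Proposition \ref{4.1} together with Theorem \ref{main2} this image is carried into $\Lie(\W^k(\g,f))$ via the identification $\C[\hom]\cong V^k(\g_0)$ and the injectivity statement of Lemma \ref{2.3}. What remains is to see that the assignment $a\mapsto \int H(a)$, i.e. $\abeln\to\Lie(\W^k(\g,f))$, is injective with infinite-dimensional image, so that $\mathcal{H}^k(\g,f)$ really is an infinite-dimensional abelian Lie subalgebra of $\Lie(\W^k(\g,f))$ containing (for instance) $\int H(s)$ as a distinguished nonzero Hamiltonian.

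First I would make precise the chain of identifications from the double complex $\mathscr{C}$. By the spectral-sequence computation preceding Proposition \ref{4.1}, $H^1(\mathscr{C})\cong\abelp^*$, and this cohomology is identified with $\Ker\bigl(d:\C[\hom]/(\C\oplus\Im(s^R))\to (\C[\hom]\otimes\lgp^*)/\Im(s^R)\bigr)$. Via $\Psi_k$ and Theorem \ref{main2} this kernel is $\{\int f\in\Lie(V^k(\g_0))\mid d\int f=0\}$ modulo constants, and since $V^k(\g_0)$ is an algebra of differential polynomials the constants inject, so the normalization $\tilde f(0)=0$ gives a genuine linear splitting. Thus $a\mapsto\int H(a)$ is, by construction, a linear isomorphism onto a subspace of $\Lie(V^k(\g_0))$, hence injective; and $\abeln=\abel\cap\glau_{\leq 0}$ is infinite-dimensional because $\abel$ is the completion of $\Ker(\ad_s)$, which contains $s,s^2,s^3,\dots$ (powers taken in $\g[t^{\pm 1}]$ with respect to the commutative Lie algebra structure of $\Ker(\ad_s)$ guaranteed by (F3)), and these lie in arbitrarily negative degrees, hence in $\glau_{\leq 0}$. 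By Proposition \ref{4.1} the whole image lies in $\Lie(\W^k(\g,f))$, so $\mathcal{H}^k(\g,f)=\eta(\int H(\abeln))$ is an infinite-dimensional abelian subalgebra of $\Der(\W^k(\g,f))$, and every $\eta_a$ in it is contained in such a hierarchy. The final point is that $\mathcal{H}^k(\g,f)$ as a subalgebra of $\Lie(\W^k(\g,f))$ rather than merely of $\Der(\W^k(\g,f))$ is abelian: this follows from Proposition \ref{1.3}(1)–(2), since the bracket on $\Lie(\W^k(\g,f))$ maps to the commutator bracket on derivations under $\eta$, and $\eta$ restricted to $\int H(\abeln)$ has kernel contained in $\mathrm{span}\{\int 1\}$ by Lemma \ref{1.4} applied to $V^k(\g_0)$ and the injectivity of $j_*$ in Lemma \ref{2.3}.

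The main obstacle I anticipate is not the abelianness or the counting, but verifying that the space $\mathcal{H}^k(\g,f)$ is \emph{faithfully} infinite-dimensional after passing to $\Lie(\W^k(\g,f))$ — in other words, that no nonzero $\int H(a)$ becomes a constant, equivalently that $\eta_a=0$ forces $a=0$. Here I would use Lemma \ref{4.6} and the structural Lemma \ref{4.5}: if $\eta_a$ vanished on $\C[\hom]$ then, because $\eta_a$ commutes with all $u^L$ for $u\in\g_0$ up to the $F_\al(a)[u,e_\al]^L$ correction terms of Lemma \ref{4.5}, and because $\Psi_k$ identifies $\C[\hom]$ with $V^k(\g_0)$ on which the Hamiltonian action has kernel only the constants (Lemma \ref{1.4}, $Z(\g_0)$ contributing nothing new after the screening-operator constraints via Lemma \ref{2.3}), one would force the corresponding cohomology class in $\abelp^*$ to vanish, hence $a=0$. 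The characterization $\liep=\witt_-\ltimes\glau^R$ from the last lemma is what guarantees there are no unexpected vector fields intertwining the $\g_0$-action that could cause collapse, so the argument closes. I would then simply remark that, since $\mathcal{H}^k(\g,f)$ is an infinite-dimensional abelian Lie subalgebra of $\Lie(\W^k(\g,f))$, it is by definition an integrable Hamiltonian hierarchy associated with $\W^k(\g,f)$, completing the proof.
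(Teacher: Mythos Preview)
Your proposal has a genuine gap: you assert the identity $\mathcal{H}^k(\g,f)=\eta(\int H(\abeln))$ without proof, and this identity is precisely the content of the theorem. Recall that $\mathcal{H}^k(\g,f)$ is \emph{defined} in Section~4.2 as the image of the geometric action $\xi^R:\abeln\to\Der(\C[\dcoset])$, whereas $\int H(\abeln)\subset\Lie(\W^k(\g,f))$ is a space of local functionals constructed cohomologically in Section~5.1. Proposition~\ref{4.1} shows only that each $\int H(a)$ lies in $\Lie(\W^k(\g,f))$; it says nothing about whether the associated Hamiltonian derivation $\eta_a=\eta(\int H(a))$ coincides with $a^R$, or even lies in $\abeln^R$. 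Without this, (i) you cannot conclude that the derivations in $\mathcal{H}^k(\g,f)$ are Hamiltonian at all, and (ii) your abelianness argument for $\int H(\abeln)$ in $\Lie(\W^k(\g,f))$ collapses: it requires knowing that $[\eta_a,\eta_b]=0$ in $\Der(V^k(\g_0))$, which you are tacitly importing from the commutativity of $\abeln^R$ via the very identification you have not established.

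The paper's proof is devoted almost entirely to closing this gap. One first shows $\eta_a\in\liep$ via Lemma~\ref{4.5}, then computes $[\bar s^R,\eta_a]=0$ by exploiting that the coefficients $F_\al(a)$ in Lemma~\ref{4.5} lie in $\C[\hom]$ and are therefore annihilated by $\bar s^R\in\abelp^R$; this places $\eta_a$ in $\liep^{\bar s^R}$, which the structural lemma $\liep=\witt_-\ltimes\glau^R$ together with a centralizer calculation identifies with $\abel^R$. A degree argument then forces $\eta_a\in\abeln^R$, and a further degree-matching gives $\eta(\int H(\abeln))=\abeln^R=\mathcal{H}^k(\g,f)$. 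You cite Lemmas~\ref{4.5}, \ref{4.6} and the $\liep$ lemma, but you deploy them only toward injectivity of $a\mapsto\eta_a$, not toward the inclusion $\eta_a\in\abeln^R$; that is the wrong direction. (A minor aside: the claim that $\abeln$ contains ``$s,s^2,s^3,\ldots$ with respect to the commutative Lie algebra structure'' is not meaningful---an abelian Lie algebra has no intrinsic powers. Infinite-dimensionality of $\abeln$ follows instead from the graded decomposition $\gpoly=\Ker(\ad_s)\oplus\Im(\ad_s)$.)
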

\begin{proof}
By abuse of notation, let $\eta_a\in \Der(V^k(\g_0))$ also denote the corresponding element in $\Vect(\hom)$ by $\Psi_k$ in Theorem \ref{main2}. Since $\lGp \cong (\hom) \times \Ap$ as affine schemes in an $\Ap^R$-equivariant way, the elements of $\Vect(\hom)$ naturally lift to elements of $\Vect(\lGp)$ which commute with $\abelp^R$. Again, let $\eta_a$ denote the lift of $\eta_a$ in $\Vect(\lGp)$. Then by Lemma \ref{4.5}, the lift $\eta_a$ lies in $\liep$ and, moreover, $[u^L,\eta_a]=\sum_{\al I\sqcup \Delta_0} F_\al(\eta_a)[u,e_\al]^L$ with $F_\al(\eta_a)\in\C[\hom]$ for $u\in\glau_1$. Since $\bar{s}\in \abelp$, we have, for $u\in\glau_1$,
\begin{align*}
[u^L, [\bar{s}^R, \eta_a]]&=[\bar{s}^R, [u^L, \eta_a]]=\bigl[\bar{s}^R, \sum_\al F_\al(\eta_a) [u, e_\al]^L\bigr]
                          = \sum_\al \bar{s}^R(F_\al(\eta_a)) [u, e_\al]^L\\
                          &=0.
\end{align*}
Since $\glau_1$ generates $\lgp$, we obtain $[\bar{s}^R, \eta_a]\in \Vect(\lGp)^{\lgp^L}=\lgp^R$.
The degree of $[\bar{s}^R, \eta_a]$ is non-positive and the degrees of the elements in $\lgp^R$ are positive with respect to the action of $x+(d+1) L_0$. This implies $[\bar{s}^R, \eta_a]=0$, i.e., $\eta_a\in \liep^{\bar{s}^R}$. 
On the other hand, we have $\liep^{\bar{s}^R}=\witt_-^{\bar{s}^R}\ltimes (\glau^{\bar{s}})^R$ since $[\bar{s}^R, \witt_-]\subset[\abel, \witt_-]^R \subset \abel^R$ and $[\bar{s}^R,\glau^R]\cap\abel^R=0$.
Since $\bar{s}\in \glau$ is not an element in $\g$, we have $\witt_-^{\bar{s}^R}=0$. From the definition of $\bar{s}$ (and $s$), we obtain $\glau^{\bar{s}^R}= \abel^R$. Thus $\liep^{\bar{s}^R}=\abel^R$ holds. Therefore,  $\eta_a$ lies in $\abel^R$, which means $\int H(\abeln)\subset \abel^R$. Considering the degree given by the action of $x+(d+1)L_0$, we see $\int H(\abeln)\subset \abeln^R$. By Lemma \ref{1.4}, it is easy to show that the map $\int$ on $ H(\abeln)$ is injective. Since the map $a\mapsto \int H(a)$ preserves the degree with respect to the action of $x+(d+1) L_0$, we obtain $\int H(\abeln)=\abeln^R$.
\end{proof}

\end{document}